\date{\today}
\newtheorem{theorem}{Theorem}[section]
\newtheorem{proposition}{Proposition}[section]
\newtheorem{definition}{Definition}[section]
\newtheorem{corollary}{Corollary}[section]
\theoremstyle{definition}
\newtheorem{remark}{Remark}[section]
\DeclareMathOperator{\sinc}{sinc}
\DeclareMathOperator{\Vol}{Vol}
\DeclareMathOperator{\supp}{supp}
\DeclareMathOperator{\WF}{WF}
\DeclareMathOperator{\WFH}{WF_{\it h}}
\newcommand{\eps}{\varepsilon}
\renewcommand{\sc}{semi\-classical}
\newcommand{\R}{{\bf R}}
\DeclareMathOperator{\Id}{Id}
\renewcommand{\r}[1]{(\ref{#1})}
\renewcommand{\Xi}{B}
\newcommand{\PDO}{$\Psi$DO}
\newcommand{\HPDO}{$h$-$\Psi$DO}
\newcommand{\be}[1]{\begin{equation}\label{#1}}
\newcommand{\ee}{\end{equation}}
\renewcommand{\d}{\mathrm{d}}
\renewcommand{\i}{\mathrm{i}}
\newcommand{\bo}{\partial \Omega}
\newcommand{\B}{B}
\title[Semiclassical Sampling]{Semiclassical Sampling and discretization of certain linear inverse problems}
\author[Plamen Stefanov]{Plamen Stefanov}
\address{Department of Mathematics, Purdue University, West Lafayette, IN 47907}
\thanks{Partially supported by the National Science Foundation under
grant DMS-1600327.}
\begin{document}
\begin{abstract}
We study sampling of Fourier Integral Operators $A$ at rates $sh$ with $s$ fixed and $h$ a small parameter. We show that the Nyquist sampling limit of $Af$ and $f$ are related by the canonical relation of $A$ using semiclassical analysis. We apply this analysis to the Radon transform in the parallel and the fan-beam coordinates. We explain and illustrate the optimal sampling rates for $Af$, the aliasing artifacts, and the effect of averaging (blurring) the data $Af$. We prove a Weyl type of estimate on the minimal number of sampling points to recover $f$ stably in terms of the volume of its \sc\ wave front set.
\end{abstract} 
\maketitle

\section{Introduction}  
The classical Nyquist--Shannon sampling theorem says that a function $f\in L^2(\R^n)$ with a Fourier Transform $\hat f$ supported in the box $[-\Xi, \Xi]^n$ can be uniquely and stably recovered from its samples $f(sk)$, $k\in\mathbf{Z}^n$ as long as $0<s\le \pi/\Xi$. More precisely, we have
\be{1}
f(x) = \sum_{k\in\mathbf{Z}} f(sk)\chi\Big(\frac{\pi}{s}(x-sk)\Big), \quad \chi(x):= \prod_{j=1}^n\sinc(x^j)
\ee
and 
\be{2}
\|f\|^2 = s^n\sum_{k\in\mathbf{Z}} |f(sk)|^2,
\ee
where $\|\cdot\|$ is the $L^2$ norm, see, e.g., \cite{Natterer-book} or \cite{Epstein-book}. 
If $s<\pi/\Xi$ (strictly) then we have oversampling and one can replace the $\sinc$ function in \r{1} by   a faster decaying one, see Theorem~\ref{thm_sampling} below. 
For practical purposes, there are two major inconveniences: we need infinitely many samples and $f$ has to be real analytic, and in particular,  it cannot be compactly supported unless it is zero. The stability \r{2} allows us to resolve those difficulties by using approximate recovery for approximately band limited functions. Let us say that $f$ is ``essentially supported'' in some box $[-R,R]^n$ in the sense that $f=f_0+f_1$ with $\|f_1\|\le \eps_1\ll1$ and $f_0$ being ``essentially $\Xi$-band limited'' in the sense that the $L^2$ norm of $\hat f$ outside that frequency box is bounded by some $0<\eps_2\ll1$. Then  \r{1} recovers $f_0$ up to an error small with $\eps_2$, see  \cite{Natterer-book}, by sampling $f_0$ (not $f$). The effect of replacing $f$ by $f_0$ can be estimated in terms of $\eps_1$ as well. 
 
There are  generalizations of the sampling theorem to non-rectangular but still periodic grids, see, e.g., \cite{PetersenM} or to some non-uniform ones, see, e.g., \cite{Landau-sampling} but the latter theory is not as complete when $n\ge2$. The version presented above is equivalent to viewing $\R^n$ as a product of $n$ copies of $\R$. In particular, it is invariant under translations and dilations and has a natural extension to actions of linear transformations. On the other hand, the conditions are sharp both for uniqueness and for stability. If the sampling rate (Nyquist) condition is violated, there is non-uniqueness and if we still use \r{1}, we get aliasing. There are also versions for $f$  belonging to spaces different than $L^2$. The proof of the sampling theorem is equivalent to thinking about $f$ as the inverse Fourier transform of $\hat f$, the latter compactly supported. Therefore the samples $f(sk)$ are essentially the Fourier coefficients of $\hat f$ extended as a $2\pi/s$ periodic function in each variable (which also explains the Nyquist limit condition), see Theorem~\ref{thm_FIO}.

The purpose of this work is to study the effect of sampling the data at a certain rate for a class of  linear inverse problems. This class consists of problems of inverting a Fourier Integral Operator (FIO): find $f$ if
\be{3}
Af=m
\ee
with $m$  given (so far, noiseless) and $A$ is an FIO of a certain class. There are many examples: inversion of the Euclidean X-ray and the Radon transforms, for which the sampling problem is well studied, see, e.g., the references in \cite[Ch.~III]{Natterer-book}; inversion of the geodesic X-ray transform and more general Radon transforms; thermo and photo-acoustic tomography with a possibly variable speed, etc. A large class of integral geometry operators are in fact FIOs, as first noticed by Guillemin \cite{Guillemin85,GuilleminS}. The solving operators of hyperbolic problems are also FIOs in general. 
On the other hand, many non-linear inverse problems have a linearization of this kind, like the boundary rigidity problem or various problems of recovery coefficients in a hyperbolic equation from boundary measurements. 
 
 We study the following types of questions.
%
 
\textbf{(i) Sampling $Af$:} Given an essential frequency bound of $f$ (the lowest possible ``detail''), how fine should we sample the data $Af$ for an accurate enough recovery? This question, posed that way, includes the problem of inverting $A$ in the first place, in addition to worrying about sampling. The answer is specific to $A$ which could be associated to a canonical graph or not, elliptic or not, injective or not. 
 Then a reformulation of the first question is --- 
if $f$ is approximately band limited, is also $Af$ approximately band limited, with what limit, and then  what sampling rate will recover reliably $Af$? The problem of recovery of $f$ after that depends on the specific $A$. 

\textbf{(ii) Resolution limit on $f$ given the sampling rate of $Af$.} 
Suppose we have fixed the sampling rate of $Af$ (not necessarily uniformly sampled). In applications, we may not be able to sample too densely. What limit does this pose on  the smallest detail of $f$ we can recover? The answer may depend on the location and on the direction of those details. 

\textbf{(iii) Aliasing.} Above, if $f$ has detail smaller than that limit, there will be aliasing. How will the aliasing artifacts look like? Aliasing is well understood in classical sampling theory but the question here  is what kind of artifacts an aliased $Af$  would create in the reconstructed $f$.

\textbf{(iv) Averaged measurements/anti-aliasing.} Assume  we cannot sample $Af$ densely enough or assume that  $f$ is not even approximately band limited. 
Then the data would be  undersampled. The next practical question is --- can we blur the data \textit{before we sample} to avoid aliasing and then view this as an essentially properly sampled problem but for a blurred version of $f$? This is a standard technique in imaging and in signal processing  but here we want to relate to reconstruction of $f$ to the blurring of the data $Af$. 
In X-ray tomography, for example, this would mean replacing the X-ray with thin cylindrical packets of rays and/or using detectors which could average over small neighborhoods and possibly overlap. One could also vibrate the sample during the scan.  In thermo and photoacoustic tomography, one can take detectors which average over some small areas. The physical  detectors actually do exactly that in order to collect good signal which brings us to another point of view --- physical measurements are actually already averaged and we want to understand what this does to the reconstructed $f$.

To answer the sampling   question (i), one may try to estimate the essential support (the ``band'') of $\widehat{Af}$ given that of $\hat f$ and then apply some of the known sampling theorems.  This is a possible approach for each particular problem but will also require the coefficients of $A$, roughly speaking, to be also band limited, and the band limit of $Af$ would depend of that of $f$ and on $A$. The  operators of interest have singular Schwartz kernels however. Also, it may not be easy to get sharp constants.  One might prove that if $ f$ is approximately $\Xi$ band limited, then $Af$ is, say, $C\Xi$ approximately band limited. The success of this approach would depend heavily on having a sharp constant $C$. Proving that there exists some $C>0$ even with some rough estimate on it would not be helpful because the required sample step $s$ would have to be scaled as $s/C$. In some symmetric cases, such direct approach can and has been done, for example for the Euclidean X-ray/Radon transform, see the references in Section~\ref{sec_R_PG}. Our main interests however is in inverse problems without symmetries (coming from differential equations with variable coefficients, for example). Note that one may try to use interpolation by various functions, like splines, for example but the same problem exists there since the bounds of the error depend on a priori bounds of some higher order derivatives, and the constants in those estimates matter. 
 
To overcome this difficulty, we look at the problem as an asymptotic one. We think of the highest frequency of $f$ (in some approximate sense) as a large parameter 
and we are interested in the optimal sampling rate for $Af$ when that upper bound gets higher and higher; which would force the sampling step $s>0$ to get smaller and smaller. To model that, we rescale 
the dual variable $\xi$ to  $\xi/h$, where $0<h\ll1$ is a small parameter; which would rescale the sampling rate to $sh$. Then we assume that $|\xi|$ is bounded by some constant $B$ which we call a \sc\ band limit of $f$. 
We think of $f$ as a family, depending on $h$. The natural machinery for this is the semi-classical calculus. The frequency content of $f$ locally is described by its semi-classical wave front set $\WFH(f)$: if the latter consists (essentially) of $(x,\xi)$ with $|\xi|\le \Xi$ (then we say that $f$ is \sc ly $B$-band limited), then $\hat f$ is essentially supported in $\Xi/h$. In classical terms, this means $|\hat f(\xi)|=O(h^\infty)$ for $|\xi|\ge B/h$, i.e., $\hat f$ is essentially supported in $|\xi|\le B/h$ as $h\ll1$. One can   think of $B/h$ as the upper bound of $|\xi|$ for $\xi\in\supp \hat f$ (up to a small error). One can also handle  $O(h)$ errors instead of $O(h^\infty)$ by replacing $\WFH$ with is \sc\ Sobolev version.
 If $A$ is a \sc\ FIO (h-FIO), then $\WFH(f)$ is mapped to $\WFH(Af)$ by the canonical relation of $A$. This is also true, away from the zero frequencies, if $A$ is a classical FIO. That property is sharp when $A$ is elliptic (which happens for most stable problems). Therefore, we can estimate sharply $\WFH(Af)$ and apply an appropriate sampling theorem. The canonical relation is typically described by some properties of the geometry of the problem, as we will demonstrate on some examples. 

Knowing $\WFH(Af)$ for all $f$ \sc ly B-band limited $f$ determines the sampling rate for $Af$. Indeed, let $\Sigma_h(Af)$ be the \sc\ frequency set of $Af$ defined as the projection of $\WFH(Af)$ onto the dual variable. Then an upper bound of the size, or even the shape of $\Sigma_h(f)$ determines a  sharp sampling rate for $Af$, see Theorem~\ref{thm_sc}. Since $\WFH(Af)$ is $2n$-dimensional and $\Sigma_h(Af)$ is $n$-dimensional, the latter discards useful information about the $x$-localization of $\WFH(Af)$. Our analysis allows to formulate results about non-uniform but still a union of locally uniform sampling lattices allowing us to use coarser sampling where the frequencies cannot reach their global maximum. We demonstrate how this works for the Radon transform. Note that this non-uniform sampling is not a consequence of a priori assumptions on the localization of $\WFH(f)$ (although, if we have such assumptions, we can do further reductions) --- it depends on the intrinsic geometry of the problem i.e., on the Lagrangian of $A$. 

To answer (ii), assuming that we sample $Af$ at a rate requiring, say $\Sigma_h(Af)$ to be supported in $\{\eta;\; |\eta_j|\le B, \,\forall j\}$, we need to map this box back by the inverse of the canonical relation of $A$. 

The aliasing question (iii) admits a neat characterization. Aliasing is well understood in principle as frequencies $\xi$ shifting (or ``folding'') in the Fourier domain. This can be regarded as a h-FIO, call it $S$, in our setting. If $A$ is associated with a local  canonical diffeomorphism, then inverting $A$ with aliased measurements results in $A^{-1}SA$, which is an h-FIO with a canonical relation a composition of the three. While classical aliasing shifts  frequencies but preserves the space localization (see, e.g., Figure~\ref{fig_aliasing}); we get a new effect here: the ``inversion'' $A^{-1}SA$ does not preserve the space localization and can shift parts of the image, see Figure~\ref{fig_R_aliasing2}. 
 
The averaged measurements/anti-aliasing problem (iv) can be resolved as follows. If the a priori estimate of the size of $\WFH(f)$ is too high (or infinite) for our sampling rate, we can blur the data before sampling, i.e., apply an anti-aliasing filter, which is routinely done in signal and image processing. To do this, take some $\phi\in C^\infty$ decaying fast enough, set $\phi_h = h^{-m}\phi(\cdot/h)$ (here $m$ is the dimension where we collect the measurements, and $h^{-m}$ is a normalization factor), and consider $\phi_h *Af$. That convolution can be seen to be a \sc\ \PDO\ (a Fourier multiplier) with a \sc\ symbol $\hat\phi$. 
One can also use a more general \sc\ \PDO. By Egorov's theorem, $\phi_h *Af = AP_hf+O(h^\infty)f$, where $P_h$ is a zeroth order \sc\ \PDO\ with a principal symbol obtained by $\hat\phi$ pulled back  by the canonical relation of $A$. The essential support of the full symbol is also supported where the principal one is. Therefore, if $A$ is associated to a diffeomorphism at least (but not only), one can choose $\phi_h$ so that $P_h$ plays the role of the low-pass filter needed for proper sampling if the rate of the latter on the image side is given. 
If the inverse problem is well posed in a certain way, we will recover $P_hf$ stably and the latter will be a \sc\ \PDO\ applied to $f$ cutting off higher frequencies which can be viewed as $f$ regularized. We can even choose $P_h$ as desired, and then compute $\phi$ which is general may change from one sample to another. This brings us to the more general question of sampling $Q_hAf$, where $Q_h$ is an \HPDO\ limiting the frequency content, instead of being just a convolution. The analysis is then similar. 

Finally, we prove in Section~\ref{sec_NU} an asymptotic lower bound on the number of non-uniform  sampling points needed to sample stably $f$ with $\WFH(f)$ in a given compact subset $\mathcal{K}$ of $T^*\R^n$. It is of Weyl type and equal to $ (2\pi h)^{-n} \Vol(\mathcal{K}^\text{int})$. This generalizes a theorem by Landau \cite{Landau-sampling} to our setting where the sampling is classical and the number of the sampling points in any $\Omega$ is estimated by below by $(2\pi)^{-n}\Vol(\Omega\times \mathcal{B})$, if $\supp \hat f\subset  \mathcal{B}$. 
 
We want to 
mention that  numerical computations of FIOs by discretization is an important problem by itself, see, e.g., \cite{ Candes_09, Anderson_12, DeHoop_13,Caday_16} which we do not study here. The emphasis of this paper however is different: how the sampling rate and/or the local averaging of the FIO affect the amount of microlocal data we collect and in turn how they could limit or not its microlocal inversion. 
 
\textbf{Acknowledgments.} The author  thanks Fran\c{c}ois Monard for the numerous discussions on both the theoretical and on the numerical aspects of this project;  Yang Yang for allowing him to use his code for generating the numerical examples in Figure~\ref{TAT_pic1}; and Maciej Zworski and Kiril Datchev for discussions on the relationship between classical and \sc\ \PDO s and FIOs. 

\section{Action of \PDO s and FIOs on the \sc\ wave front set} 
\subsection{Wave Front sets} Our main reference for the \sc\ calculus is \cite{Zworski_book}, see also \cite{DZ-book}. For the sake of simplicity, we work in $\R^n$ but those notions are extendable to manifolds.  
Recall that the \sc\ Fourier transform $\mathcal{F}_h f$ of a function depending also on $h$  is given by
\[
\mathcal{F}_hf(\xi) =\int e^{-\i x\cdot\xi/h} f(x)\,\d x.
\]
This is just a rescaled Fourier transform $\mathcal{F}_hf(\xi)=\hat f(\xi/h)$. Its inverse is $ (2\pi h)^{-n}\mathcal{F}_h^*$. 
We recall the definition of the semiclassical wave front set of a tempered $h$-depended distribution first. In this definition, $h>0$ can be arbitrary but in \sc\ analysis,  $h\in (0,h_0)$ is a ``small'' parameter and we are interested in the behavior of functions and operators as $h$ gets smaller and smaller. Those functions are $h$-dependent and we use the notation $f_h$ or $f_h(x)$ or just $f$. 
We follow \cite{Zworski_book} with the  choice of the  Sobolev spaces to be the semiclassical ones defined by the norm
\[
\|f\|^2_{H_h^s}= (2\pi h)^{-n}\int \langle\xi\rangle^{2s}|\mathcal{F}_hf(\xi)|^2\,\d\xi. 
\]
Then an $h$-dependent family $f_h\in\mathcal{S}'$ is said to be $h$-tempered (or just tempered) if $\|f_h\|_{H^s_h}=O(h^{-N})$ for some $s$ and $N$. All functions in this paper are assumed tempered even if we do not say so. The \sc\ wave front set of a tempered family $f_h$ is the complement of those $(x_0,\xi^0)\in\R^{2n}$ for which there exists a $C_0^\infty$ function $\phi$ so that $\phi(x_0)\not=0$ so that
\be{WFH}
\mathcal{F}_h(\phi f_h)= O(h^\infty)\quad \text{for $\xi$ in a neighborhood of $\xi^0$}
\ee
in $L^\infty$ (or in any other ``reasonable'' space, which does not change the notion). The \sc\ wave front set naturally lies in $T^*\R^n$ but it is not conical as in the classical case. Note that the zero section can be in $\WFH(f)$.

There is no direct relationship between the \sc\ wave front $\WFH$ and the classical one $\WF$ (when $h$ is fixed in the latter case), see also \cite{Zworski_book}. For example, for $f\in \mathcal{S}$ independent of $h$, $\WFH(f)=\supp f\times\{0\}$ while $\WF(f)$ is empty. On the other hand, if $g$ is singular and compactly supported, then for $f(x)=g(x-1/h)$ we have $\WFH(f)=\emptyset$ while $\WF(f)$ is non-empty for every $h$, see \cite{Zworski_book}. Sj\"ostrand proposed adding the classical wave front set to $\WFH$ by considering the latter in $T^*\R^{n}\cup S^* \R^{n}$, where the second space (the unit cosphere bundle)  represents $T^*\R^n$ as a conic set, i.e., each $(x,\xi)$ with $\xi$ unit is identified with the ray $(x,s\xi)$, $s>0$. Their points are viewed as ``infinite'' ones describing the behavior as $|\xi|\to\infty$ along different directions.  An infinite point  $(x_0,\xi^0)$ does not belong to the so extended $\WFH(f)$ if we have
\be{infWF}
\mathcal{F}_h(\phi f_h)= O(h^\infty\langle \xi \rangle^{-\infty})\quad  \text{for $\xi$ in a conical neighborhood of $\xi^0$}
\ee
with $\phi$ as above. 
Our interest is in functions which are localized in the spatial variable and  do not have infinite singularities. In  \cite{Zworski_book}, it is said that a tempered  $f_h$  is localized in phase space, if there exists $\psi\in C_0^\infty(\R^{2n})$ so that
\be{loc}
(\Id - \psi(x,hD))f_h=O_{\mathcal{S}}(h^\infty),
\ee
see the definition of \HPDO s below. Such functions do not have infinite singularities and are smooth. 
We work with functions localized in phase space and those are the functions which can be sampled properly anyway. In practical applications, this assumption is satisfied by the natural resolution limit of the data we collect, for example the diffraction limit.

Other examples of \sc\ wave front sets are the following. If $f_h=e^{\i x\cdot \xi_0/h}$, then $\WFH(f)=\R^n\times\{\xi_0\}$. The coherent state 
\be{coh}
f_h (x;x_0,\xi_0)= e^{\i x\cdot \xi_0/h - |x-x_0|^2/2h}
\ee
 (to normalize for unit $L^2$ norm, we need to multiply by $(\pi h)^{-n/4}$) satisfies $\WFH(f)= \{(x_0,\xi_0)\}$. Its real or imaginary part have wave front sets at $(x_0,\xi_0)$ and $(x_0,-\xi_0)$. We will use such states in our numerical examples. 

It is convenient to introduce the notation $\Sigma_h(f)$ for the \sc\ frequency set of $f$.
\begin{definition}\label{def_S} For each tempered $f_h$ localized  in phase space, set 
\[
\Sigma_h(f) = \{\xi;\; \text{$\exists x$ so that $(x,\xi)\in\WFH(f)$}\}.
\]
\end{definition}
In other words, $\Sigma_p$ is the projection of $\WFH(f)$ to the second variable, i.e.,
\be{Sigma}
\Sigma_h(f)= \pi_2\circ \WFH(Af),
\ee
where $\pi_2(x,\xi)=\xi$. 
 If $\WFH(f)$ (which is always closed) is bounded and therefore compact, then $\Sigma_h(f)$ is compact.

\begin{definition}\label{def_B}
We say that   $f_h\in C_0^\infty(\R^n)$  is \sc ly band limited (in $\mathcal{B}$),  
if (i) $\supp f_h$ is contained in an $h$-independent compact set, 
(ii) $f$ is tempered,  and (iii) 
there exists a compact set  $\mathcal{B}\subset\R^n$,  so that  for every open $U\supset \mathcal{B}$, we have 
\be{defB}
|\mathcal{F}_h f(\xi)|\le  C_N h^N\langle\xi\rangle^{-N}\quad \text{for $\xi\not\in U$}
\ee
for every $N>0$. 
\end{definition}

If $h$ is fixed, this estimate trivially holds  for every $\xi$. Its significance is in  the $h$ dependence. 
In particular, such functions do not have infinite singularities, and are localized in phase space. In applications, we  take $\mathcal{B}$ to be $[-B,B]^n$ with some $B>0$ or the ball $|\xi|\le B$ or some other set, see, e.g., Figure~\ref{fig_f_gaussian}. 

As an example, for $0\not=\chi\in C_0^\infty$, $f:= \chi(x)e^{\i x\cdot\xi_0/h}$ is \sc ly band limited with $\mathcal{B}=\{\xi_0\}$. Indeed, $\mathcal{F}_hf(\xi) = \hat\chi((\xi-\xi_0)/h)$ decays rapidly for $\xi\not=\xi_0$ as $h\to0$. Clearly, that decay is not uniform as $\xi\to\xi_0$ which explains the appearance of $U$ in the definition. We could have required \r{defB} to hold in the closure of $\R^n\setminus \mathcal{B}$ to avoid introducing $U$; then in this example, one can take $\mathcal{B}$ to be the closure of every neighborhood of $\xi_0$. Both definitions would work fine for the intended applications. To generalize this example, we can take a superposition of such functions with $\xi_0$ varying over a fixed compact set $\mathcal{B}$ to get $f=\chi \mathcal{F}_h^{-1}g$ with $g\in L^1$, $\supp g\subset\mathcal{B}$ to be \sc ly band limited with frequency set in $\mathcal{B}$. 

Another example of \sc ly band limited functions can be obtained by taking any $f\in\mathcal{E}'(
\R^n)$ and convolving if with $\phi_h=h^n\phi(\cdot/h)$ with $\supp\hat\phi\in C_0^\infty$. Then $\phi_h*f$ is \sc ly band limited with $\mathcal{B}=\supp\hat\phi$. 

\begin{proposition}\label{pr_bl} 
Let  $\mathcal{B}\subset\R^n$ be a compact set. For every  tempered $f_h\in C_0^\infty$ with support contained in an $h$-independent compact set, the following statements are equivalent:

(a) $f_h$ is \sc ly band limited, 

(b) $f_h$ is localized in phase space, 

(c) $\WFH(f)$ is finite and compact. 
\end{proposition}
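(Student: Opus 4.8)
The plan is to prove $(a)\Leftrightarrow(b)$ by an elementary computation with Fourier multipliers, and $(b)\Leftrightarrow(c)$ from the microlocal calculus of semiclassical $\Psi$DO s. Here $\WFH$ in $(c)$ is read in the extended sense of \r{infWF}, so that ``finite'' means the absence of infinite singularities and ``compact'' means bounded; this reading is genuinely needed, since with $\WFH$ in the plain sense of \r{WFH} the statement is false --- e.g.\ $\chi(x)\cos(x_1/h^2)$ (with $0\ne\chi\in C_0^\infty$) is tempered, smooth and compactly supported and has empty plain \sc\ wave front set, yet it is neither band limited nor localized in phase space because its frequencies escape to $\pm1/h$. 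The delicate direction will be $(c)\Rightarrow(b)$.

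For $(a)\Rightarrow(b)$: if $f_h$ is band limited in a compact $\mathcal B$, fix $\kappa\in C_0^\infty(\R^n)$ with $\kappa=1$ on an open $U\supset\mathcal B$ and $\rho\in C_0^\infty(\R^n)$ with $\rho=1$ near $\supp f_h$, and set $\psi(x,\xi)=\rho(x)\kappa(\xi)$. Since $\psi(x,hD)f_h=\rho\,\kappa(hD)f_h$ and $\rho f_h=f_h$, one gets $(\Id-\psi(x,hD))f_h=\rho\,\mathcal F_h^{-1}\!\big((1-\kappa)\mathcal F_h f_h\big)$; as $1-\kappa$ is supported where \r{defB} applies, $\langle\xi\rangle^{M}(1-\kappa)\mathcal F_h f_h$ is $O(h^\infty)$ in $L^1$ for every $M$, so its inverse \sc\ Fourier transform, hence its product with the compactly supported $\rho$, is $O_{\mathcal S}(h^\infty)$, which is \r{loc}. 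For $(b)\Rightarrow(a)$: from $f_h=\psi(x,hD)f_h+O_{\mathcal S}(h^\infty)$ put $\mathcal B:=\pi_2(\supp\psi)$, and given an open $U\supset\mathcal B$ pick $\theta\in C_0^\infty(\R^n)$ with $\theta=1$ near $\mathcal B$ and $\supp\theta\subset U$. Every term of the symbol of $(1-\theta)(hD)\,\psi(x,hD)$ is a product of a derivative of $1-\theta$, supported in $\{\theta\ne1\}$, with a derivative of $\psi$, supported where $\theta=1$; hence the symbol vanishes identically and the operator is $O(h^\infty)$. Applying it to $f_h$ and taking $\mathcal F_h$ gives $\mathcal F_h f_h(\xi)=O(h^\infty\langle\xi\rangle^{-\infty})$ for $\xi\notin U$ (there $1-\theta\equiv1$), which is \r{defB}.

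For $(b)\Rightarrow(c)$: from $f_h=\psi(x,hD)f_h+O_{\mathcal S}(h^\infty)$, pseudolocality and microlocality of \HPDO s give $\WFH(\psi(x,hD)f_h)\subseteq\supp\psi$; since the \sc\ wave front set of an $O_{\mathcal S}(h^\infty)$ family is empty and that of a sum lies in the union, $\WFH(f_h)\subseteq\supp\psi$, which is compact and contains no infinite points. For $(c)\Rightarrow(b)$: choose $\psi\in C_0^\infty(\R^{2n})$ equal to $1$ on a neighborhood of the compact set $\WFH(f_h)$. Then $\Id-\psi(x,hD)$ is elliptic off $\supp(1-\psi)$, which is disjoint from a neighborhood of $\WFH(f_h)$, so the microlocal calculus gives $\WFH\big((\Id-\psi(x,hD))f_h\big)=\emptyset$ in the extended sense. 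Since $\psi(x,hD)$ has a Schwartz kernel that is $O_{\mathcal S}(h^\infty)$ away from the diagonal, $(\Id-\psi(x,hD))f_h$ agrees, modulo $O_{\mathcal S}(h^\infty)$, with a family supported in a fixed compact set; and a tempered family supported in a fixed compact set with empty extended \sc\ wave front set is $O_{\mathcal S}(h^\infty)$, which is \r{loc}.

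The main obstacle is this last sublemma: turning the family of purely local estimates in \r{WFH} and \r{infWF} into one estimate uniform over all of $\{\xi\notin U\}$, including $|\xi|\to\infty$. I would (i) patch the $x$-cutoffs by a finite partition of unity subordinate to a cover of the fixed spatial support, using that multiplication by each transition function is a zeroth order \HPDO\ and hence preserves rapid decay of $\mathcal F_h$; (ii) use compactness to cover both the relevant bounded set of finite frequencies and the sphere of ``infinite'' directions by finitely many microlocal cutoffs; and (iii) promote the conclusion to the weight $\langle\xi\rangle^{-N}$ as $|\xi|\to\infty$. Step (iii) is exactly where the ``finite'' half of $(c)$ (equivalently, $f_h$ localized in phase space, so no infinite singularities) is indispensable, as the example above shows. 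Finally I would record the routine facts used implicitly --- chiefly that temperedness bounds $\mathcal F_h f_h$ by a fixed power of $\langle\xi\rangle/h$, which legitimizes the non-stationary phase estimates above.
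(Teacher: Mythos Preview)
Your proof is correct and follows essentially the same strategy as the paper's: product cutoffs $\rho(x)\kappa(\xi)$ for $(a)\Leftrightarrow(b)$, and a partition-of-unity/compactness argument to pass between the local wave-front conditions and the global decay in \r{defB} for the link with $(c)$. Two minor differences are worth noting. First, for $(b)\Rightarrow(a)$ you invoke the symbol calculus to see that $(1-\theta)(hD)\,\psi(x,hD)$ has identically vanishing full symbol, whereas the paper instead applies $(\Id-\chi_2(hD))$ and then uses Sobolev embedding together with the identity $-\i\partial_\xi\leftrightarrow x/h$ to upgrade the $H^s_h$ bound to an $L^\infty$ bound on $\mathcal F_hf_h$; both are short and valid. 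Second, you close the triangle via $(b)\Leftrightarrow(c)$ (with your sublemma that a compactly supported tempered family with empty extended $\WFH$ is $O_{\mathcal S}(h^\infty)$), while the paper closes it via $(a)\Leftrightarrow(c)$, deriving \r{defB} directly from \r{infWF} by a partition of unity --- the content is the same compactness argument you sketch. Your explicit identification of the extended reading of $\WFH$ in $(c)$, with the counterexample $\chi(x)\cos(x_1/h^2)$, is a useful clarification that the paper leaves implicit.
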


\begin{proof}
Let $f_h$ satisfy the conditions of Definition~\ref{def_B}. Then $\WFH(f)$ has no infinite points.   Let $\chi_1\in C_0^\infty$ be equal to $1$ in a neighborhood of $\supp f_h$ for all $h\ll1$, and $\chi_2$ be supported in the bounded $U\Supset\mathcal{B}$ and equal to $1$ near $\mathcal{B}$. Then \r{loc} is satisfied with $\psi=\chi_1(x) \chi_2(\xi)$. Indeed, by \r{defB}, for every $s$, 
\be{7}
(\Id - \chi_2(hD))f_h= O_{H_h^s}(h^\infty).
\ee
This implies the same estimate in the Schwartz space $\mathcal{S}$ as well. Apply $\chi_1(x)$ to get \r{loc}. 
Therefore, (a) $\Rightarrow$ (b). 

Next, assume (b).  
Let the compact set $\mathcal{B}\subset\R^n$ be such that for $\psi$ in \r{loc} we have $\psi(x,\xi)=0$ for all $x$ and $\xi\not\in\mathcal{B}$.  
With 
$U$ as in Definition~\ref{def_B}, we can apply a \sc\ Fourier multiplier $\Id-\chi_2(hD)$ with $\chi_2$ as above to get \r{7}. Therefore,  
\[
\int
\langle \xi\rangle^{2s}| (1-\chi_2(\xi))\mathcal{F}_hf_h(\xi)|^2\,\d\xi = O(h^\infty).
\]
Set $g(\xi) = \langle\xi\rangle^{s} (1-\chi_2(\xi))\mathcal{F}_hf_h(\xi)$. Using Sobolev embedding and the fact that $-\i\partial_\xi$ corresponds to $x/h$ via $\mathcal{F}_h$, we get $\|g\|_{L^\infty}= O(h^\infty)$. This proves \r{defB}. Therefore, (b) $\Rightarrow$ (a). 

Assume (c). Using \r{infWF} and a partition of unity, we get \r{defB}, i.e., (a) holds. On the other hand, (a) implies (c) directly. 
\end{proof}

Let $f_h$ be \sc ly band limited and let  $B$ be so that $\Sigma_h(f)\subset \{|\xi|<B\}$. Then 
$f_h=\chi_1(x) \chi_2(hD)f_h+O_\mathcal{S}(h^\infty)$ with $\chi_{1,2}$ as in the proof of Proposition~\ref{pr_bl}. We can assume $\supp\chi_2\subset \{|\xi|<B\}$.  Apply the operator $\langle hD\rangle^s$ to $f_h$, then on $\WFH(f)$, we get that $\langle hD\rangle^s\chi_1(x) \chi_2(D) $ has full symbol $\langle \xi\rangle^s$ up to the negligible class. Therefore, knowing $\|f_h\|_{L^2}$ for a \sc ly band limited $f_h$ allows us to control the \sc\ Sobolev norms of every order $s\ge0$ as well:
\be{Sob}
\|f_h\|_{H_h^s}\le  \langle B\rangle^s\|f_h\|_{L^2} + O_s(h^\infty). 
\ee

\subsection{$h$-\PDO s} We define the symbol class $S^{m,k}(\Omega)$, where $\Omega\subset \R^n$ is an open set, as the smooth functions $p(x,\xi)$ on $\R^{2n}$, depending also on $h$, satisfying the symbol estimates
\be{hpdo1}
|\partial_x^\alpha \partial_\xi^\beta p(x,\xi)|\le C_{K,\alpha,\beta}h^k\langle \xi\rangle^m
\ee
for $x$ in any compact set $K\subset\Omega$. The negligible class $S^{-\infty,\infty}$ is the intersection of all $S^{m,k}$. Given $p\in S^{m,k}(\Omega)$, we write $P=P_h=p(x,hD)$ with 
\be{hPDO}
Pf(x) = (2\pi h)^{-n}\iint e^{\i (x-y)\cdot\xi/h} p(x,\xi) f(y)\,\d y\, \d\xi,
\ee
where the integral has to be understood as an oscillatory one. If we stay with functions localized in phase space, the factor $\langle\xi\rangle^m$ is not needed and we can work with symbols compactly supported in $\xi$. Then the corresponding classes are denoted by $S^k(\Omega)$ and $k$  is called an order. One can always divide by $h^k$; so understanding zero order operators is enough.

\subsection{Classical \PDO s and \sc\ wave front sets} 
We begin with an informal  discussion about the relationship between classical and \sc\ \PDO s. Let us denote by $\xi$ the dual variable in the classical \PDO\ calculus, and by $\eta$ the dual variable  in the \sc\ case. Formally,  by \r{hPDO} (with $\xi$ replaced by $\eta$ there), we have $\eta=h\xi$ and after this substitution, we seem to get a classical \PDO. The problem is that classical symbols do not need to be smooth or even defined for $\xi$ in a compact set; say for $|\xi|\le C$ with some $C$. Then $\eta=h\xi$ maps this to $|\eta|\le Ch$. Semiclassical symbols however need to be defined and smooth for every $\eta$ in a neighborhood of the \sc\ wave front set of the function we want to study. We see that the zero section $\eta=0$ needs to be excluded. If we try to rectify this problem by multiplying a classical symbols $p(x,\xi)$ by $\chi(\xi)$ with  $\chi\in C_0^\infty$, $\chi=0$ near $\xi=0$ and $\chi=1$ for large $\xi$, we run into the problem that $\partial_\eta \chi(\eta/h)\sim h^{-1}$ and the symbol estimates \r{hpdo1} are not satisfied for $\chi(\eta/h) p(x,\eta/h)$ near $\eta=0$. 

This shows that the zero section needs to be treated separately. Even when that problem does not exists, classical ellipticity does not necessarily mean \sc\ one. For example, $-\Delta$ is a classical elliptic \PDO\ with symbol $|\xi|^2$ while $|\eta|^2/h^2$ is the \sc\ symbol of the same operator but $-\Delta$ is not \sc ly elliptic anymore (in $S^{2,-2}$).

\begin{proposition}
Let $K$ be a smoothing operator, and let $f_h\in\mathcal{E}'(\R^n)$ be tempered. Then $\WFH(Kf)\subset \R^n\times\{0\}$. 
\end{proposition}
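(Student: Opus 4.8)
The plan is to show that if $K$ is smoothing, i.e.\ its Schwartz kernel $K(x,y)$ is smooth, then for any tempered $f_h\in\mathcal{E}'(\R^n)$ the function $u_h:=Kf_h$ has $\WFH(u_h)$ contained in the zero section $\R^n\times\{0\}$. The core of the argument is a single stationary-phase / non-stationary-phase estimate showing that $\mathcal{F}_h(\phi u_h)(\xi)=O(h^\infty)$ for $\xi$ in a neighborhood of any $\xi^0\neq0$, where $\phi\in C_0^\infty$ is arbitrary. So fix $\xi^0\neq0$ and a cutoff $\phi$. First I would write
\[
\mathcal{F}_h(\phi u_h)(\xi)=\int e^{-\i x\cdot\xi/h}\phi(x)\int K(x,y)f_h(y)\,\d y\,\d x
=\big\langle f_h,\ \overline{\textstyle\int e^{-\i x\cdot\xi/h}\phi(x)K(x,y)\,\d x}\ \big\rangle_y,
\]
pairing the compactly supported distribution $f_h$ against the smooth (in $y$) test function $g_h(y;\xi):=\int e^{-\i x\cdot\xi/h}\phi(x)K(x,y)\,\d x$.

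The key step is that $g_h(\cdot\,;\xi)$, together with all its $y$-derivatives, is $O(h^\infty)$ uniformly for $y$ in the (fixed) support of $f_h$ and for $\xi$ near $\xi^0$. This is just non-stationary phase in the $x$-integral: since $\partial_x\big(-x\cdot\xi/h\big)=-\xi/h$ never vanishes for $\xi$ near $\xi^0\neq0$, repeated integration by parts with the operator $L=\frac{h}{-\i|\xi|^2}\,\xi\cdot\partial_x$ (which satisfies $L e^{-\i x\cdot\xi/h}=e^{-\i x\cdot\xi/h}$) gains a factor $h$ each time, and all derivatives landing on $\phi(x)K(x,y)$ are bounded because $K$ is smooth and $\phi$ is compactly supported; differentiating in $y$ beforehand only changes $K$ to another smooth kernel, so the same bound holds for $\partial_y^\alpha g_h$. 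Hence for every $N$, $\sup_{y\in\supp f_h}\,|\partial_y^\alpha g_h(y;\xi)|\le C_{N,\alpha}h^N$ for $\xi$ near $\xi^0$.

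Finally, since $f_h$ is tempered and compactly supported, the pairing $\langle f_h,\overline{g_h}\rangle$ is bounded by a finite sum of seminorms $\sup_{y}|\partial_y^\alpha g_h(y;\xi)|$ times $\|f_h\|_{H_h^s}$ for some fixed $s$, hence by $h^{-M}\cdot C_N h^N$ for suitable fixed $M$, which is $O(h^\infty)$ since $N$ is arbitrary. This holds uniformly for $\xi$ in a neighborhood of $\xi^0$, which is exactly the condition \r{WFH} showing $(x_0,\xi^0)\notin\WFH(Kf)$ for every $x_0$ and every $\xi^0\neq0$. Therefore $\WFH(Kf)\subset\R^n\times\{0\}$. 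The one point requiring a little care — the main (minor) obstacle — is to make the tempered-distribution pairing estimate clean: one should record that temperedness of $f_h$ in $H_h^s$ with $O(h^{-N})$ bounds means the duality constant against a Schwartz-type family is controlled by finitely many $h$-weighted seminorms of $g_h$, so that the $O(h^\infty)$ decay of $g_h$ survives the pairing; everything else is routine non-stationary phase.
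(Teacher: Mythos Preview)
Your proof is correct and follows essentially the same route as the paper: both localize with $\phi$, then integrate by parts in $x$ using the operator $L=\i h|\xi|^{-2}\xi\cdot\nabla_x$ (which preserves $e^{-\i x\cdot\xi/h}$) to gain arbitrary powers of $h$ for $\xi$ bounded away from $0$, and both absorb the distribution $f_h$ via temperedness. The only organizational difference is that the paper makes the Sobolev duality explicit by inserting $(1-h^2\Delta_y)^m(1-h^2\Delta_y)^{-m}$ and moving the first factor onto the smooth kernel $K(x,y)$, whereas you state the pairing bound more abstractly; either way the estimate is the same.
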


\begin{proof}
Let $\phi,\psi\in C_0^\infty$ such that $\psi=1$ near some $\xi_0\not=0$ but $\psi=0$ near $0$. Then
\[
\begin{split}
\psi\mathcal{F}_h(\phi Kf)(\xi) &=  \psi(\xi)\int e^{-\i x\cdot\xi/h}\phi(x)  Kf(x)\,\d x \\
&= \psi(\xi)\iint e^{-\i x\cdot\xi/h}\phi(x)  K(x,y) f(y)\,\d x\,\d y \\
&=   \psi(\xi)\iint e^{-\i x\cdot\xi/h}  \phi(x)\left[(1-h^2\Delta_y)^m  K(x,y)\right] (1-h^2\Delta_y)^{-m} f(y)\,\d x\,\d y .
\end{split}
\]
For $m\gg1$, $(1-h^2\Delta)^{-m} f\in L^2$ with a norm $O(h^{-N})$ for some $N$. Fix one such $m$. We have $L^k e^{-\i x\cdot\xi/h} = e^{-\i x\cdot\xi/h}$ for every $k$ with  $L= \i h|\xi|^{-2} \xi\cdot \nabla_x$. Integrating by parts with $L^k$, we get $\psi\mathcal{F}_h(\phi Kf) = O(h^\infty)$. Therefore, every $(x,\xi)$ with $\xi\not=0$ is not in $\WFH(Kf)$. 
\end{proof}

\begin{theorem}\label{thm_PDO}
Let $P$ be a properly supported \PDO\  of order $m$. Then for every  $f_h$ localized in phase space,
\[
\WFH(Pf)\setminus 0\subset \WFH(f)\setminus 0.
\]
If $P$ is elliptic, then the inclusion above is an equality. 
\end{theorem}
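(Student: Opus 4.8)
The plan is to use that, on a function localized in phase space, a classical $\Psi$DO acts microlocally like a semiclassical one away from the zero section, which for $h$-$\Psi$DO's is standard (see \cite{Zworski_book}). First I would reduce to $f_h\in C_0^\infty$ with an $h$-independent compact support: by \r{loc}, $f_h=\psi(x,hD)f_h+O_{\mathcal S}(h^\infty)$ with $\psi\in C_0^\infty(\R^{2n})$, and $\psi(x,hD)f_h$ is smooth and supported in the $x$-projection of $\supp\psi$; since $P$ is properly supported, passing to it changes neither $\WFH(f)$ nor $\WFH(Pf)$. Then $\WFH(f)$ is finite and compact by Proposition~\ref{pr_bl}. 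Using a partition of unity adapted to the cover of $\R^n$ by $\{|\xi|<\delta\}$ and $\{|\xi|>\delta/2\}$ (with $\delta>0$ small), pick $\chi_0,\chi_1\in C_0^\infty(\R^n)$ with $\chi_0+\chi_1=1$ near $\Sigma_h(f)$, $\supp\chi_0\subset\{|\xi|<\delta\}$, $\supp\chi_1\subset\{|\xi|>\delta/2\}$. By Definition~\ref{def_B}, $(\Id-\chi_0(hD)-\chi_1(hD))f_h=O_{\mathcal S}(h^\infty)$, hence $Pf_h=P\chi_0(hD)f_h+P\chi_1(hD)f_h+O(h^\infty)$.

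The essential term is $P\chi_1(hD)f_h$. Composing the classical $\Psi$DO $p(x,D)$ with the Fourier multiplier $\chi_1(hD)$ gives exactly the $\Psi$DO with symbol $p(x,\xi)\chi_1(h\xi)$, i.e., in semiclassical notation, the $h$-$\Psi$DO with symbol $q_h(x,\eta)=p(x,\eta/h)\chi_1(\eta)$. On $\supp\chi_1$ one has $|\eta|\asymp 1$, hence $\langle\eta/h\rangle\asymp h^{-1}$; each $\eta$-derivative of $p(x,\eta/h)$ produces a factor $h^{-1}$ but lowers the order of $p$ by one, so the classical symbol estimates give $|\partial_x^\alpha\partial_\eta^\beta q_h|\le C_{\alpha\beta}h^{-m}$, i.e., $q_h\in h^{-m}S^0$ with $S^0$ the zeroth order semiclassical class. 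Hence $P\chi_1(hD)$ is a genuine $h$-$\Psi$DO, so it does not enlarge $\WFH$; applying this to it and to the $h$-$\Psi$DO $\chi_1(hD)$ gives $\WFH(P\chi_1(hD)f_h)\subset\WFH(\chi_1(hD)f_h)\subset\WFH(f)$.

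For $P\chi_0(hD)f_h$ I would argue directly: for $\phi\in C_0^\infty$, $\mathcal{F}_h(\phi P\chi_0(hD)f_h)(\eta)$ equals, up to a constant,
\[
h^{-n}\iint e^{\i x\cdot(\xi-\eta)/h}\,\phi(x)\,p(x,\xi/h)\,\chi_0(\xi)\,\mathcal{F}_h f(\xi)\,\d x\,\d\xi .
\]
For $|\eta|\ge 2\delta$ the factor $|\xi-\eta|$ is bounded below on $\supp\chi_0$, so repeated integration by parts in $x$ with $L=\i h|\xi-\eta|^{-2}(\xi-\eta)\cdot\nabla_x$ gains an arbitrary power of $h$; the only competing losses are $|\partial_x^\alpha p(x,\xi/h)|\lesssim h^{-|m|}$ on $\supp\chi_0$ and the polynomial-in-$h^{-1}$ bound on $\|\mathcal{F}_h f\|_{L^1(\supp\chi_0)}$ from temperedness. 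Thus the integral is $O(h^\infty)$ for $|\eta|\ge 2\delta$, whence $\WFH(P\chi_0(hD)f_h)\subset\R^n\times\{|\xi|\le 2\delta\}$; as $\delta$ is arbitrary, $\WFH(P\chi_0(hD)f_h)\subset\R^n\times\{0\}$. Combining the three contributions yields $\WFH(Pf)\subset\WFH(f)\cup(\R^n\times\{0\})$, the asserted inclusion off the zero section.

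For the elliptic case I would use a classical parametrix: let $Q$ be a classical $\Psi$DO of order $-m$ with $QP=\Id+R$, $R$ smoothing. Since $Pf_h$ is smooth, compactly supported, and has compact $\WFH$, hence is localized in phase space, the inclusion just proved applies to $Q$ and $Pf_h$: $\WFH(QPf_h)\setminus 0\subset\WFH(Pf_h)\setminus 0$; also $\WFH(Rf_h)\subset\R^n\times\{0\}$ by the proposition on smoothing operators above. From $f_h=QPf_h-Rf_h$ one obtains $\WFH(f)\setminus 0\subset\WFH(Pf)\setminus 0$, hence equality with the first part. I expect the main obstacle to be the second paragraph: verifying that once the zero section is removed and $\eta$ kept bounded the rescaled classical symbol $q_h=p(\cdot,\eta/h)\chi_1(\eta)$ is a bona fide semiclassical symbol uniformly in $h$, and that one genuinely stays inside the $h$-calculus (including through the composition with $Q$ in the elliptic step); this is exactly the subtlety flagged in the discussion preceding the theorem. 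The remaining steps are routine.
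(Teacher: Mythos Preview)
Your proof is correct and follows essentially the same approach as the paper: both split $f$ into a piece near the zero section (handled by direct integration by parts in the oscillatory integral for $\mathcal{F}_h(\phi Pf)$) and a piece away from it (where the rescaled classical symbol $p(x,\eta/h)$ is shown to be a genuine semiclassical symbol of order $h^{-m}$), then invoke a classical parametrix $Q$ for the elliptic statement. Your packaging via the exact composition $P\chi_1(hD)$ with symbol $p(x,\eta/h)\chi_1(\eta)$ is slightly tidier than the paper's direct argument on $|\eta|\ge\eps$, but the content and the key subtlety you flag---that the semiclassical symbol estimates hold uniformly only once $\eta$ is bounded away from zero---are identical.
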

\begin{proof}
By the proposition above, the property of the theorem is invariant under adding a smoothing operator as it should be. Let $p(x,\xi)$ be the symbol of $P$, so that $P=p(x,D)$ modulo a smoothing operator. Then $P$ is a formal $h$-\PDO\ with a symbol $p(x,\xi/h)$. The latter is a \sc\ symbol away from every neighborhood of  $\xi=0$. Indeed, for $x$ in every compact set $K$,
\[
|\partial_x^\alpha\partial_{\xi}^\beta p(x,\xi/h)|\le C_{\alpha, \beta,K}  h^{-|\beta|}  |\xi/h|^{m-|\beta|} = C_{\alpha, \beta,K} h^{-m}  |\xi|^{m-|\beta|}  \quad\text{for $|\xi/h|>1$}
\]
and in particular,
\[
|\partial_x^\alpha\partial_{\xi}^\beta p(x,\xi/h)|\le C_{\alpha, \beta,K}  h^{-m}  |\xi|^{m-|\beta|}\quad\text{for $|\xi|\ge\eps>0$}
\]
for every $\eps>0$ and $0<h\le \eps$. This shows that $p(x,\xi/h)$ is a \sc\ symbol of order $(m,-m)$ restricted to $|\xi|>\epsilon$. 
Note that the smallness requirement on $h$ depends on $\eps$. To complete the proof, we need to resolve this problem. 

We show next that for every fixed $\eps>0$, if $\WFH(f)\subset B_\eps(0)$, then $\WFH(Pf)\subset B_\eps(0)$ as well. This is a weaker version of what we want to prove but it is valid near $\xi=0$. 

Since $P$ is properly supported, with $\phi\in C_0^\infty$ as in the previous proof, we may assume that in $\phi Pf$, the function $f$ is supported in a fixed compact set. Then by a compactness argument, for every $\eps'>\eps$, we have $\mathcal{F}_h f(\xi)=O(h^\infty)$ for $|\xi|\ge\eps'$. Then 
\[
\begin{split}
\mathcal{F}_h(\phi Pf)(\eta)  &= (2\pi h)^{-n} \iiint e^{-\i x\cdot \eta/h +\i(x-y)\cdot\xi } \phi(x) a(x,\xi) f(y)\,\d y\, \d \xi\,\d x\\
 &= (2\pi h^2)^{-n} \iiint e^{-\i x\cdot \eta/h +\i(x-y)\cdot\xi /h} \phi(x) a(x,\xi/h) f(y)\,\d y\, \d \xi\,\d x\\
 & = (2\pi h)^{-n} \iint e^{\i x\cdot (\xi-\eta)/h} \phi(x) a(x,\xi/h) \mathcal{F}_hf(\xi)\, \d \xi\,\d x.
\end{split}
\]
The integration above can be restricted to $|\xi|\le\eps'$ with $\eps'>\eps$ fixed, and this will result in an $O(h^\infty)$ error.  
For the phase function $\i\Phi/h= \i x\cdot(\xi-\eta)/h$ we have  $\Phi_x= \xi-\eta$, i.e. the zeros are on the diagonal $\xi= \eta$ in the fiber variable. The following operator preserves $\exp(\i\Phi/h)$:
\[
L =- \i  \frac{\xi-\eta}{|\xi-\eta|^2}\cdot h\nabla_x .
\]
Since $|\xi|\le\eps'$, if we restrict $\eta$ to $|\eta|>\eps''$ with $\eps''>\eps'$, and integrate by parts, we get $O(h^\infty|\xi|^{-\infty} )$ above. This proves that $\WFH(Pf)$ is included in $\R^n\times B_{\eps''}(0)$. Since $\eps''>\eps$ can be taken as close to $\eps$ as we wish, this proves the claim. 

Now, using \HPDO\ cutoffs, we express $f$ as $f=f_1+f_2$, with $\WFH(f_1)$ included in $|\xi|\le\eps$ and $\WFH(f_2)$ included in $|\xi|\ge\eps/2$. By the claim above,  $\WFH(Pf_1)$ is   included in $|\xi|\le\eps$ as well. For $\WFH(Pf_2)$, by what we proved earlier, $\WFH(Pf_2)\subset \WFH(f_2)$. Therefore, $\WFH(f)\subset\WFH(f)\cup \R^n\times B_\eps(0)$ for every $\eps>0$ which proves that  $\WFH(f)\subset\WFH(f)\cup \R^n\times \{0\}$. In particular, this proves the first part of the theorem. 

To prove the second part, if $P$ is elliptic, there is a parametrix $Q$ of order $-m$ so that $QPf=f+Kf$, where $K$ is smoothing. Then we apply the first part of the proof. 
\end{proof}

For future reference, note that we also proved that for every $\eps>0$, every classical \PDO\ is  also a \sc\ one restricted to $f$ with $\WFH(f)$ not containing $\xi$ with $|\xi|\le\eps$. 

\subsection{Classical FIOs  and \sc\ wave front sets} 


\begin{theorem}\label{thm_FIO}
Let $A$ be an   FIO in the class $I^m(\R^{n_2},\R^{n_1},\Lambda)$, where $\Lambda\subset T^*(\R^{n_1}\times\R^{n_2})\setminus 0$  is a Lagrangian manifold and $m\in\R$.  Then for every 
$f_h$ localized in phase space, 
\be{FIO1}
\WFH(Af)\setminus 0\subset C\circ\WFH(f)\setminus 0,
\ee
where $C=\Lambda'$ is the canonical relation of $A$. 
\end{theorem}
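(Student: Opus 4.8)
The plan is to mimic the proof of Theorem~\ref{thm_PDO}: pass to a local oscillatory‑integral representation of $A$, rescale the phase variable by $h$ so that $A$ becomes a formal $h$‑FIO, invoke the known mapping property of genuine $h$‑FIOs away from the zero section, and treat the zero section by a separate non‑stationary phase estimate. First I would localize. Since a smoothing operator sends any tempered $\mathcal{E}'$‑family into $\R^n\times\{0\}$ (by the Proposition above) and since both sides of \r{FIO1} are microlocal in nature, I may assume, after a microlocal partition of unity on $\Lambda$ and using proper support, that
\[
Af(x) = (2\pi)^{-N}\int e^{\i\Phi(x,y,\theta)}a(x,y,\theta)\,f(y)\,\d y\,\d\theta,
\]
with $\Phi$ a nondegenerate phase function, homogeneous of degree $1$ in $\theta\in\R^N\setminus0$, locally parametrizing $\Lambda$ — so that $C=\{((x,\p_x\Phi),(y,-\p_y\Phi)):\p_\theta\Phi=0\}$ — and $a$ a classical amplitude with $x$‑support in a fixed compact set. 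Writing $a=a_0+a_1$ with $\supp a_0\subset\{|\theta|\le2\}$ and $\supp a_1\subset\{|\theta|\ge1\}$, the operator carrying $a_0$ has a $C^\infty$, properly supported kernel, hence is smoothing and may be discarded, so I take $\supp a\subset\{|\theta|\ge1\}$. Substituting $\theta=\Theta/h$ and using $\Phi(x,y,\Theta/h)=h^{-1}\Phi(x,y,\Theta)$ turns $A$ into
\[
Af(x)=(2\pi h)^{-N}\int e^{\i\Phi(x,y,\Theta)/h}a(x,y,\Theta/h)\,f(y)\,\d y\,\d\Theta,
\]
and, exactly as in the proof of Theorem~\ref{thm_PDO}, $a(x,y,\Theta/h)$ satisfies semiclassical symbol estimates (of the appropriate order $(m',-m')$) on every region $\{|\Theta|\ge\eps\}$ once $0<h\le\eps$. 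I will also use two properties of $C$, both following from the fact that its relevant part is a proper conic relation contained in $(T^*\R^{n_1}\setminus0)\times(T^*\R^{n_2}\setminus0)$: it maps compact sets to compact sets, and $|\xi|\le c^{-1}|\eta|$ on $C$ over a fixed compact base, with $c>0$.

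Next I would prove a weaker statement, valid near $\xi=0$: for every $\eps>0$ there is $\eps''=\eps''(\eps)$, with $\eps''\to0$ as $\eps\to0$, such that $\WFH(f)\subset\{|\eta|\le\eps\}$ implies $\WFH(Af)\subset\{|\xi|\le\eps''\}\cup(\R^n\times\{0\})$. For this I fix $\phi\in C_0^\infty$, expand $f$ by semiclassical Fourier inversion, and write $\mathcal{F}_h(\phi Af)(\xi)$ as an oscillatory integral in $(x,y,\Theta)$ with phase $\Psi=-x\cdot\xi+\Phi(x,y,\Theta)+y\cdot\eta$ and amplitude proportional to $\phi(x)\,a(x,y,\Theta/h)\,\mathcal{F}_hf(\eta)$, treating $\eta$ as a parameter. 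Because $\WFH(f)\subset\{|\eta|\le\eps\}$, up to $O(h^\infty)$ the $\eta$‑integration runs over $|\eta|\le\eps'$ for any fixed $\eps'>\eps$. If $|\xi|\ge\eps''$ with $c^{-1}\eps'<\eps''$, then $\Psi$ has no critical point in $(x,y,\Theta)$ on the integration region: a critical point forces $\p_\theta\Phi=0$, $\xi=\p_x\Phi$, $\eta=-\p_y\Phi$, hence $((x,\xi),(y,\eta))\in C$ and therefore $|\xi|\le c^{-1}|\eta|\le c^{-1}\eps'<\eps''\le|\xi|$, a contradiction. A non‑stationary phase argument then gives $\mathcal{F}_h(\phi Af)(\xi)=O(h^\infty)$ for $\xi$ in a neighbourhood of any such point: one decomposes the $\Theta$‑integral dyadically and integrates by parts — in $x$ alone where $|\Theta|$ is small (there $|\p_x\Phi|$ is negligible against $\xi$, so $|\nabla_x\Psi|\gtrsim\eps''$ and $x$‑derivatives of $a(x,y,\Theta/h)$ cost no power of $h$), and in all of $(x,y,\Theta)$ where $|\Theta|$ is large (there homogeneity of $\Phi$ forces $|\nabla\Psi|\gtrsim|\Theta|$, which outweighs the factor $h^{-1}$ lost per $\Theta$‑derivative of $a(x,y,\Theta/h)$).

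Finally I would assemble the theorem. Fix $(x_0,\xi^0)$ with $\xi^0\ne0$ and $(x_0,\xi^0)\notin C\circ\WFH(f)$; I must show $(x_0,\xi^0)\notin\WFH(Af)$. Choose $\eps>0$ so small that the $\eps''$ of the weaker statement satisfies $\eps''<|\xi^0|$, and split $f=f_1+f_2$ with \HPDO\ cutoffs so that $\WFH(f_1)\subset\{|\eta|\le\eps\}$ and $\WFH(f_2)\subset\{|\eta|\ge\eps/2\}$. By the weaker statement, $\WFH(Af_1)\subset\{|\xi|\le\eps''\}\cup(\R^n\times\{0\})$, which excludes $(x_0,\xi^0)$. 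The family $f_2$ has $\WFH(f_2)$ compact and bounded away from $\xi=0$; then the relevant values of $\Theta$ are confined to a compact set away from the origin, on which $a(x,y,\Theta/h)$ is a genuine semiclassical amplitude, so $A$ acts on $f_2$ as an $h$‑FIO with canonical relation $C$, and the known mapping property of $h$‑FIOs on $\WFH$ (see \cite{Zworski_book}) gives $\WFH(Af_2)\setminus0\subset C\circ\WFH(f_2)\subset C\circ\WFH(f)$, which also excludes $(x_0,\xi^0)$. Since $Af=Af_1+Af_2$ and $\WFH$ is subadditive, $(x_0,\xi^0)\notin\WFH(Af)$, which proves \r{FIO1}. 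I expect the non‑stationary phase estimate in the weaker statement to be the main obstacle: it is the analogue of the corresponding step in the proof of Theorem~\ref{thm_PDO} but genuinely more delicate, because of the extra phase variables $\theta$ and the need to balance the loss of $h^{-1}$ per $\theta$‑derivative of the rescaled amplitude against the gain from integration by parts, uniformly down to $\Theta=0$ and out to $|\Theta|=\infty$.
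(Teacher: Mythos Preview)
Your proposal is correct and follows essentially the same route as the paper: localize to an oscillatory integral, rescale $\theta\mapsto\Theta/h$ so that the amplitude becomes a \sc\ one on $\{|\Theta|\ge\eps\}$, invoke the $h$-FIO mapping property there, and handle the zero section by the same $f=f_1+f_2$ splitting used in Theorem~\ref{thm_PDO}. The paper's proof is terser --- it simply says ``the rest of the proof is as the proof of Theorem~\ref{thm_PDO} using the non-degeneracy of the phase'' --- whereas you have spelled out the non-stationary phase step for the weak near-zero statement and the bounded-ratio property $|\xi|\lesssim|\eta|$ on $C$ that replaces the exact preservation of $|\xi|$ in the \PDO\ case; this is the right adaptation and the paper leaves it implicit.
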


\begin{proof}
The statement holds for $h$-FIOs, see, e.g, \cite{Guillemin-SC}. When $A$ is a classical FIO, $A$ can be written locally as
\[
Af(x) = (2\pi)^{-(n_1+n_2+2N)/4} \iint e^{\i\phi(x,y,\theta)}a(x,y,\theta)f(y)\,\d y\,\d\theta,
\]
modulo a smoothing operator, where $a$ is an amplitude of order $m+(n_1+n_2-2N)/4$ and $\phi$ is a non-degenerate phase function, see \cite[Chapter~25.1]{Hormander4} and $\theta\in\R^N$. As in the proof of Theorem~\ref{thm_PDO} above, we can express 
$Af$ as an oscillatory integral with a phase function $\phi(x,y,\theta)/h$ and an amplitude $a(x,y,\theta/h)$. The latter is a \sc\ amplitude for $|\theta|>\eps>0$ and $0<h<\eps$. The rest of the proof is as the proof of Theorem~\ref{thm_PDO} using the non-degeneracy of the phase.
\end{proof}

 As above,  we also proved that for every $\eps>0$, every classical FIO is  also a \sc\ one restricted to $f_h$ with $\WFH(f)$ not containing $\xi$ with $|\xi|\le\eps$. Finally, if $F$ has a left parametrix which is also an FIO, then \r{FIO1} is an equality. This happens, for example, if $C$ is locally a graph of a diffeomorphism and $A$ is elliptic. It also happens when $C$ satisfies the clean intersection condition, which is the case for the geodesic X-ray transform in dimensions $n\ge3$ with no conjugate points. 

Finally, we want to emphasize that while $\WFH(f)$ is just a set, the theorem (as typical  for such statements) gives us more than recovery of that set. If we know $Af=m$ up to an $O_\mathcal{S}(h^\infty)$ error, and $f_1$ and $f_2$ are two possibly different solutions, then $A(f_1-f_2)$ has an empty \sc\ wave front set; and if $A$ is, say elliptic and associated to a local diffeomorphism, then $\WFH(f_1-f_2)$ can only be contained in the zero section, i.e., we can recover $f$ microlocally, away from $\xi=0$; not just $\WFH(f)$.

\section{Sampling theorems}
\subsection{Sampling on a rectangular  grid} 
    We start with a version of the classical Nyquist--Shannon sampling theorem which allows for  oversampling. Below, $B>0$ is a fixed constant. 

\begin{theorem}\label{thm_sampling}
Let $f\in L^2(\R^n)$. Assume that $\supp \hat f\subset [-\Xi,\Xi]^n$ and let $0<s\le \pi/\Xi$. Let $\hat \chi\in L^\infty$ be supported in $[-1,1]^n$ and equal to $\pi^n$ on $B^{-1} .\supp\hat f$. If $ s\le  \pi/B$, then $f$ can be reconstructed by its samples $f(sk)$, $k\in\mathbf{Z}^n$   by
\be{2.1}
f(x) = \sum_{k\in\mathbf{Z}^n} f(sk) \chi\left(\frac{\pi}{s}(x-sk)\right),
\ee
and 
\be{2.2}
\|f\|^2 = s^n\sum_{k\in\mathbf{Z}^n} |f(sk)|^2.
\ee
\end{theorem}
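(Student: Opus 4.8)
The plan is to transfer everything to the Fourier side and reduce the statement to ordinary Fourier series on the cube $Q_s:=[-\pi/s,\pi/s]^n$; normalize $\hat f(\xi)=\int f(x)e^{-\i x\cdot\xi}\,\d x$, so that $f(x)=(2\pi)^{-n}\int\hat f(\xi)e^{\i x\cdot\xi}\,\d\xi$. First I would form the $(2\pi/s)\mathbf{Z}^n$-periodization $g(\xi):=\sum_{l\in\mathbf{Z}^n}\hat f\big(\xi+(2\pi/s)l\big)$. Since $\hat f$ is supported in the box $[-\Xi,\Xi]^n$ of side $2\Xi\le 2\pi/s$ --- this is exactly where $s\le\pi/\Xi$ is used --- the translates in this sum have pairwise disjoint supports, so $g\in L^2_{\mathrm{loc}}$, $g$ is $(2\pi/s)\mathbf{Z}^n$-periodic, and $g=\hat f$ on $Q_s$ (which contains $\supp\hat f$).

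Next I would expand $g$ in its Fourier series on $Q_s$, namely $g(\xi)=\sum_{k\in\mathbf{Z}^n}c_k e^{-\i s k\cdot\xi}$ with $c_k=(s/2\pi)^n\int_{Q_s}g(\xi)e^{\i sk\cdot\xi}\,\d\xi$. Because $\hat f$ is supported in $Q_s$ and equals $g$ there, this is $(s/2\pi)^n\int_{\R^n}\hat f(\xi)e^{\i sk\cdot\xi}\,\d\xi=s^nf(sk)$ by Fourier inversion; the interchanges of sum and integral are justified since $f\in L^2$ and $\supp\hat f$ is compact. Parseval for the Fourier series on $Q_s$ then gives $\int_{Q_s}|g|^2=(2\pi/s)^n\sum_k|c_k|^2$, and since $\int_{Q_s}|g|^2=\int_{\R^n}|\hat f|^2=(2\pi)^n\|f\|^2$, this is precisely \r{2.2}; in particular $\{f(sk)\}_k\in\ell^2$.

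For the reconstruction \r{2.1}, note that $\hat\chi\in L^\infty$ with compact support gives $\chi\in L^2$, and a change of variables yields $\widehat{\chi(\tfrac\pi s(\cdot-sk))}(\xi)=(s/\pi)^n e^{-\i sk\cdot\xi}\hat\chi(\tfrac s\pi\xi)$. Hence the series $\sum_k f(sk)\,\chi(\tfrac\pi s(\cdot-sk))$ has Fourier transform $(s/\pi)^n\hat\chi(\tfrac s\pi\xi)\sum_k s^n f(sk)e^{-\i sk\cdot\xi}=\pi^{-n}\hat\chi(\tfrac s\pi\xi)\,g(\xi)$, and it converges in $L^2(\R^n)$ because multiplication by the bounded, compactly supported factor $\hat\chi(\tfrac s\pi\xi)$ carries the periodic $L^2(Q_s)$-function $\sum_k c_k e^{-\i sk\cdot\xi}$ into $L^2(\R^n)$. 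It remains to check $\pi^{-n}\hat\chi(\tfrac s\pi\xi)\,g(\xi)=\hat f(\xi)$: for $\xi\notin Q_s$ both sides vanish (the left since $\hat\chi$ is supported in $[-1,1]^n$, the right since $\supp\hat f\subset Q_s$), while for $\xi\in Q_s$ we have $g(\xi)=\hat f(\xi)$, so it suffices that $\hat\chi(\tfrac s\pi\xi)=\pi^n$ on $\supp\hat f$. For $\xi\in\supp\hat f$ write $\tfrac s\pi\xi=B^{-1}\big(\tfrac{sB}\pi\xi\big)$ with $0\le\tfrac{sB}\pi\le 1$ (using $s\le\pi/B$); as $\supp\hat f$ is star-shaped about the origin --- a box or a ball in the intended applications --- the point $\tfrac{sB}\pi\xi$ lies in $\supp\hat f$, so $\tfrac s\pi\xi\in B^{-1}\supp\hat f$, where $\hat\chi=\pi^n$ by hypothesis.

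The argument is entirely classical, so no single step is genuinely hard; the one place deserving care is this last computation, the bookkeeping of the dilation factors showing $\tfrac s\pi\supp\hat f\subset B^{-1}\supp\hat f$, since this is where the hypothesis $s\le\pi/B$ (rather than just $s\le\pi/\Xi$) enters and where one tacitly uses star-shapedness of $\supp\hat f$ about $0$. All the remaining ingredients --- Fubini, Parseval for Fourier series, and the elementary behaviour of the Fourier transform under dilation and translation --- are routine.
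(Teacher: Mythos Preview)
Your proof is correct and follows the same route as the paper: periodize $\hat f$ with period $2\pi/s$, expand in Fourier series on the fundamental cube (obtaining coefficients $s^n f(sk)$), multiply by $\pi^{-n}\hat\chi(s\xi/\pi)$ to undo the periodization, and Fourier-invert; Parseval for the series gives \r{2.2}. The paper's proof is three lines and you have simply filled in the verifications it omits.

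One clarification: in the paper's source, $\Xi$ is redefined to be the macro $B$, so there is only a single band-limit parameter and the two hypotheses $s\le\pi/\Xi$ and $s\le\pi/B$ that you distinguish are in fact the same inequality. Your careful separation of ``where each condition enters'' is therefore moot, though harmless. Your star-shapedness remark, on the other hand, is a genuine fine point that the paper glosses over: to get $\hat\chi(s\xi/\pi)=\pi^n$ on $\supp\hat f$ from the stated hypothesis one does need $(sB/\pi)\supp\hat f\subset\supp\hat f$, which is automatic only when $\supp\hat f$ is star-shaped about the origin. This holds in the intended applications (boxes, balls), and at the critical rate $s=\pi/B$ the issue vanishes altogether.
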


\begin{proof}
Since $\hat f$ is  supported in $[-\Xi,\Xi]^n$, it is also supported in $[-\pi/s, \pi/s]^n$. Then we can take the $2\pi/s$  periodic extension $\hat f_\text{ext}$ of $\hat f$ in all variables and then the (inverse) Fourier series of that extension to get
\be{2.2b}
\hat f_\text{ext}(\xi) =   s^n \sum_k f(sk)e^{-\i s\xi\cdot k}.
\ee
Multiply this by $\pi^{-n}\hat \chi(s\xi/\pi)$ to get 
\be{2.2c}
\hat f(\xi) = (s/\pi)^n \hat \chi(s\xi/\pi)  \sum_k f(sk)e^{-\i s\xi\cdot k}.
\ee
Take the inverse Fourier Transform to get \r{2.1}. Equality \r{2.2} is just Parseval's equality applied to \r{2.2b}.
\end{proof}

Note that when $\hat\chi/\pi$ is the characteristic function of $[-1,1]$ in 1D, we get $\chi(x)= \sinc(x):=  \sin x/x$. In higher dimensions, we get a product of such functions. 
When $\supp \hat f\subset (-B,B)$, one can choose $\hat\chi\in C_0^\infty$, which makes the series \r{2.1} rapidly convergent. Even a piecewise linear $\hat \chi$ will increase the convergence rate: if for $n=1$ we choose $\hat\chi$ to have a trapezoidal graph by defining it as linear in $[\delta,1]$ for some $\delta\in (0,1)$ (and continuous everywhere), then 
\[
\chi(x) = \frac{\cos x-\cos(\delta x)}{(1-\delta)x^2} 
\]
which is $O_\delta(x^{-2})$ instead of just $O(|x|^{-1})$ as $\sinc$, with a constant getting large when $\delta$ is close to~$1$. 
Theorem~\ref{thm_sampling} says that the sampling rate should not exceed $\pi/B$, which is known as the Nyquist limit. The theorem can be extended to classes of non $L^2$ functions and then we need the sampling rate to be strictly below the Nyquist one. If $f=\sin(x)$ for example, $B=1$ is the sharpest band limit and a sampling rate of $\pi$ would yield zero values. On the other hand, that function is not in $L^2$. Sampling with a smaller step recovers that $f$ uniquely in the corresponding class.

\begin{remark}\label{rem_unit}
It is easy to see that the subspace $L_B^2$ of $L^2$ consisting of functions $f$ with $\supp \hat f\subset [-B,B]^n$ is a Hilbert space itself. If we consider the samples $f(sk)$ as elements of $\ell^2_s(\mathbf{Z}^n)$ with measure $s^n$ (i.e., sequences $a=\{a_k\}_{k\in\mathbf{Z}^n}$ with $\|a\|^2=s^n\sum_k|a_k|^2$), then the theorem implies that the map
\[
L_B^2\ni f\Longrightarrow f(sk)\in \ell^2_s(\mathbf{Z}^n)
\]
given by \r{2.1} with $s=\pi/B$ and $\chi=\sinc$, is unitary; i.e., not just an isometry but a bijective one because one can easily show that for every choice of the sampled values in $\ell^2_s(\mathbf{Z}^n)$ there is unique $f\in L_B^2$ with those values. 
Therefore, the set of the samples is not an overdetermined one for a stable recovery. Also, \r{2.1} is just an expansion of $f$ in an orthogonal basis. 

\end{remark}
 
There are several direct generalizations possible. First, one can have different band limits for each component: $|\xi_j|\le B_j$ on $\supp\hat f$. Then we need  $s_jB_j/\pi\le\delta<1$. One can have frequency support in non-symmetric intervals but since we are interested mostly in real valued functions, we keep them symmetric. 

The next observation is that the set containing  $\supp\hat f$ does not have to be a box. Of course, if it is bounded, it is always included in some box but more efficient sampling can be obtained if we know that the group generated by the shifts $\xi_j\mapsto \xi_j+2B_j$, $j=1,\dots,n$ maps $\supp \hat f$ into mutually disjoint sets. Then we take the periodic extension of $\hat f$ w.r.t.\ that group. The requirement for $\hat\chi$ then is to have disjoint images under those shifts and to be equal to $1$ on $\supp\hat f$; then the step from \r{2.2b} to \r{2.2c} works in the same way. Note that $\hat\chi$  may not be  contained in $\prod [-B_j,B_j]$. 

Finally, one may apply  general non-degenerate linear transformation as in \cite{PetersenM}. It is well known that the change $x=Wy$ with $W$ an invertible matrix triggers the change $\xi=(W^*)^{-1}\eta$ of the dual variables. Let $f$  be  band limited in $\mathcal{B}$. 
Assume that $W$ is such that the images of $\mathcal{B}$ under the translations $ \xi\mapsto \xi+2\pi  (W^*)^{-1}k $, $k\in\mathbf{Z}^n$ are mutually disjoint. Then in the $y$ variables, the shifts of $\supp \hat f$ by the group $\eta\mapsto 2\pi\mathbf{Z}^n$ are mutually disjoint (i.e., for $g(y):=f(Wy)$,  we have that for $\hat g(\eta)$). 
 Then $g$ is  band limited in a box with a half-side  $B=\pi$ and then for every $s\in (0,1]$,  $f$ is  stably  determined by its samples $f(sWk)$, $k\in\mathbf{Z}^n$. Moreover, the  reconstruction formula  \r{2.1}  still holds with the requirement that $\chi=1$ on $\supp\hat f$ and $\supp\chi$ has disjoint images under the translations by that group; there is an additional $|\det W|$ factor (see also Theorem~\ref{thm_sc}) coming from the change of the variables.

We present next a \sc\ version of the sampling theorem. It can be considered as an approximate  rescaled version of the classical theorem when the conditions are approximately held,  with an error estimate. We present a version with a uniform estimate of the error and in the corollary below, we formulate a corollary without that uniformity but which requires $f_h$ to be \sc ly band limited (only), i.e., it applies to a single $f_h$. 



\begin{theorem}\label{thm_sc}
Assume that $\Omega\subset \R^n$, 
$\bar{\mathcal{B}}_0\subset \mathcal{B}\subset\R^n$ with $\Omega$,  $\mathcal{B}_0$, $\mathcal{B}$ open and bounded. Let $f_h\in C_0^\infty(\Omega)$ satisfy 
\be{loc1}
\|(\Id - \psi(x,hD))f_h\|_{H_h^m}=O_m(h^\infty)\|f_h\|,\quad \forall m\gg0
\ee 
with some $m$,  $\psi\in C_0^\infty(\R^{2n})$ so that $\psi(x,\xi)=1$ for $\xi \in \mathcal{B}_0$ and  $\psi(x,\xi)=0$ for $\xi\not\in\mathcal{B}$. Let $\hat \chi\in C_0^\infty(\R^n)$ be so that $\supp\hat\chi\in\mathcal{B}$ and $\hat \chi(\xi)\psi(x,\xi)=\psi(x,\xi)$. 

Assume that $W$ is an invertible matrix so that the images of $\mathcal{B}$ under the translations  $ \xi\mapsto \xi+2\pi  (W^*)^{-1}k $, $k\in\mathbf{Z}^n$, are mutually disjoint. 
Then  for every $s\in (0,1)$, 
\be{2.3'}
f_h(x) = |\det W| \sum_{k\in \mathbf{Z}^n} f_h(shWk)  \chi\left(\frac{\pi}{sh}(x-shWk)\right) + O_{H^s}(h^\infty)\|f\|_{L^2},
\ee
and 
\be{2.4'}
\|f_h\|_{L^2}^2 = |\det W|(sh)^n\sum_{k\in \mathbf{Z}^n} |f_h(shWk)|^2 + O(h^\infty)\|f\|_{L^2}^2.
\ee
\end{theorem}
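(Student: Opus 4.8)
The plan is to obtain Theorem~\ref{thm_sc} as a \emph{rescaling} of the classical sampling theorem: under the substitution $\xi\mapsto\xi/h$ the statement turns into Theorem~\ref{thm_sampling} in the linear-transformation / non-rectangular-support form discussed right after it, applied to a genuinely band-limited family. The only obstruction to invoking that theorem verbatim is that $\mathcal{F}_hf_h$ is merely essentially, not exactly, supported in $\mathcal{B}$; so the first step is to replace $f_h$ by $\tilde f_h:=\hat\chi_0(hD)f_h$, where $\hat\chi_0\in C_0^\infty$ is chosen with $\hat\chi_0\equiv1$ near $\pi_\xi(\supp\psi)$ and $\hat\chi\equiv1$ near $\supp\hat\chi_0$ (an innocuous strengthening of $\hat\chi\psi=\psi$, possible since $\pi_\xi(\supp\psi)$ is a compact subset of $\mathcal{B}$). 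Then $\mathcal{F}_h\tilde f_h=\hat\chi_0\,\mathcal{F}_hf_h$ is compactly supported in $\supp\hat\chi_0\subset\mathcal{B}$, $\tilde f_h\in\mathcal{S}$, and $\hat\chi\equiv1$ on $\supp\mathcal{F}_h\tilde f_h$. I would then prove
\[
f_h-\tilde f_h=(\Id-\hat\chi_0(hD))f_h=O_{H_h^m}(h^\infty)\|f_h\|_{L^2}\quad\text{for every }m,
\]
by splitting it as $(\Id-\hat\chi_0(hD))\psi(x,hD)f_h+(\Id-\hat\chi_0(hD))(\Id-\psi(x,hD))f_h$: the second term is controlled by \r{loc1} together with the uniform $H_h^m$-boundedness of the $S^{0,0}$-multiplier $\Id-\hat\chi_0(hD)$, while the first operator is negligible because, by the $h$-pseudodifferential composition calculus, its full symbol $(1-\hat\chi_0(\xi))\psi(x,\xi)+\sum_{|\alpha|\ge1}c_\alpha h^{|\alpha|}\partial_\xi^\alpha\hat\chi_0(\xi)\partial_x^\alpha\psi(x,\xi)$ vanishes to all orders (the leading term since $\hat\chi_0\psi=\psi$, the rest since $\hat\chi_0\equiv1$ near $\pi_\xi(\supp\psi)$); as $\psi$ is compactly supported in $\xi$, this operator maps $L^2$ into $H_h^m$ with $O(h^\infty)$ norm.

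Next I would carry out the rescaled classical sampling argument on the lattice $W\mathbf{Z}^n$ for $\tilde f_h$, i.e.\ repeat the proof of Theorem~\ref{thm_sampling} with $\hat f\rightsquigarrow\mathcal{F}_h\tilde f_h$, the frequency variable scaled by $1/h$, and $\mathbf{Z}^n$ replaced by $W\mathbf{Z}^n$. Put $L^*:=\tfrac{2\pi}{s}(W^*)^{-1}\mathbf{Z}^n$; it has a fundamental domain $\mathcal{D}$ of volume $|\mathcal{D}|=(2\pi/s)^n/|\det W|$ and dual lattice $(L^*)^\perp=sW\mathbf{Z}^n$, which is exactly the lattice of sampling points $shWk$. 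Because the $2\pi(W^*)^{-1}\mathbf{Z}^n$-translates of $\mathcal{B}$ are pairwise disjoint and $s\in(0,1)$, the $L^*$-translates of $\supp\mathcal{F}_h\tilde f_h\subset\mathcal{B}$ are pairwise disjoint (the dilation by $1/s\ge1$ preserves this packing when $\mathcal{B}$ is convex, which covers all the applications; in general one passes to its convex hull, and the strict oversampling $s<1$ is what leaves room for the smooth cutoff $\hat\chi$). Hence the $L^*$-periodization of $\mathcal{F}_h\tilde f_h$ equals $\mathcal{F}_h\tilde f_h$ on $\mathcal{D}$, its Fourier series over $(L^*)^\perp$ has coefficients proportional to $\tilde f_h(shWk)$, multiplying by $\hat\chi$ (which is $1$ on $\supp\mathcal{F}_h\tilde f_h$ and supported in $\mathcal{D}$) recovers $\mathcal{F}_h\tilde f_h$ exactly, and applying $\mathcal{F}_h^{-1}$ then yields, after collecting the constants from $|\mathcal{D}|$,
\[
\tilde f_h(x)=|\det W|\sum_{k\in\mathbf{Z}^n}\tilde f_h(shWk)\,\chi\!\Big(\tfrac{\pi}{sh}(x-shWk)\Big),\qquad \|\tilde f_h\|_{L^2}^2=|\det W|(sh)^n\sum_{k\in\mathbf{Z}^n}|\tilde f_h(shWk)|^2,
\]
with $\chi$ related to $\hat\chi$ exactly as in Theorem~\ref{thm_sampling}.

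Finally I would substitute $\tilde f_h=f_h-(f_h-\tilde f_h)$ in both identities. Since $f_h\in C_0^\infty(\Omega)$ with $\Omega$ bounded and $h$-independent, only $O((sh)^{-n})$ of the $f_h(shWk)$ are nonzero, so the series in \r{2.3'}, \r{2.4'} are in effect finite sums; smoothness of $\hat\chi$ makes $\chi$ rapidly decreasing, so the synthesis map $\{a_k\}\mapsto\sum_k a_k\chi(\tfrac{\pi}{sh}(\cdot-shWk))$ is bounded from $\ell^2$ with counting measure into $L^2$ with operator norm $O((sh)^{n/2})$ (and its range has Fourier support in $\tfrac{\pi}{sh}\supp\hat\chi$). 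Combining this with the pointwise bound $|(f_h-\tilde f_h)(shWk)|=O(h^\infty)\|f_h\|$ (the $H_h^m$-estimate above plus semiclassical Sobolev embedding) and the square-summability of these values over the $O((sh)^{-n})$ relevant $k$, replacing $\tilde f_h$ by $f_h$ in the two identities of the previous step costs only $O_{H^s}(h^\infty)\|f_h\|_{L^2}$ in \r{2.3'} and $O(h^\infty)\|f_h\|_{L^2}^2$ in \r{2.4'}; for the norm identity one also uses $\|\tilde f_h\|_{L^2}^2=\|f_h\|_{L^2}^2+O(h^\infty)\|f_h\|_{L^2}^2$ from Cauchy--Schwarz. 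This completes the argument.

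The hard part is the first step: the reduction $f_h\rightsquigarrow\tilde f_h$ must be \emph{uniform}, with error proportional to $\|f_h\|_{L^2}$ rather than merely $O(h^\infty)$ for a single fixed family — which is precisely why \r{loc1} carries the factor $\|f_h\|$ and why one needs $\hat\chi_0\equiv1$ on a \emph{neighborhood} of $\pi_\xi(\supp\psi)$ (not just on it), so that the composition symbol is $O(h^\infty)$ and not only $O(h)$. Matching the packing/oversampling condition under the rescaling $\xi\mapsto\xi/h$ in the second step is the other thing to watch, but it is routine once $\mathcal{B}$ is taken convex.
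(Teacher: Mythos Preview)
Your approach is essentially the paper's: replace $f_h$ by a genuinely band-limited $\hat\chi(hD)f_h$ (the paper writes $g_h$ and first reduces to $W=\Id$ by the linear change of variables $x=Wy$), apply the classical sampling theorem exactly after the $h$-rescaling, and control the replacement error via \r{loc1} together with the $h$-\PDO\ composition calculus and Sobolev embedding. Your nested cutoff $\hat\chi_0$ and your convexity/packing remark for the dilation by $1/s$ are refinements of points the paper handles tacitly, not a different route.
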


\begin{proof} As in the remark above, we can make the change of variables $x=Wy$; then the dual variables $\eta$ is related to $\xi$ by $\eta=W^*\xi$. In the new variables, the images of $\mathcal B$ under the translations $\eta\to\eta+2\pi k$ do not intersect. Therefore, it is enough to consider the case $W=\Id$. 

 Set 
\be{2.5'}
g_h(x):= \mathcal{F}_h^{-1}\chi \mathcal{F}_h f_h  .
\ee
Since 
 $\mathcal{F}_hg_h$  is the classical Fourier transform of the rescaled $h^ng_h(hx)$,  by the previous theorem and the remark after it, with $B=\pi$, 
\[
h^ng_h(hx)=    \sum_k h^n g_h(shk) \chi(\pi(x -sk  )/s).
\]
Replace $hx$ by $x$ to get \r{2.3'}  and \r{2.4'} for $g_h$ without the error terms, i.e.,
\be{2.5''}
g_h(x) = \sum_{k\in \mathbf{Z}^n} g_h(shk)  \chi\left(\frac{\pi}{sh}(x-shk)\right), \quad \|g_h\|_{L^2}^2 = (sh)^n\sum_{k\in \mathbf{Z}^n} |g_h(shk)|^2 . 
\ee

To estimate the error, write
\[
\begin{split}
r_n:&=f_h-g_h = \mathcal{F}_h^{-1}(1-\chi) \mathcal{F}_h f_h  = (\Id-\chi(hD))f_h\\
& = 
 (\Id-\chi(hD))\left( \psi(x,D)f_h+\tilde r_h\right),
\end{split}
\]
where $\tilde r_h$ satisfies \r{loc1}. Therefore, $r_n$ satisfies the same error estimate. 
Using Sobolev embedding, we get $r_n(shk)=O(h^\infty)\|f\|$ for $shk$ in any fixed neighborhood of $\bar\Omega$. Outside of it, $r_h=g_h$ is in the Schwartz class by \r{2.5'} with every seminorm bounded by $C_Nh^N\|f\|$, $\forall N$. This implies that replacing $g_n(shk)$ by $f_n(shk)$ in the interpolation formula in \r{2.5''} results in an $O(h^\infty)$ error in every $H^s_h$. We already established such an error estimate if we replace $g_h$ by $f_h$ on the left. The Parseval's identity  \r{2.4'} follows from this as well. 

This completes the proof. 
\end{proof}


In particular, when we are given a single \sc ly band limited $f_h$, we have the corollary below. Note that estimates \r{WFH},  \r{infWF}, \r{loc}, and \r{defB} are not formulated uniformly in $f_h$; and on the other hand, estimate \r{WFH} is the standard definition of the \sc\ wave front set.  Theorem~\ref{thm_sc} has uniform estimates of the error but requires the stronger condition \r{loc1}. Also, one can have the \sc\ band limited assumptions as in the corollary below but still the more general sampling geometry as in  Theorem~\ref{thm_sc}.

\begin{corollary}\label{cor_sc}
Let $f_h$ be \sc ly band limited with $\Sigma_h(f) \subset  \prod(-B_j,\B_j)$. Let $\hat \chi_j \in C_0^\infty(\R)$ be supported in $(-1,1)^n$ and $\hat \chi_j(\xi_j/B_j)=\pi$ for $\xi\in \Sigma_h(f) $. If $0<s_j \le \pi/B_j$, then 
\be{2.3}
f_h(x) = \sum_{k\in \mathbf{Z}^n} f_h(s_1 hk_1,\dots, s_nhk_n) \prod_j \chi_j\left(\frac{\pi}{s_j h}(x-s_jhk)\right) + O_{\mathcal{S}}(h^\infty), 
\ee
and 
\be{2.4}
\|f_h\|^2 = (sh)^n\sum_{k\in \mathbf{Z}^n} |f_h(shk)|^2 + O(h^\infty). 
\ee
\end{corollary}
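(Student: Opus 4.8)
The plan is to derive Corollary~\ref{cor_sc} from Theorem~\ref{thm_sc} by checking that a single \sc ly band limited $f_h$ satisfies the hypotheses of that theorem, at the cost of weakening the uniform error $O_{H^s}(h^\infty)\|f\|$ to the non-uniform $O_{\mathcal S}(h^\infty)$. Concretely, I would take $W=\mathrm{diag}(s_1/s,\dots,s_n/s)$ for any fixed $s\in(0,1)$ with $s\le \min_j s_j B_j/\pi\cdot(\text{something})$; but it is cleaner to avoid the single $s$ of Theorem~\ref{thm_sc} and instead re-run its proof directly with the anisotropic lattice, since the corollary allows different steps $s_j$ in each coordinate. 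So I would split the argument into two moves: first reduce to the product-of-intervals case by the elementary rescaling $x_j\mapsto x_j$, $\xi_j\mapsto \xi_j$ already discussed after Theorem~\ref{thm_sampling} (different band limits $B_j$ per coordinate, requiring $s_jB_j/\pi<1$, or $\le 1$ in the $L^2$ setting), and second handle the passage from "uniform in a family" to "single $f_h$".

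For the first move: by Proposition~\ref{pr_bl}, a \sc ly band limited $f_h$ is localized in phase space, so there is $\psi=\chi_1(x)\chi_2(\xi)\in C_0^\infty(\R^{2n})$ with $(\Id-\psi(x,hD))f_h=O_{\mathcal S}(h^\infty)$, where $\chi_2=1$ near $\overline{\Sigma_h(f)}$ and $\supp\chi_2\subset\prod(-B_j,B_j)$ (this uses $\Sigma_h(f)\subset\prod(-B_j,B_j)$ with strict inequality, which is exactly the hypothesis). Then, exactly as in \r{2.5'}, I would set $g_h=\mathcal F_h^{-1}\chi\,\mathcal F_h f_h$ with $\chi=\prod_j\chi_j$ as in the statement (so $\hat\chi=1$ on $\Sigma_h(f)$, $\supp\hat\chi\subset\prod(-B_j,B_j)$, hence $\hat\chi\psi=\psi$). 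Since $\mathcal F_h g_h(\xi)=\widehat{h^n g_h(h\cdot)}(\xi)$ is classically supported in $\prod(-B_j,B_j)$, the classical anisotropic sampling theorem (the per-coordinate generalization of Theorem~\ref{thm_sampling}, valid since $s_jB_j/\pi\le1$) gives the interpolation and Parseval identities for $g_h$ on the lattice $\{(s_1hk_1,\dots,s_nhk_n)\}$ with no error term.

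For the second move, which is the bulk of the estimate: write $r_h=f_h-g_h=(\Id-\chi(hD))f_h=(\Id-\chi(hD))\big(\psi(x,hD)f_h+\tilde r_h\big)$ with $\tilde r_h=O_{\mathcal S}(h^\infty)$; since $\hat\chi\psi=\psi$, the \HPDO\ symbolic calculus (composition, as in Theorem~\ref{thm_PDO}) shows $(\Id-\chi(hD))\psi(x,hD)$ has symbol in $S^{-\infty,\infty}$ on the support of $f_h$, so $r_h=O_{\mathcal S}(h^\infty)$. Sobolev embedding then gives $r_h(s_1hk_1,\dots)=O(h^\infty)$ uniformly over the lattice points in any fixed neighborhood of $\overline\Omega$ (where $\Omega\supset\supp f_h$ is $h$-independent), and outside that neighborhood $r_h=g_h$ is Schwartz with all seminorms $O(h^\infty)$; summability over $k$ costs only a fixed polynomial factor absorbed into $O(h^\infty)$. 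Hence replacing $g_h(s_1hk_1,\dots)$ by $f_h(s_1hk_1,\dots)$ in the interpolation series changes it by $O_{\mathcal S}(h^\infty)$, and replacing $g_h$ by $f_h$ on the left-hand side costs another $O_{\mathcal S}(h^\infty)$; the same bookkeeping applied to \r{2.5''} yields \r{2.4}.

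**Main obstacle.** The only genuinely delicate point is making the error term $O_{\mathcal S}(h^\infty)$ rather than merely $O(h^\infty)$ in some weaker norm, and in particular controlling the interpolation \emph{series} $\sum_k r_h(s_jhk)\chi_j(\cdots)$: the $\chi_j=\sinc$-type factors decay only like $|x|^{-1}$, so naive summation does not converge absolutely in $\mathcal S$-seminorms. I would handle this exactly as Stefanov does in the proof of Theorem~\ref{thm_sc} --- the point is that the tail values $r_h(s_jhk)=g_h(s_jhk)$ for lattice points far from $\overline\Omega$ are themselves Schwartz-small in $k$ (being samples of the Schwartz function $g_h$), so the series splits into a finite-support piece near $\overline\Omega$ (each term $O_{\mathcal S}(h^\infty)$, finitely many of them) plus a rapidly convergent tail; both are $O_{\mathcal S}(h^\infty)$. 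This is routine once the decomposition is set up, but it is where the care lies; everything else is a rescaling of the classical theorem.
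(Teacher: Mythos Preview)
Your proposal is correct and is essentially the paper's own argument. The paper's proof is terser: it simply takes $W=\mathrm{diag}(\pi/B_1,\dots,\pi/B_n)$ and invokes Theorem~\ref{thm_sc} directly, identifying the product $\hat\chi$ there with $\prod_j\hat\chi_0(\xi_j/B_j)$ and reading off the $\chi_j$. You instead re-run the proof of Theorem~\ref{thm_sc} in the anisotropic setting, which is the same mechanism (define $g_h$ by \r{2.5'}, apply the classical sampling theorem to $g_h$, control $r_h=f_h-g_h$ via the \HPDO\ calculus and Sobolev embedding). Your version has the minor advantage that it handles the case of independently chosen $s_j$ (with possibly different ratios $s_jB_j/\pi$) without having to thread them through a single scalar $s$ and a diagonal $W$, a point on which the paper's two-line proof is slightly loose; but the substance is identical.
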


\begin{proof}
Take $\Omega\supset\supp f$, $W=\text{diag}(\pi/B_1,\dots,\pi/B_n)$. Then $\det W =\pi^n/(B_1\dots B_n)$. 
If $\delta<1$ is close enough to $1$, then the conditions of Theorem~\ref{thm_sc} are satisfied and we can take $\hat \chi$ there (depending on $\xi\in\R^n$) to be the product $\hat \chi_0 (\xi_1/B_1)\dots\hat \chi_0 (\xi_1/B_n)$, where $\chi_0$ (depending on an 1D variable) is as in the corollary but related to $B_j=1$.  Set $\hat \chi_j(\xi)=\hat \chi_0(\xi/B_j)$; then $\chi_j(x) = B_j\chi_0(B_jx)$. On the other hand, having $\chi_j$, we can compute $\chi_0$. Then Theorem~\ref{thm_sc} implies the corollary. 
\end{proof}

In other words, if the sampling rate $sh$ is smaller  than $\pi h/\Xi$, we have an accurate recovery. Note that the sampling rate $s$ is rescaled by $h$ compared to Theorem~\ref{thm_sampling}.  We call $s$ a relative sampling rate.

\begin{remark}\label{remark_W}
Let $\supp f\in \Omega\Subset\R^n$ be $B$-band limited as in the corollary. Then  the smallest subset in the phase space $T^*\Omega$ containing $\WFH(f)$ for all such $f$'s is $\bar\Omega\times [-B,B]^n$. Its volume with respect to the volume form $\d x\,\d\xi$ is
\[
\Vol\left(\bar\Omega\times [-B,B]^n\right) = (2B)^n\Vol(\Omega). 
\]
 The number of samples we need for $f$ is bounded from below by 
\be{2.7}
\text{ $\#$ of samples}=    \frac{\Vol(\Omega)}{(\pi h/B)^n}= (2\pi h)^{-n}\Vol\left( \bar\Omega\times [-B,B]^n\right)
\ee
up to an $o(1)$ relative error as it follows by comparing the volume of $\Omega$ with the number of the lattice points we can fit in it. The factor $(2\pi h)^{-n}$ is sometimes multiplied by $\d\xi$ to define a natural rescaled measure $(2\pi h)^{-n}\d\xi$ in the $\xi$ space. Therefore, we see that the asymptotic number of samples needed to resample such $f$'s has a sharp lower bound equal to the phase volume occupied by $f$ in the phase space w.r.t.\ to that \sc\ measure. Note that this is sharp when applied  to $f$'s  with $\WFH(f)$ contained in the product of $\Omega$ an a box in the $\xi$ variable. This is a version of the lower bound in \cite{Landau-sampling} in  our setting, which we prove in Theorem~\ref{thm_Landau}. 
\end{remark}

\subsection{Non-linear transformations and non-uniform sampling}\label{sec_non-un}
 Non-uniform sampling in dimensions $n\ge1$ is not well understood unless we sample on Cartesian products of non-uniform 1D grids. We  mention here some easy to obtain but not very far reaching extensions. If $x=\phi(y)$ is a diffeomorphism, one may try to obtain sampling theorems for $f(x)$ by applying the results above to $g(y):=f(\phi(y))$ if the latter happens to be band-limited. Then the sampling points $y_k$, $k\in\mathbf{Z}^n$ will give us sampling points $x_k=\phi(y_k)$. This is what we did above with $\phi(y)=Wy$ linear. Then we first choose $V$ so that the translates of $\supp \hat f$ (or $\Sigma_h(f)$) under the group $\eta\to \eta+V\mathbf{Z}^n$ are mutually disjoint and find $W$ from the equation $V=2\pi(W^*)^{-1}$. Let us consider the \sc\ sampling theorem now. The analog of this for non-linear transformations is the well known property that $\WFH(f)$ transforms as a set of covectors, i.e., $\xi=((\d\phi)^*)^{-1}\eta$.  Now, if we want to prescribe $\d\phi(y)$ pointwise (at $y=\phi^{-1}(x)$ for every $x$) based on an a priori knowledge of $\WFH(f)$, we run into the problem that the equation $\d\phi=\Phi$ with $\Phi$ given is not solvable unless $\d\Phi=0$ (i.e., the matrix-valued map $\Phi$ is curl-free). On the other hand, we can take a partition of unity and on each covering open set, we can take a linear transformation giving is better sampling locally. In Section~\ref{sec_R_PG}, we prove a lower bound for the number of sampling points which is sharp at least in the situation of Theorem~\ref{thm_sc}. 

\subsection{Aliasing.} \label{sec_alias}
Aliasing is a well known phenomenon in classical sampling theory when there Nyquist limit condition is not satisfied (i.e., we have undersampling). For simplicity, we will recall the basic notions when the sampling is done on a rectangular grid but the more general periodic case in Theorem~\ref{thm_sc} can be handled similarly.

Assume that we sample each $x^j$-th variable of $f(x)$ with a steps $sh$. Different steps $s_j$ can be handled easily with a diagonal linear transformation.  We do not assume that this steps satisfy the Nyquist condition in Corollary~\ref{thm_sc}. 
Assume also that we use formula \r{2.3}.  
Following the proof of Theorem~\ref{thm_sampling}, \r{2.2b} 
still holds but the periodic extension $\hat f_\text{ext}$ consists of a superposition of periodic shift of $\hat f_h$ w.r.t.\ to the group $(2\pi/s)\mathbf{Z}^n$:
\be{A1}
\hat f_{\text{ext}}(\xi) = \sum_{k\in\mathbf{Z}^n}\hat f (\xi+(2\pi/s)k).
\ee
Now they can overlap and the restriction of $\hat  f_{\text{ext}}$ to $[-\pi/s,\pi/s]^n$ is not $\hat f$ anymore.  Then \r{2.2c} needs to be corrected by replacing the l.h.s.\ by $\hat \chi(s\xi/\pi) \hat  f_{\text{ext}}(\xi)$. Then the proof of Theorem~\ref{thm_sc} shows that the r.h.s.\ of \r{2.3} approximates not $f_h$ but of 
\be{A2}
G f_h := \mathcal{F}_h^{-1} \hat\chi(s\cdot/\pi)(\mathcal{F}_h f_h)_\text{ext}. 
\ee
In particular, high (outside $[-\pi/s,\pi/s]^n$) frequencies $\xi$ will be shifted by $(2\pi/s)k$ for some $k\in\mathbf{Z}^n$ so they land in that box and their amplitudes will be added to the ones with that actual frequency  there. This is known as ``folding'' of frequencies. Note also that for $f_h$ real valued, $\WFH(f)$ and $\Sigma_h(f)$ are even, i.e., symmetric under the transform $\xi\mapsto-\xi$. 

In fact, $ G=\sum_{k\in\mathbf{Z}} G_k$, where each $G_k$ is an h-FIO with a canonical relation given by the shifts 
\be{sh}
S_k:(x,\xi) \longmapsto    (x,\xi+{2\pi} k/s).
\ee
Indeed, we have
\be{A9}
\begin{split}
G_kf(x) &= (2\pi h)^{-n} \int e^{\i(x-y)\cdot\xi/h - 2\pi \i k\cdot y/sh} \hat\chi(s\xi/\pi) f(y)\, \d y\,\d\xi\\
 &= e^{- 2\pi \i k\cdot x/sh}   \hat\chi(shD/\pi +2 k)f.
\end{split}
\ee
This FIO view of aliasing will be very helpful later. 

 In applications, other reconstructions are used, for example splines. Without a proof, we will mention that the aliasing artifacts are similar then. 

As an example, we plot the function $f_h$ consisting of a sum of the real parts of a sum of two coherent states, see \r{coh}, with $h=0.01$ and $h=0.04$, respectively and unit $\xi_0$'s, on the rectangle $[-1,1]^2$. If we take $h=0.1$ as the small parameter, then the higher frequency state requires a sampling rate $sh$ with $s<\pi$, therefore, it needs more than $2/(\pi h)\approx 64$ sampling points in   each variable. The lower frequency set requires about $1/4$ of that. In Figure~\ref{fig_aliasing}, we sample on $81\times 81$ point grid fist, and then on a $41\times 41$ one next. In the first cases, both patterns are oversampled, while in the second one, only one is, and the other one is undersampled. The reconstructed images, using the {\tt lanczos3}  interpolation in MATLAB (instead of  \r{2.3}) are shown; the aliasing of the smaller pattern changes  the direction and the magnitude of the frequency. In the third and the fourth plots, we show the absolute values of the Fourier transforms of both images: the oversampled one is plotted in its Nyquist box while the white box is the Nyquist box of the undersampled one next to it. The absolute value Fourier transform of the undersampled image is plotted last. One can see that the frequencies outside the Nyquist box in the previous case have shifted by $(0,-2B)$ and $(0,2B)$, where $2B$ is the side of the wide box (with the so chosen $h$, we have $B=1$). 

\begin{figure}[h!] 
  \centering
	\includegraphics[trim = 0 0 0 0, clip,height=.15\textheight]{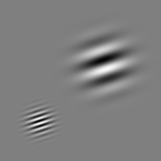}\ \ \ \ \ \ 
  \includegraphics[trim = 0 0 0 0, clip, height=.15\textheight]{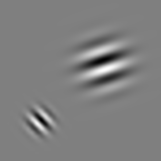}\ \ \ \ \ \ 
    \includegraphics[trim = 0 0 0 0, clip, height=.12\textheight]{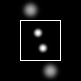}     \ \ \ \ \ 
    \includegraphics[trim = 0 0 0 0, clip, height=.12\textheight]{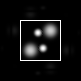}
\caption{\small From left to right: (a) the original $f_\text{o}$; (b) $f_\textrm{u}$ with a lower sampling rate:  the lower left pattern is undersampled, the larger one is oversampled; (c) $|\hat f_\text{o}|$ with the white box representing the Nyquist limit in the next plot; (d) $|\hat f_\text{u}|$, the white box is the Nyquist limit. }
\label{fig_aliasing}
\end{figure}

\section{Sampling classical FIOs} We are ready to formulate the main results of this paper. We label them by (i), (ii), (iii) and (iv) as in the Introduction. 

\subsection{(i) Sampling $Af$.} Let $A$ be a classical FIO as in Theorem~\ref{thm_FIO}. If we know a priori that $f$ is \sc ly band limited, then so is $Af$ and we can find a sharp upper bound on its band limit. This determines a sharp sampling rate for $Af$. 

\begin{theorem}\label{thm_main}
Let $A$ be a classical FIO as in Theorem~\ref{thm_FIO} with canonical relation $C$. Let $f_h$ be \sc ly band limited. 
Assume that  $C\circ \WFH(f)\subset\Omega\times\mathcal{B} $ with $\Omega$ a bounded domain and $\mathcal{B}$ bounded. If $W$ is as in Theorem~\ref{thm_sc}, then $Af$ can be reconstructed in $\Omega$ up to an $O(h^\infty)$ error from its values on the lattice $shWk$, $k\in\mathbf{Z}$, $0<s<1$  in the sense of \r{2.3'}, \r{2.4'}.  
\end{theorem}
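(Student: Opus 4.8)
The plan is to deduce the statement from Theorem~\ref{thm_sc} applied to $Af$ in place of $f_h$. What has to be verified is that $Af$ meets the hypotheses of that theorem: modulo an $O_{\mathcal{S}}(h^\infty)$ error it is a compactly supported function in $\Omega$, localized in phase space in the strong sense \r{loc1}, with semiclassical frequencies confined to $\mathcal{B}$.

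First I would pin down the frequency localization of $Af$. By Proposition~\ref{pr_bl}, a \sc ly band limited $f_h$ is localized in phase space and $\WFH(f)$ is compact, so Theorem~\ref{thm_FIO} gives $\WFH(Af)\setminus 0\subset C\circ\WFH(f)\setminus 0\subset\Omega\times\mathcal{B}$. Since $A$ is properly supported and $\supp f_h$ lies in an $h$-independent compact set, so does $\supp Af$; and $Af$ is tempered because a classical FIO of order $m$, restricted to inputs supported in a fixed compact set, maps $H^s_h$ into $H^{s-m}_h$ with norm $O(h^{-N})$ for some $N$ (combine classical $H^s\to H^{s-m}$ boundedness with the fact that the classical and semiclassical Sobolev norms differ only by powers of $h$). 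The one delicate point is the zero section of $\WFH(Af)$, about which $C$ gives no information: it lies over $\supp Af$, which we may take inside $\Omega$ (equivalently, once $\WFH(Af)\subset\bar\Omega\times\mathcal{B}$ is established, $Af=O_{\mathcal{S}}(h^\infty)$ off any neighborhood of $\bar\Omega$), and it is contained in $\Omega\times\mathcal{B}$ provided $0\in\mathcal{B}$ --- which holds in all the applications, where $\mathcal{B}$ is a box or a ball centered at the origin --- or else provided $\Sigma_h(f)$, and hence $\Sigma_h(Af)$, stays away from $0$.

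Next I would upgrade this to the uniform localization estimate \r{loc1} required by Theorem~\ref{thm_sc}. Write $f_h=\chi_1(x)\chi_2(hD)f_h+r_h$ with $\chi_1,\chi_2\in C_0^\infty$ as in the proof of Proposition~\ref{pr_bl} and $r_h=O_{\mathcal{S}}(h^\infty)$, so that $Af_h=A\chi_1(x)\chi_2(hD)f_h+Ar_h$ with $Ar_h=O(h^\infty)$ in every $H^s_h$. Split $\chi_2=\chi_2^{\sharp}+\chi_2^{\flat}$ with $\supp\chi_2^{\flat}\subset\{|\xi|<\eps\}$ and $\supp\chi_2^{\sharp}\subset\{|\xi|>\eps/2\}$. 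On the range of $\chi_1(x)\chi_2^{\sharp}(hD)$ the operator $A$ acts as an $h$-FIO (the remark after Theorem~\ref{thm_FIO}), so by the semiclassical FIO calculus $A\chi_1(x)\chi_2^{\sharp}(hD)f_h$ is, modulo $O(h^\infty)$, an $h$-FIO applied to $f_h$, hence localized in phase space with the required uniform bound and with $\WFH$ inside $C\circ\WFH(f)\subset\Omega\times\mathcal{B}$. For the low-frequency piece $A\chi_1(x)\chi_2^{\flat}(hD)f_h$ I would rerun the argument in the proofs of Theorems~\ref{thm_PDO} and~\ref{thm_FIO} (it uses only the non-degeneracy of the phase): an input with $\WFH$ in $\{|\xi|\le\eps\}$ produces output with $\WFH$ in $\{|\xi|\le\eps'\}$, over a fixed compact set, for any $\eps'>\eps$; taking $\eps$ small this lands in $\Omega\times\mathcal{B}$ (using $0\in\mathcal{B}$), and the same integration by parts gives the uniform $H^s_h$-bounds. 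Assembling the three pieces, $Af$ satisfies \r{loc1} for some $\psi\in C_0^\infty(\R^{2n})$ vanishing for $\xi\notin\mathcal{B}$, and is supported, modulo $O_{\mathcal{S}}(h^\infty)$, in $\Omega$; Theorem~\ref{thm_sc} with $W$ adapted to $\mathcal{B}$ then gives \r{2.3'} and \r{2.4'} for $Af$, which is the assertion.

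The main obstacle is exactly the interface between the classical and the semiclassical calculi near $\xi=0$: a classical FIO is an $h$-FIO only away from the zero section, so the low-frequency part of $f_h$ cannot be passed to the $h$-FIO machinery and must be treated by hand as in Theorem~\ref{thm_PDO}; and since the canonical relation $C$ carries no information about the zero-section part of $\WFH(Af)$, the hypothesis is genuinely clean only when $0\in\mathcal{B}$ (or when $\Sigma_h(f)$ misses the origin). Everything else is routine bookkeeping with semiclassical cutoffs and the mapping properties already established.
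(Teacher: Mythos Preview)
The paper gives no explicit proof of this theorem: it is presented as an immediate consequence of Theorem~\ref{thm_FIO} (which pushes $\WFH(f)$ forward to $\WFH(Af)$ via $C$) and the semiclassical sampling Theorem~\ref{thm_sc}, and the only comment following it is the one-line remark about $s=1$. Your approach is exactly this combination, and it is correct.

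You go further than the paper by making explicit two points the paper glosses over. First, the zero-section issue: Theorem~\ref{thm_FIO} only controls $\WFH(Af)\setminus 0$, and you correctly note that the hypothesis $C\circ\WFH(f)\subset\Omega\times\mathcal B$ is silent about the zero section, so one tacitly needs $0\in\mathcal B$ (always true in the applications) or $\Sigma_h(f)$ bounded away from $0$. Second, the uniformity question: Theorem~\ref{thm_sc} asks for the uniform estimate \r{loc1}, not merely the non-uniform localization of Proposition~\ref{pr_bl}; your high/low-frequency splitting and appeal to the $h$-FIO calculus away from $\xi=0$ is the right way to upgrade this, and mirrors exactly the device used in the proofs of Theorems~\ref{thm_PDO} and~\ref{thm_FIO}. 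These refinements are sound and arguably make the argument more honest than the paper's implicit one.
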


If we do not require uniformity of the error in $Af$, one can take $s=1$. 

\subsection{(ii) Resolution limit of $f$ given the sampling rate of $Af$.} 
Assume now that we sample $Af$ at a rate $s_jh$ in the $x^j$-th variable with some fixed $s_j>0$; or on a more general periodic lattice.  What resolution limit does this impose on $f$?

By  Corollary~\ref{thm_sc}, to avoid aliasing, $\Sigma_h(Af)$ should be in the box $[-B_j, B_j]$ with $B_j>\pi/s_j$ (we assume that we deal with real valued functions, and therefore always work in frequency sets symmetric about the origin). Then if 
\be{C1}
\pi_2\circ C\circ\WFH(f)  \subset     \prod_j (-B_j,B_j),
\ee
there is no loss (up to $O(h^\infty$) when the data $Af$ has been sampled. Note that the  canonical relation $C$ does not need to be an 1-to-1 map. If $C$ is a local diffeomorphism and if $A$ is elliptic, then \r{C1} is sharp in the sense that if it is violated, $f$ cannot be recovered up to $O(h^\infty)$, i.e., there will be aliasing. This happens for the 2D Radon transform, for example. 

We write \r{C1} in a different way.  Then  \r{C1} is equivalent to
\be{C2}
\WFH(f)\subset  C^{-1}\circ \Big(\R^m\times \prod_j (-B_j,B_j)\Big),
\ee
assuming that $A$ takes values in $\R^m$. Similarly to $C$, the inverse canonical relation $C^{-1}$ does not need to be an 1-to-1 map.

Relation \r{C2}  gives easily a sharp limit (sharp when $A$ is elliptic, associated to a local diffeomorphism) on $\Sigma_p(f)$ guaranteeing no aliasing. It actually says something more: the resolution limit on $f$ is microlocal in nature, i.e., it may depend on the location and on the direction. We illustrate this below with the Radon transform. 

\subsection{(iii) Aliasing artifacts} \label{sec_alias_FIO}
When \r{C2} is violated and when $A$ is elliptic, associated to a local diffeomorphism, for example, there will be aliasing of $Af$. To understand how this affects $f$, if we use back-projection, i.e., a parametrix $A^{-1}$, 
we recall that the aliasing can be interpreted as an h-FIO associated to shifts of the dual variable, see \r{sh} and \r{A9}. Then the inversion would be $A^{-1}G_kA$; and by Egorov's theorem,  that is an h-FIO with a canonical relation being $C^{-1}\circ S_k\circ C$ acting on $(x,\xi)\in\supp\hat \chi(s\cdot/\pi+2k) $. 
We will not formulate a formal theorem of this type; instead we will illustrate it in the examples in the next sections. The classical aliasing described in Section~\ref{sec_alias} creates artifacts at the same location but with shifted frequencies. The artifacts here however could move to different locations, as it happens for the Radon transform, for example. 

Note that $A^{-1}$ has  canonical relation $C^{-1}$ but the latter acts on the image of $T^*\Omega\setminus 0$ if a priori $\supp f\subset \Omega$ and we restrict the reconstruction there. Some of the shifted singularities of $Af$ may fall outside that domain of $C^{-1}$ and they will create no singularities in the reconstruction. Therefore, we may have shifts in the reconstructed $f$, full or partial cancellation (and interference patterns as a result), and removal of singularities even if their images are still present in $Af$. 

\subsection{(iv) Locally averaged sampling} 
We study now what happens when the measurements $Af$ are locally smoothened either to avoid aliasing or for some practical reasons. One way to model this is to assume that we are given samples of $\phi_h*Af$, where $\phi_h= h^{-m}\phi(\cdot/h)$, where $m$ is the dimension of the data space, and $\phi$ is smooth with $\int\phi=1$. If $\hat\phi(\eta)$ is approximately supported in the ball $|\eta|\le B'$, how to sample  $\phi_h*Af$ and what does this tell us for $f$? The answer to the first question is given by the sampling theorems --- we need to sample at rate smaller than $\pi h/B'$ assuming that $Af$ a priori has even higher frequencies than $B'$. The   second question is more interesting. As explained in the Introduction, we have that  $\phi_h*$ is a \HPDO\ and if $A$ is associated to a canonical map, then  and by Egorov's theorem, $\phi_h*Af =AP_hf+O(h^\infty)f $, where $P_h$ is a \HPDO\ of order zero with principal symbol $\hat\phi\circ C$. If $A$ is well posed, we can recover $P_hf$ up to small error which is a certain regularized version of $f$. 

We can make this more general. Assume that the convolution kernel can depend on the sampling point. We model that by assuming that we are sampling $Q_hAf$, where $Q_h$ is an \HPDO\ of order zero with essential support of its symbol contained in $(y,\eta)$ for which $|\eta|\le B'$. This hides the implicit assumption that the convolution kernels cannot change rapidly when we make $h$ smaller and smaller because the symbol $q(y,\eta)$ of $Q_h$ must satisfy the symbol estimate \r{hpdo1}, and in particular, $\partial_xp$ cannot increase with $h^{-1}$. Then it is not hard to see that at every sampling point $y=y_j$, $Q_hAf(y_j)$ is the Fourier multiplier with $q(y_j,\eta)$ restricted to the point $y=y_j$. Therefore, each measurement is really a convolution. An application of the \sc\  Egorov theorem \cite{Guillemin-SC} combined with the remark following Theorem~\ref{thm_FIO} yields the following. 

\begin{proposition}\label{prop_av}
Let $f$ be \sc ly band limited. For $\eps>0$, let $f=f_1+f_2$ with $\Sigma_h(f_1)\subset \{|\xi|\le\eps\}$ and $\Sigma_h(f_2)\subset\{|\xi|\ge 2\eps  \}$. Let $F$ be a classical FIO associated with a  diffeomorphism $C$ and let $P_h$ be a \HPDO. Then $Q_hFf_1 = FP_hf_1+O(h^\infty)f_1 $ with $P_h$ a \HPDO\ with principal symbol $q_0=p_0\circ C$, where $q_0$ is the principal symbol of $Q_h$. Moreover, the full  symbol $p$ of $P_h$ is supported in $C^{-1}(supp(q))$, where $q$ is the full symbol of $Q_h$. 
\end{proposition}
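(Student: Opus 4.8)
The statement to be proved is Proposition~\ref{prop_av}: for $f$ \sc ly band limited, split as $f = f_1 + f_2$ with $\Sigma_h(f_1)\subset\{|\xi|\le\eps\}$ and $\Sigma_h(f_2)\subset\{|\xi|\ge 2\eps\}$, and one must establish $Q_hFf_1 = FP_hf_1 + O(h^\infty)f_1$ with $P_h$ an \HPDO\ whose principal symbol is $q_0\circ C$ and whose full symbol is supported in $C^{-1}(\supp q)$. The point of introducing the splitting $f = f_1+f_2$ is precisely that the \sc\ Egorov theorem for classical FIOs only works away from the zero section (cf.\ the remarks after Theorem~\ref{thm_PDO} and Theorem~\ref{thm_FIO}: a classical \PDO\ or FIO is a \sc\ one only when restricted to $f$ with $\WFH(f)$ avoiding $\{|\xi|\le\eps\}$), so the content of the proposition is the \emph{two} cases together; in the excerpt only the $f_1$ case is asserted in the displayed formula, but one proves both and the low-frequency piece is the delicate one.

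First I would reduce to a microlocal, symbol-level statement. Since $F$ is elliptic and associated to a diffeomorphism $C$, it has a two-sided parametrix $F^{-1}$ which is again a classical FIO with canonical relation $C^{-1}$; then $P_h := F^{-1}Q_hF$ is the natural candidate, and the question is only whether this composition is an \HPDO\ of order zero with the claimed symbol. Because $Q_h$ has full symbol $q$ with essential support in $\{|\eta|\le B'\}$ — hence in particular a \emph{compactly supported} symbol in $\eta$ — the composition $F^{-1}Q_hF$ has, microlocally on the region where $Q_h$ is nontrivial, a canonical relation equal to $C^{-1}\circ\Delta\circ C = \Delta$ (the diagonal), so it is an \HPDO; the stationary-phase expansion of the oscillatory integral for $F^{-1}Q_hF$ gives the principal symbol $q_0\circ C$ and, since each successive term in the asymptotic expansion is a derivative applied to $q$ composed with $C$, the full symbol $p$ inherits $\supp p\subset C^{-1}(\supp q)$. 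This is where the \sc\ Egorov theorem of \cite{Guillemin-SC} enters; the only subtlety is that $F$ and $F^{-1}$ are \emph{classical} FIOs, not genuine $h$-FIOs, and the identification of $F$ with an $h$-FIO requires staying away from $\xi = 0$ — this is exactly the remark following Theorem~\ref{thm_FIO}.

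Next I would handle the two frequency ranges separately. For $f_2$, $\WFH(f_2)\subset\{|\xi|\ge 2\eps\}$, so by the remark after Theorem~\ref{thm_FIO} both $F$ and $F^{-1}$ act on $f_2$ as honest \sc\ FIOs (modulo $O(h^\infty)$), \sc\ Egorov applies verbatim, and $Q_hFf_2 = FP_hf_2 + O(h^\infty)f_2$ with the stated symbol. For $f_1$, which is the case in the displayed formula, $\WFH(f_1)$ may meet $\{|\xi|\le\eps\}$, so one cannot directly invoke the $h$-FIO calculus; instead one argues as in the proof of Theorem~\ref{thm_PDO} (and Theorem~\ref{thm_FIO}): the image $Ff_1$ has $\WFH(Ff_1)\subset C\circ\WFH(f_1)$ — in the worst case this reaches down to, or includes, a neighborhood of the zero section in the image space. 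But then $Q_h$ is applied to $Ff_1$: one writes out $Q_hFf_1$ as an oscillatory integral, restricts the $\eta$-integration to $\supp q\subset\{|\eta|\le B'\}$ with $O(h^\infty)$ error (exactly the compactness/partition-of-unity step used in Theorem~\ref{thm_PDO}), and on that compact frequency set the classical-to-\sc\ identification holds with constants uniform in the range of $h$, so the stationary-phase argument goes through for $h$ small. The resulting $P_h$ acts on $f_1$ as a zeroth-order \HPDO; near $\xi=0$ the operator $P_h = F^{-1}Q_hF$ still makes sense as a composition of properly supported operators and, by the argument in Theorem~\ref{thm_PDO} showing that the zero section is only added trivially, it does not spoil the symbol support statement. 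Combining the two cases and using $f=f_1+f_2$ completes the proof.

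\textbf{Main obstacle.} The crux is the low-frequency piece $f_1$: the \sc\ Egorov theorem is an assertion about $h$-FIOs, but $F$ is classical and the passage $F\leftrightarrow h$-FIO degenerates at $\xi=0$, precisely the region $f_1$ lives in. The resolution — already pioneered in Theorems~\ref{thm_PDO} and \ref{thm_FIO} of this paper — is that the \emph{other} factor in the composition, $Q_h$, has a symbol supported in a fixed compact frequency set, so after restricting the intermediate frequency variable to that compact set (with negligible error) the classical and \sc\ pictures agree with $h$-uniform estimates, and the separate treatment of the genuinely-small-$\xi$ contribution (which adds at most the zero section and does not affect $\supp p\subset C^{-1}(\supp q)$) finishes the argument. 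Getting the bookkeeping of these cutoffs right, so that the $O(h^\infty)f_1$ error is honest and the symbol-support claim survives near $\xi=0$, is the one genuinely non-routine step.
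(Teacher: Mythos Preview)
Your approach for the high-frequency piece $f_2$ is exactly the paper's argument: the paper does not give a separate proof, but simply states that the proposition follows from the semiclassical Egorov theorem of \cite{Guillemin-SC} together with the remark after Theorem~\ref{thm_FIO}, namely that a classical FIO acts as an $h$-FIO on functions whose $\WFH$ avoids $\{|\xi|\le\eps\}$. That is precisely your $f_2$ paragraph, and your candidate $P_h=F^{-1}Q_hF$ with principal symbol $q_0\circ C$ and full-symbol support in $C^{-1}(\supp q)$ is what Egorov delivers.

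Where you go beyond the paper is in taking the displayed identity $Q_hFf_1=FP_hf_1+O(h^\infty)f_1$ at face value and trying to prove an Egorov statement for the \emph{low}-frequency piece. This is almost certainly a typographical slip in the paper: the splitting $f=f_1+f_2$ is introduced precisely so that the remark after Theorem~\ref{thm_FIO} can be invoked, and that remark applies to $f_2$, not $f_1$. The paper's own use of the proposition (e.g.\ the sentence ``allows us to reconstruct the smoothened $P_hf$ \emph{plus a function with much lower frequencies}'') confirms that the $f_1$ contribution is simply absorbed as a low-frequency remainder, not subjected to Egorov. Your sketched argument for $f_1$ --- restricting the $\eta$-integration to $\supp q$ and claiming uniform estimates --- does not close: the obstruction noted before Theorem~\ref{thm_PDO} is that the \emph{symbol estimates themselves} for $F$ blow up as $|\xi|\to 0$ (the $h^{-|\beta|}$ factors are not controlled), so compactness of $\supp q$ on the image side does not rescue the composition on $\{|\xi|\le\eps\}$. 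There is no honest way to make $F^{-1}Q_hF$ an $h$-\PDO\ uniformly down to $\xi=0$, and the paper does not claim it.

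In short: read the proposition with $f_2$ in place of $f_1$; then your one-line argument (Egorov plus the classical-to-semiclassical remark) is exactly the paper's proof, and the rest of your proposal can be discarded.
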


Therefore, having locally averaged  data $Q_hAf$ instead of $Af$, with $A$ elliptic, allows us to reconstruct the smoothened $P_hf$ (plus a function with much lower frequencies) instead of $f$. If we want to choose $P_h$ first, for example to be a specific convolution, we can find the operator $Q_h$ which applied to the data $Af$ results in the desired regularization. Finally, in applications, $Q_hA(y_j)$ may not be realized as convolutions with compact supports of their Fourier transforms but if those supports are approximately compact with some error estimates, we can apply the asymptotic isometry property \r{2.4} to estimate the resulting error.

\section{Non-uniform sampling: A lower bound of the sampling rate}\label{sec_NU}
  Assume we want to sample a \sc ly limited $f_h$ on a non-uniform grid, see also Section~\ref{sec_non-un}. One reason to do that would be to reduce the number of the sampling points if the shape of $\WFH(f_h)$ allows for this, as in the Radon transform examples (where we sample $\mathcal{R}f$). 
We prove a theorem similar to one of the results of Landau \cite{Landau-sampling} in the classical case. In case of non-uniform sampling, we establish a lower bound of the sampling rate of $f$ with $\WFH(f)$ contained in a fixed compact set, equal to the phase volume of the interior of the latter. In most applications, $\partial\mathcal{K}$ would have measure zero, and then $\Vol(\mathcal{K}^\text{\rm int})= \Vol(\mathcal{K})$. The theorem below is different that the corresponding results in  \cite{Landau-sampling} in the following way. Aside from being semiclassical, our bound is in terms of the volume of $\WFH(f)$, i.e., it is microlocalized, rather than being  $(2\pi)^{-n}\Vol(\Omega)\Vol(\mathcal{B})$ for the number of points in  every $\Omega$ when $\supp \hat f\subset \mathcal B$. In other words, we express the bound in terms of the volume of $\WFH(f_h)$ instead of the volume of the minimal bounding product $\Omega\times\mathcal B$. 

\begin{theorem}\label{thm_Landau}  
Let $\{x_j(h)\}_{j=0}^{N(h)}$ be a set of points in $\R^n$. 
Let $\mathcal{K}\subset T^*\R^n$ be a compact set. 
If 
\be{L1}
\|f_h\| \le C \sum_{j=1}^{N(h)} |f_h(x_j(h))|^2+O(h^\infty)
\ee
for every \sc ly band limited $f_h$ with $\WFH(f)\subset \mathcal{K}$, then 
\be{L2}
N(h)\ge (2\pi h)^{-n}\Vol(\mathcal{K}^\text{\rm int})(1-o(h)),
\ee
where $\Vol(\mathcal{K}^\text{\rm int})=\int_{\mathcal{K}^\text{\rm int}}\d x\,\d \xi$ is the measure of the interior $\mathcal{K}^\text{\rm int}$ of $\mathcal{K}\subset T^*\R^n$. 
\end{theorem}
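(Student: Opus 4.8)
The plan is to adapt Landau's necessary density argument \cite{Landau-sampling} to the \sc\ regime. The engine will be a phase--space concentration operator $T_h$ whose eigenvalues count, asymptotically, the ``essential dimension'' of the space of \sc ly band limited functions with $\WFH$ contained in $\mathcal{K}$; the frame/stability inequality \r{L1} will then force $N(h)$ to exceed that count. Since the bound \r{L2} is stated in terms of $\mathcal{K}^{\text{\rm int}}$, I would first reduce to an open set: given $\eps>0$, pick a finite union of open boxes $U$ with $\bar U\subset\mathcal{K}^{\text{\rm int}}$ and $\Vol(U)\ge\Vol(\mathcal{K}^{\text{\rm int}})-\eps$, establish $N(h)\ge (2\pi h)^{-n}\Vol(U)(1-o(1))$, and let $\eps\to0$ at the end.

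For the concentration operator I would take the anti--Wick (Berezin--Toeplitz) quantization of $\mathbf 1_U$. Let $\Phi_{x_0,\xi_0}=(\pi h)^{-n/4}f_h(\cdot;x_0,\xi_0)$ be the $L^2$--normalized coherent state of \r{coh}, so that the resolution of the identity $(2\pi h)^{-n}\int|\Phi_{x_0,\xi_0}\rangle\langle\Phi_{x_0,\xi_0}|\,\d x_0\,\d\xi_0=\Id$ holds, and put $T_h=(2\pi h)^{-n}\int_U|\Phi_{x_0,\xi_0}\rangle\langle\Phi_{x_0,\xi_0}|\,\d x_0\,\d\xi_0$. Then $T_h$ is self--adjoint with $0\le T_h\le\Id$ and trace class, $\tr T_h=(2\pi h)^{-n}\Vol(U)$, and using the Gaussian overlap $|\langle\Phi_{x,\xi},\Phi_{x',\xi'}\rangle|^2=e^{-|(x,\xi)-(x',\xi')|^2/(2h)}$ one checks by a direct integration that $\tr(T_h-T_h^2)=(2\pi h)^{-n}O(\sqrt h)=o(h^{-n})$, the loss coming from the $O(\sqrt h)$--collar of $\partial U$ (this is where it is convenient that $\partial U$ is a finite union of flat pieces). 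Writing $\lambda_1(h)\ge\lambda_2(h)\ge\dots$ for the eigenvalues of $T_h$ and $N_{1/2}(h)=\#\{j:\lambda_j(h)\ge1/2\}$, the elementary estimates $\sum_j\lambda_j(1-\lambda_j)=o(h^{-n})$ and $\sum_j\lambda_j=(2\pi h)^{-n}\Vol(U)$ give $N_{1/2}(h)\ge\sum_{\lambda_j\ge1/2}\lambda_j\ge(2\pi h)^{-n}\Vol(U)(1-o(1))$.

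Next I would compare $N_{1/2}(h)$ with $N(h)$. Take orthonormal eigenfunctions $e_1,\dots,e_{N_{1/2}}$ for the top eigenvalues. Since $e_j=\lambda_j^{-1}T_he_j$ with $\lambda_j\ge1/2$, each $e_j$ is a superposition of coherent states with centers in $U$; a direct stationary--phase estimate for $\mathcal{F}_h(\phi\,\Phi_{x_0,\xi_0})$ then shows that $\WFH(e_j)\subset\bar U$, that the $e_j$ are smooth with spatial tails $O(h^\infty)$, and --- crucially --- that all of this is uniform in $j$. Fixing $\psi\in C_0^\infty(\R^n)$ equal to $1$ near the $x$--projection of $\bar U$, the truncations $\tilde e_j=\psi e_j\in C_0^\infty$ then satisfy $\|\tilde e_j-e_j\|_{H^s_h}=O_s(h^\infty)$ uniformly in $j$, have $\WFH(\tilde e_j)\subset\mathcal{K}$, and have Gram matrix $\Id+O(h^\infty)$. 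Now suppose, along some sequence $h\to0$, that $N(h)<N_{1/2}(h)$. Then the linear map $\mathbf C^{N_{1/2}}\ni c\mapsto\bigl(\sum_jc_j\tilde e_j(x_k(h))\bigr)_{k=1}^{N(h)}$ has a nonzero kernel; choosing $c=c(h)$ in it with $\sum_j|c_j|^2=1$ and setting $f_h=\sum_jc_j\tilde e_j$ produces a single \sc ly band limited family with $\WFH(f_h)\subset\mathcal{K}$ (the decay \r{defB} holding with uniform constants, because $\sqrt{N_{1/2}}=O(h^{-n/2})$ absorbs the uniform $O(h^\infty)$ bounds), with $\|f_h\|=1+O(h^\infty)$ by the Gram bound, and with $f_h(x_k(h))=0$ for all $k$. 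Then \r{L1} reads $1+O(h^\infty)=\|f_h\|\le C\sum_k|f_h(x_k(h))|^2+O(h^\infty)=O(h^\infty)$, impossible for small $h$. Hence $N(h)\ge N_{1/2}(h)\ge(2\pi h)^{-n}\Vol(U)(1-o(1))$, and taking the supremum over admissible $U$ yields $\liminf_{h\to0}(2\pi h)^nN(h)\ge\Vol(\mathcal{K}^{\text{\rm int}})$, which is \r{L2}.

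I expect the main obstacle to be the two uniformity issues in the middle: the eigenvalue count for $T_h$ with an honest $o(h^{-n})$ remainder (controlling the transition region near $\partial U$, which is exactly why $\mathcal{K}^{\text{\rm int}}$ and a ``nice'' $U$ appear), and the uniform--in--$j$ smoothness and decay of the eigenfunctions with $\lambda_j\ge1/2$ so that the truncation and the ``single family'' repackaging respect the non-uniform Definition~\ref{def_B}. A technically different but essentially equivalent route, leaning on the paper's own machinery, would be to tile $\mathcal{K}^{\text{\rm int}}$ (up to $\eps$) by disjoint phase--space rectangles $\Omega_i\times Q_i$, build on each one an orthonormal family of $\approx(2\pi h)^{-n}\Vol(\Omega_i\times Q_i)$ \sc ly band limited functions with $\WFH$ in that cell via Corollary~\ref{cor_sc} and the unitarity in Remark~\ref{rem_unit}, observe that functions with $\WFH$ in disjoint cells are $O(h^\infty)$--orthogonal, and then run the same kernel/contradiction step; this trades the Toeplitz trace computation for an almost--orthogonality lemma across cells.
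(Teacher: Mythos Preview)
Your strategy is the same as the paper's: build a phase--space concentration operator, count eigenvalues near $1$ to get an asymptotic ``dimension'' of functions with $\WFH\subset\mathcal K$, and then use a linear--algebra kernel argument to force $N(h)$ to dominate that count. The implementation differs. The paper works with the Weyl quantization $P_h=p^{\rm w}(x,hD)$ of a \emph{smooth} symbol $0\le p\le 1$ supported in $\mathcal K^{\rm int}$ and equal to $1$ on a large closed subset $F_\delta$; then (i) the dimension of the eigenspace $E_h$ for eigenvalues in $[1/2,2]$ is read off directly from the Weyl law in \cite{DimassiSj_book}, and (ii) the inclusion $\WFH(f_h)\subset\mathcal K$ for $f_h\in E_h$ follows from a one--line \HPDO\ calculus argument ($q(x,hD)\phi_h=\lambda_h^{-1}q(x,hD)P_h\phi_h=O(h^\infty)$ whenever $\supp q\cap\supp p=\emptyset$), with no need to control boundary layers or uniformity across eigenfunctions. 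Your anti--Wick/Toeplitz route with the rough symbol $\mathbf 1_U$ trades that black box for explicit trace computations ($\tr T_h$, $\tr(T_h-T_h^2)$), which is more self--contained but puts the burden exactly where you flagged it: the $o(h^{-n})$ control of the transition region near $\partial U$ and the uniform--in--$j$ phase--space localization and decay of the top eigenfunctions. Both of these become essentially trivial in the paper's setup because the smooth $p$ makes $P_h$ a genuine compactly supported \HPDO, so if you want to shorten your argument, replacing $T_h$ by $p^{\rm w}(x,hD)$ with smooth $p$ (and invoking \cite{DimassiSj_book} for the count) collapses the two hard steps into standard calculus facts. Your alternative ``tiling'' route at the end is a third variant of the same scheme and would also work.
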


\begin{proof} 
By the properties of the Lebesgue measure, given $\delta>0$, we can find a closed (and necessarily compact in this case) set $F_\delta\subset \mathcal{K}^\text{\rm int}$  so that $\Vol( \mathcal{K}^\text{\rm int} \setminus F_\delta)<\delta$. 
 Let $0\le p(x,\xi)\le 1$ be a real valued smooth function supported in   $\mathcal{K}^\text{\rm int}$  and equal to $1$ on $F_\delta$.  Let $P_h=p^\text{w}(x,hD)$  be the Weyl quantization of $p$. Then $P_h$ is self-adjoint and compact. 

Let $E_h$ be the eigenspace of $P_h$ spanned by the eigenfunctions corresponding to the eigenvalues in 
$[1/2,2]$. The upper bound $2$ can be replaced by any number greater than $1$ since the eigenvalues of $P_h$ cannot exceed $1+O(h)$. For simplicity (not an essential assumption for the proof), assume that $1/2$ is a non-critical value for $p$. 
Every $f_h\in E_h$  satisfies $\WFH(f_h)\subset \mathcal{K}$. Indeed, for every unit  eigenfunction $\phi_h$ of $P_h$ with an eigenvalue  $\lambda_h\in [1/2,2]$, and for every $q\in S^{0,0}$ with $\supp q\cap \mathcal{K}=\emptyset $ we have $q(x,hD)\phi_h =(1/\lambda_h)q(x,hD) p(x,hD)\phi_h=O(h^\infty)$, and this yields the same conclusion for every tempered $f_h\in E_h$.

 By \cite{DimassiSj_book}, we have the following Weyl asymptotic
\be{dim}
\dim E_h= (2\pi h)^{-n}\Vol\left(1/2\le p\le 2\right)(1+O(h)). 
\ee
We will show that $N(h)\ge\dim E_h$ for $0<h\ll1$. If this is not true, we would have $N(h)<\dim E_h$ for a sequence $h=h_j\to0$. Then for every $h\in\{h_j\}$, we can find a unit $f_h\in E_h$ so that all the samples of $f_h$ vanish for such $h$. As we showed above, $\WFH(f_h)\subset \mathcal{K}$. This contradicts \r{L1} however. 

Therefore, by \r{dim},
\[
\Vol(F_\delta)< \Vol\left(1/2\le p\le 2\right)\le \liminf_{h\to0} (2\pi h)^nN(h) . 
\]
Take the limit $\delta\to0$ to complete the proof.  
\end{proof}

\begin{remark}
The proof holds if we replace the error term in \r{L1} by $o(h)\|f\|$.
\end{remark}

\begin{remark}\label{rem_L1}
Existence and a characterization of  optimal sampling sets where \r{L2} would be an equality is a harder problem which we do not study here. 
Estimate \r{L2} is sharp in case of uniform sampling described in Corollary~\ref{cor_sc} at least, see Remark~\ref{remark_W} which generalizes easily to different band limits $B_j$ for each component of $x$ as in Corollary~\ref{cor_sc}. 
It is straightforward to show that \r{L2} is also sharp in the case of more general uniform sampling described in Theorem~\ref{thm_sc}. 
\end{remark}

\begin{remark}\label{rem_L2}
The statement of the theorem is preserved under (non-linear) diffeomorphic transformations because $\WFH$ and its phase volume are invariant. 
If for some $\mathcal{K}$, we can choose a non-linear transformation which would fit the problem in the situation handled by Theorem~\ref{thm_sc}, we can construct a sampling set with the optimal number of sampling points by transforming the periodic lattice by that transformation. Doing this piece-wise, as suggested in  Section~\ref{sec_non-un}, would provide a smaller sampling set. We show how this can be done in our Radon transform examples.
\end{remark}

\begin{corollary}\label{cor_num}
Let $A$ be a classical FIO of order $m$ associated with a diffeomorphic canonical relation $C$. Then the minimal asymptotic number of points (up to an $1+o(h))$ relative error) to sample $Af_h$ guaranteed by Theorem~\ref{thm_Landau}   does not exceed the number of points needed to sample $f_h$; and if $A$ is elliptic, it is the same. 
\end{corollary}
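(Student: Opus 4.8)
The plan is to read off both bounds from Theorem~\ref{thm_Landau}, using Theorem~\ref{thm_FIO} to relate $\WFH(Af_h)$ to $\WFH(f_h)$ and the symplectic invariance of phase volume under a canonical graph. First I would fix a \sc ly band limited $f_h$; then $\WFH(f_h)$ is compact by Proposition~\ref{pr_bl}, and by Theorem~\ref{thm_main} so is $\WFH(Af_h)$. Applying Theorem~\ref{thm_Landau} with $\mathcal{K}=\WFH(f_h)$ (which is legitimate, being closed) shows that any stable sampling set for $f_h$ in the sense of \r{L1} has at least $(2\pi h)^{-n}\Vol\big((\WFH(f_h))^{\text{int}}\big)(1-o(h))$ points; this is the quantity we call ``the number of points needed to sample $f_h$'', and by Remark~\ref{rem_L1} it is attained on a uniform grid. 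Likewise the bound guaranteed by Theorem~\ref{thm_Landau} for $Af_h$ is $(2\pi h)^{-n}\Vol\big((\WFH(Af_h))^{\text{int}}\big)(1-o(h))$.

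Next I would invoke Theorem~\ref{thm_FIO}: since $C$ is a diffeomorphism, $\WFH(Af_h)\setminus 0\subset C\big(\WFH(f_h)\big)\setminus 0$, and this inclusion is an equality when $A$ is in addition elliptic, because then $A$ has a left parametrix which is itself an FIO (see the remark following Theorem~\ref{thm_FIO}). The zero section $\R^n\times\{0\}$ is a Lebesgue-null subset of $T^*\R^n$, hence irrelevant for the phase volumes. The geometric heart of the argument is that a canonical relation which is (locally) the graph of a diffeomorphism is a symplectomorphism, so it preserves the Liouville volume form; in the coordinates used here this form is $\d x\,\d\xi$, whence $\Vol\big(C(S)\big)=\Vol(S)$ for every measurable $S\subset T^*\R^n$, and, $C$ being a diffeomorphism, $\big(C(\mathcal{K})\big)^{\text{int}}=C\big(\mathcal{K}^{\text{int}}\big)$. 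Combining these,
\[
\Vol\big((\WFH(Af_h))^{\text{int}}\big)\le \Vol\big(C(\WFH(f_h)^{\text{int}})\big)=\Vol\big((\WFH(f_h))^{\text{int}}\big),
\]
with equality when $A$ is elliptic, which is the claim after multiplying by $(2\pi h)^{-n}(1-o(h))$.

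I do not expect a genuine obstacle here. The only points needing a line of care are the treatment of the zero section (harmless, being null for the volume) and the observation that the $(1-o(h))$ error factors are the same on both sides; the substantive input is entirely the symplectic invariance of volume under a canonical graph, fed into Theorems~\ref{thm_FIO} and~\ref{thm_Landau}.
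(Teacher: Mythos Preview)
Your proposal is correct and follows essentially the same approach as the paper: the paper's one-line proof invokes exactly the two ingredients you use, namely that $C$ is symplectic (hence volume-preserving) and Theorem~\ref{thm_FIO}. Your version simply spells out the details the paper leaves implicit---compactness via Proposition~\ref{pr_bl}, the measure-zero zero section, commutation of $C$ with taking interiors, and the ellipticity case via the remark after Theorem~\ref{thm_FIO}---all of which are appropriate.
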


The proof follows from the fact that $C$ is symplectic and in particular it preserves the phase volume; and from Theorem~\ref{thm_FIO}. 

In the examples we consider, $A$ happened to be an 1-to-2 diffeomorphism, and each branch is elliptic. Then we can  apply the corollary to each ``half'' of $C$. Then the number of points to sample $Af$ stably would be twice that for $f$; but the number of points to recover $f$ stably up to a function with $\WFH$ in a small neighborhood of the zero section is half of that.

In the remainder of the paper, we present a few applications.

\section{The X-ray/Radon transform in the plane in the parallel geometry} \label{sec_R_PG}
We present the first example: the X-ray/Radon transform $\mathcal{R}$ in the plane in the so-called parallel geometry parameterization. The analysis of this transform in this and in the fan-beam representation  can and has been done with traditional tools \cite{Bracewell, Rattey-sampling, Natterer-sampling1993}, also \cite[Ch.~III]{Natterer-book}, when the weight is constant since the symmetry allows us to relate that transform to the Fourier transform of $f$ by the Fourier Slice Theorem.  
The sampling of $\mathcal{R}f$ however requires estimates of the Fourier transform of $\mathcal{R}f$, which is done in those papers in a non-rigorous way by Bessel functions expansions and their asymptotics.

We go a bit deeper than that even when the weight is constant and we treat variable weights as well. The main purpose of this section is to demonstrate the general theory on a well studied transform, where one can write explicit formulas; and sampling analysis has been done (for constant weights), so we can compare the results. 

The numerical simulations in this and in the next section have been done in Matlab. The phantoms $f$ are defined by formulas and sampled first on a very fine grid. Then we compute their Radon transforms $\mathcal{R}  f$ numerically. To simulate coarser sampling of $\mathcal{R}  f$, we sample the so computed $\mathcal{R}  f$. To simulate inversion with coarsely sampled $\mathcal{R}  f$, we upsample the downsampled data to the original grid to simulate a function of continuous variables. Instead of using the Whittaker-Shannon interpolation formula \r{2.3}, we use the {\tt lanczos3}  interpolation which is a truncated version of the latter. Then we perform the inversion on that finer grid. Note that our goal is not to reduce the computational grid at this point, it rather is to show the amount of data and the artifacts contained in data sampled in a certain way. We compute the Fourier transforms of $f$ and $\mathcal{R}f$ using the discrete Fourier transform command in Matlab. Since we work with $f$ vanishing near the boundary of the square $[-1,1]^2$, and $\mathcal{R}f$ is vanishing in the $p$ variable near $p=\pm 1$ (for such $f$'s) and is periodic in its angular variables, the  discrete Fourier transform, which in fact is a transform on a torus, gives no  artifacts. 


\subsection{$\mathcal{R}_\kappa$ as an FIO} 
  Let $\mathcal{R}_\kappa$ be the weighted Radon transform in the plane
\be{R1}
\mathcal{R}_\kappa f(\omega,p) = \int_{x\cdot \omega=p}\kappa(x,\omega)  f(x)\,\d\ell,
\ee
where $\kappa$ is a smooth weight function, $p\in \R$, 
$\omega\in S^1$, 
and $\d\ell$ is the Euclidean line measure on each line in the integral above. If $\kappa=1$, we write $\mathcal{R}=\mathcal{R}_\kappa$. 
Each line is represented twice: as $(\omega,p)$ and as $(-\omega,-p)$ but it is represented only once as a directed one. In general, the weight does not need to be even in the $\omega$ variable, so it is natural to think of the lines  as directed ones. 
Let $\omega^\perp$ be $\omega$ rotated by $\pi/2$. We parameterize $\omega$ by its polar angle $\varphi$ and write
\be{omega}
\omega(\phi) = (\cos\varphi,\sin\varphi). 
\ee
 The Schwartz kernel of $\mathcal{R}_\kappa$ is $\kappa\delta(x\cdot\omega(\phi)-p )$.   
Then it is straightforward to show that $\mathcal{R}_\kappa$ is an FIO of order $-1/2$ with canonical relation
\be{R2}
C=\bigg\{\Big(\varphi, \underbrace{x\cdot\omega(\varphi)}_{p}, \underbrace{-\lambda (x\cdot\omega^\perp(\varphi))}_{\hat\varphi},\underbrace{\lambda}_{\hat p}, x, \underbrace{\lambda\omega(\varphi)}_{\hat x=\xi}\Big),\; \lambda\not=0\bigg\},
\ee
where we used the non-conventional notation of denoting the dual variable of $p$ by $\hat p$, etc. 
Set $\xi=\lambda\omega$. Given $\xi\not=0$, there are two solutions for $\lambda$, $\omega$: either $\lambda_+=|\xi|$, $\omega=\xi/|\xi|$ or $\lambda_-=-|\xi|$, $\omega=-\xi/|\xi|$. Therefore, $C=C_+\cup C_-$, where
\be{R2c}
C_\pm:  (x, \xi)\longmapsto \bigg(\underbrace{\arg (\pm\xi)}_\varphi, \underbrace{\pm x\cdot\xi/|\xi|}_p, \underbrace{- x\cdot\xi^\perp }_{\hat\varphi},\underbrace{\pm |\xi|}_{\hat p}\bigg).
\ee
The Schwartz kernel of $\mathcal{R}_\kappa$ is a delta function on $Z:=\{x\cdot\omega(\varphi)=p\}$ which is invariant under the symmetry $\tau(\varphi,p) = (\varphi +\pi,-p)$ (the latter modulo $2\pi$). Then $C$ is invariant under $\tau$ lifted to the cotangent bundle: 
\be{R2D}
(\varphi, p,\hat \varphi,\hat p)\quad \longmapsto \quad (\varphi+\pi, -p,\hat \varphi,-\hat p),
\ee
and in fact this is an isomorphism between $C_+$ and $C_-$. 

Take $C_+$ first. We see that a singularity $(x,\xi)$ of $f$ can create a singularity of $\mathcal{R}_\kappa f(p,\omega)$ at  $p=x\cdot\xi/|\xi|$ and $\omega = \xi/|\xi|$, i.e., at $\varphi=\arg(\xi)$ in the codirections $|\xi|(- (x\cdot\omega^\perp)\omega^\perp,1)$. Note first that such $(p,\omega)$ determines the oriented  line  through $x$ normal to $\xi$ and the normal is consistent with the orientation. Taking $C_-$ next, we see that $(x,\xi)$ may affect the wave front set of $\mathcal{R}_\kappa$ at $p=-x\cdot\xi/|\xi|$ and $\omega=-\xi/|\xi|$, and at the corresponding codirections. That is the same line as before but with the opposite orientation and the weight $\kappa$ on it might be different.

So we see that $\mathcal{R}_\kappa$ is an FIO associated with $C_-\cup C_+$ and each one of them is a local  diffeomorphism. Indeed, $(x,\xi) = C_\pm^{-1}(\varphi,p,\hat\varphi,\hat p)$ is given by
\be{R2A}
 x= p\omega(\varphi) - (\hat\varphi/\hat p)     \omega^\perp(\varphi), \quad   \xi= \hat p\omega(\varphi). 
\ee
It is well defined for $\hat p\not=0$ but if we want $x$ in the image to be in $|x|<R$, we need to require $p^2+(\hat\varphi/ \hat p)^2<R^2$, see also \r{4.1} and  Figures  \ref{fig_R} and \ref{fig_R2}. 

\subsection{(i) Sampling $\mathcal{R}_\kappa f$.} 
\subsubsection{Sampling $\mathcal{R}_\kappa f$ on periodic grids.}
Assume that $f=f_h$ satisfies
\be{R_as}
\WFH(f)\subset \{(x,\xi);\; |x|\le R, \, |\xi|\le B\}
\ee
with some $R>0$, $B>0$, i.e., up to $O(h^\infty)$,   $f$ is essentially supported in $B(0,R)$ and $\mathcal{F}_hf$  is essentially supported in $|\xi|\le B$. The number $N_F$ of points to sample $f$ stably then is given
\be{Nf}
N_f\sim (2\pi h)^{-2}\pi R^2\times \pi B^2 = h^{-2}R^2B^2/4,
\ee
where $\sim$ means equality up to an $(1+o(h))$ relative error, see Theorem~\ref{thm_Landau} and Remark~\ref{rem_L1}.

A sharp upper interval containing   $\hat p$ is $[-|\xi|,|\xi|]$ and a sharp   interval for $\hat\varphi$ is  $[-R|\xi|, R|\xi|]$,
\begin{wrapfigure}[14]{r} {0.46\textwidth}
\begin{center}
	\includegraphics[page=4 ]{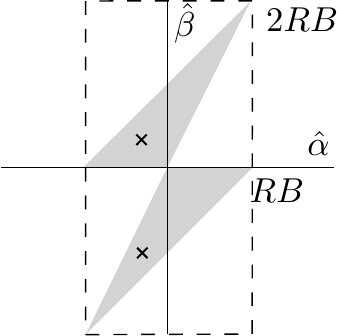}
\end{center}
\caption{\small The frequency set of $\mathcal{R}_\kappa  f$.}
\label{fig_R}
\end{wrapfigure} 
 therefore, the smallest rectangle containing $\mathcal{F}_h \mathcal{R}_\kappa f$ for all such possible $f$'s is 
\[ \hspace{-200pt}
(\hat\varphi, \hat p)\in   RB[-1,1]\times B [-1,1].
\]
Note that this rectangle does not describe  $\Sigma_h(\mathcal{R}_\kappa f)$   when   $\Sigma_h(f)$ is included in $|\xi|\le B$. The latter  is the cone $|\hat\varphi|\le R|\hat p|$ in that rectangle, i.e.,
\be{4.1}\hspace{-200pt}
\left\{(\hat\varphi, \hat p);\; |\hat\varphi|\le R|\hat p|,\, |\hat p|\le B \right\},
\ee
see Figure~\ref{fig_R}.  Suppose we sample on a rectangular grid with 
 sampling rates $s_\varphi h$ in the $\varphi$ variable in $[0,2\pi]$; and with a sampling rate $s_ph$ in the $p$ variable in $[-R,R]$. Then the Nyquist condition is equivalent to
\be{R_N}
s_\varphi \le\frac{\pi}{RB}, \quad s_p\le \frac{\pi}B. 
\ee 
 This means taking $ 2RB/h\times 2 RB/(\pi h)= h^{-2} 4R^2B^2/\pi$ samples to recover $\mathcal{R}_\kappa$ approximately, see also  \cite{Natterer-book}.   Note that this is $8/\pi$ times the estimate in \r{Nf} for $2N_f$, which by Corollary~\ref{cor_num} is the theoretical asymptotic minimum since the canonical relation $C$ is 1-to-2. 
For the recovery of $\WFH(f)\setminus 0$ we need half of those samples, i.e., $\sim h^{-2} 2R^2B^2/\pi$. This is again $8/\pi$ times the asymptotic minimum in Corollary~\ref{cor_num}. 

On the other hand, the conic shape of the range of $\WFH(f)$ in \r{4.1} suggests a more efficient sampling. One can tile the plane with that set by using the 
elementary translations by $(RB,B )$ and by $(0,2B)$. 
If $2\pi (W^*)^{-1}$ has those columns, then 
\be{4.2}
W = \frac{\pi}{RB} \begin{pmatrix} 2 &-1 \\0&R \end{pmatrix}.
\ee
Then one should sample on a grid $s hW\mathbf{Z}^n$ with $s<1$.  Since $\det W=2\pi^2/RB^2$, and the area of the region we sample is $2\pi\times 2R=4\pi R$, we see that we need $\sim h^{-2}2R^2B^2/\pi$ points which is $4/\pi$ larger than $2N_f$; and for proper recovery of $\WFH(f)$ we need a half of that, i.e., $\sim 4/\pi$ times $N_f$. The coefficient $4/\pi$ is close to $1$ but not equal to $1$ as it is clear from the next section and  Figure~\ref{fig_R2}.

\subsubsection{Microlocalization and non-uniform sampling.} \label{sec_NU1} 
The sampling requirements above were based on the following. To determine the sampling rate for $\mathcal{R}_\kappa f$, we find $\Sigma_h(R_\chi f)$ as the projection of $\WFH(\mathcal{R}_\kappa f)\subset C\circ\WFH(f)$ to its phase variable. Since we are interested in sampling $f$ with $\WFH(f)\subset \{|x|\le R, \; |\xi|\le B\}$ and to find a sharp sampling rate, we projected $C\circ \{|x|\le R, \; |\xi|\le B\}$  onto its phase variable to get  the smallest closed set  \r{4.1} containing $\Sigma_h(\mathcal{R}_\kappa f)$ for every such $f$. This answers the question if we are interested in sampling on a periodic grid for all such possible $f$'s. The analysis allows us to localize or microlocalize some of those arguments. 

\textbf{The dependence of the sampling requirements on $\supp f$.} 
The sampling frequency in the angular variable on a rectangular grid should be smaller than $\pi h/(RB)$, and the dependence  on $R$ may look strange since the Radon transform has a certain translation invariance. The reason for it is that we assume that we know that  $f$ is supported in a disk and we reconstruct it there only.
 Numerical experiments reveal that when the sampling rate in the angular variable decreases, artifacts do appear and they move closer and closer to the original when the rate decreases. 

\textbf{Non-uniform sampling.} 
We are interested first in the optimal sampling rate of $\mathcal{R}_\kappa f$ locally, near some $(\varphi,p)$. The latter is determined by the frequency set 
$C\circ \{|x|\le R, \; |\xi|\le B\}$ projected to its phase variables $(\hat\varphi,\hat p)$ with $(\varphi, p)$ fixed. It is straightforward to see that on the range of $C$, 
\be{4.3}
|\hat\varphi| =\pm |\hat p| \sqrt{|x|^2-p^2}, \quad |\hat p|\le B, \quad \text{where $|p|\le |x|$}.
\ee
Since $x$ ranges in $|p|\le |x|\le R$, we get 
\be{4.4}
|\hat\varphi| \le |\hat p|\sqrt{R^2-p^2},\quad  |\hat p|\le B.
\ee
 We plot those double triangles in Figure~\ref{fig_R} at a few points in the rectangle $[0,2\pi]\times [-1,1]$ (where $\varphi=0$ and $\varphi=2\pi$ should be identified) in the $(\varphi,p)$ plane. The phantom $f$ consists of six small Gaussians in the unit disk. 
\begin{figure}[h!] 
  \centering
	\includegraphics[trim = 0 20 0 0, clip,scale=0.3]{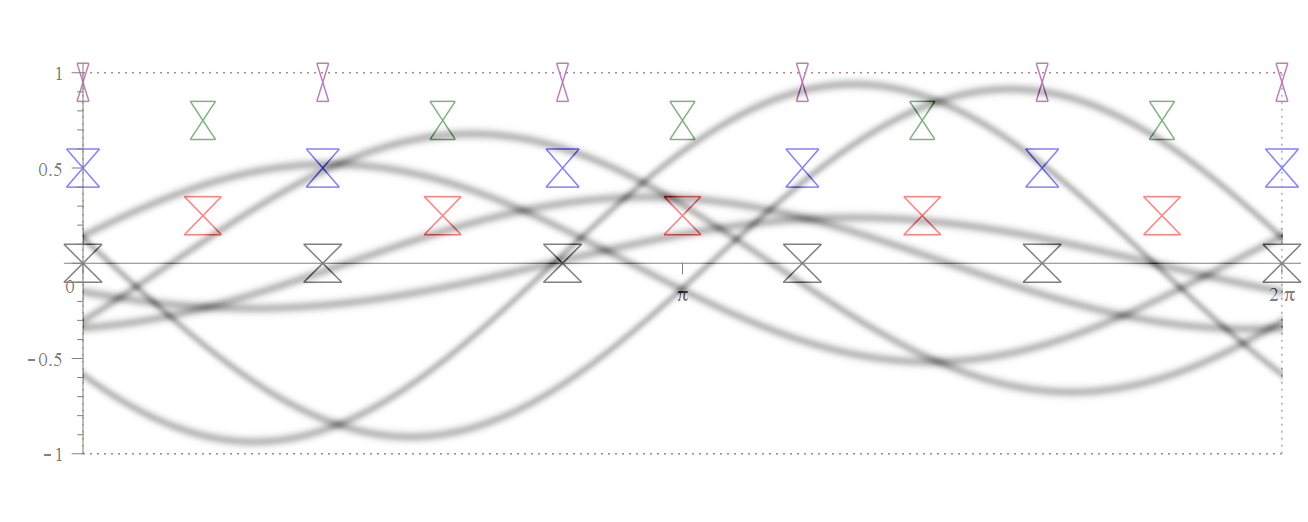}
\caption{\small $ \mathcal{R}f(\varphi,p)$ for $f$ consisting of six randomly placed small Gaussians in the unit ball. The double triangles, shown in the upper half only, represent $\WFH( \mathcal{R}f)$ localized at their vertices $(\varphi,p)$.}
\label{fig_R2}
\end{figure}
We also superimpose a density plot of $\mathcal{R}f$ for a certain $f$ consisting of six randomly places small Gaussians in the unit disk (with $R=1$). 
One can see from this figure that the set of the conormals to the curves in the plot of $\mathcal{R}f$ fall inside those triangles but the \sc\ wave front set also captures the range of the magnitudes of the frequencies. When the stripes are horizontal, the magnitude drops a bit and the stripes a bit more blurred. 
Since $R=1$ in this example, the Nyquist sampling limits of the sampling rates $s_\varphi h$ and $s_ph$ given in \r{R_N} are equal, i.e., the optimal grid would be a square one. 

This analysis suggests the following non-uniform sampling strategy. For a fixed $k$, we divide the interval $[-1,1]\ni p$ into $2k$ sub-intervals $I_j^\pm =\pm [(j-1)/k,j/k]$, $j=1,2,\dots,k$, each one of length $1/k$. For each $k$, we take $|\hat\varphi|\le |\hat p|\sqrt{R^2-j^2/k^2}$, $|\hat p|\le B$ as a sharp cone where $(\hat\varphi, \hat p)$ lies, see \r{4.3}. Then in $[0,2\pi]\times I_j$, we sample on the grid $\delta h W_j\mathbf{Z}^n$, where $\delta<1$ is fixed and $W_j$ is as in \r{4.2} with $R=1/j$. Then we can get closer and closer to the sharp number of the sampling point for $\mathcal{R}_\chi$ stated in Corollary~\ref{cor_num} and Theorem~\ref{thm_Landau}, which should be $~\sim 2N_f$ for a stable recovery of $R_\kappa f$ and $N_f$ for a stable recovery of $f$ itself, see \r{Nf}.

\subsection{(ii) Resolution limit on $f$ posed by the sampling rate of $\mathcal{R}_\kappa f$} 
 Let $s_\varphi$ and $s_p$ be the relative sampling rates for $\varphi$ and $p$, respectively. Lack of aliasing is equivalent to $\hat\varphi<\pi/s_\varphi$, $\hat p<\pi/s_p$, see \r{C1}, \r{C2}. By \r{R2c}, this is equivalent to 
\be{Res1}
|x\cdot\xi^\perp|\le \pi/s_\phi,\quad |\xi|\le \pi/s_p.
\ee
If the sampling rates satisfy the sharp Nyquist condition \r{R_N}, the latter condition above implies the former. Actually, the first condition in \r{Res1} is most critical for $(x,\xi)$ with $x$ close to the boundary $|x|=R$ and $\xi\parallel x$, which  are represented by radial lines close to $|x|=R$. In Figure~\ref{fig2c}, which is undersampled in $\varphi$, we see evidence of that; another evidence is  Figure~\ref{fig3c}, where $\mathcal{R}f$ is blurred in $\varphi$. 

When the sampling rates do not necessarily satisfy the Nyquist condition \r{R_N} in $|x|<R$, we illustrate the significance of \r{Res1} in Figure~\ref{fig_R_resolution}. The relative sampling rate $s_p$ imposes a universal limit on the resolution, independent on $x$ and the direction of $\xi$. On the other hand, the second inequality imposes a locally non-uniform and a non-isotropic resolution limit. Assuming  $s_p\ll1$ (which is true in practical applications), 
in optics terms, the resolution of saggital (radial) lines deteriorates gradually away from the center; there, $|x\cdot \xi/|\xi||\ll1$, so $|x\cdot \xi^\perp/|\xi^\perp||$ is close to its maximum for that $x$, which restricts $|\xi|$ by \r{Res1}. Resolution of meridional (circular) lines is the greatest; there $|x\cdot \xi^\perp/|\xi^\perp||\ll1$, so for a given $x$, $|\xi|$ could be large by \r{Res1}. There are also aliasing artifacts explained below. 

\begin{figure}[h!] \hfill
\begin{subfigure}[t]{.22\linewidth}
\centering
\includegraphics[height=.15\textheight]{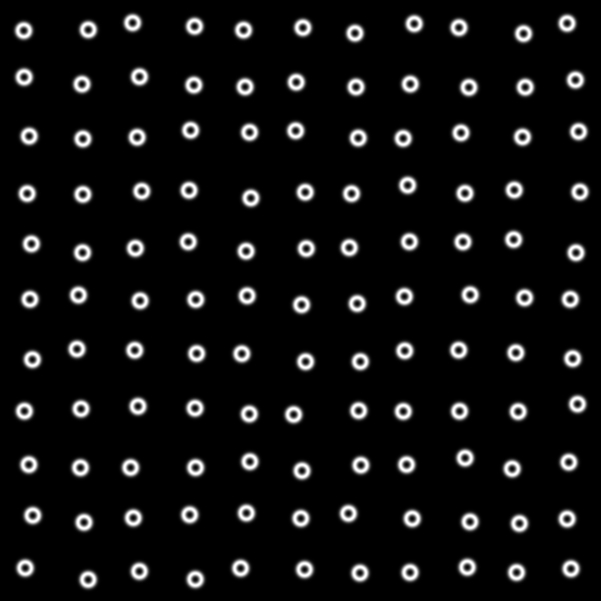}
\caption{$f$ on $[-1,1]^2$}\label{fig2a}
\end{subfigure}\hfill
\begin{subfigure}[t]{.22\linewidth}
\centering
\includegraphics[height=.15\textheight]{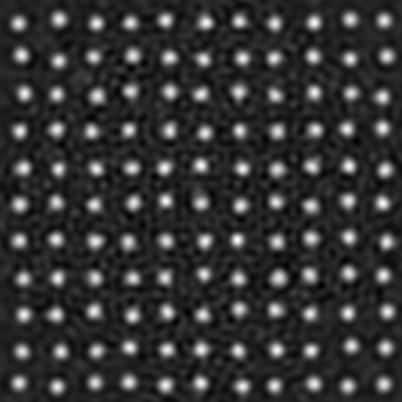}
\caption{Reconstructed $f$ undersampled in $p$.}\label{fig2c}
\end{subfigure}  \hfill 
\begin{subfigure}[t]{.22\linewidth}
\centering
\includegraphics[height=.15\textheight]{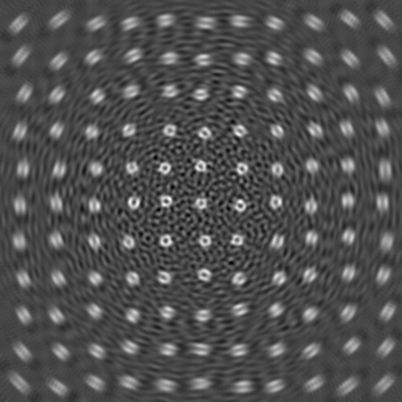}
\caption{Reconstructed $f$,  angular step $s_\varphi=4^\circ$}\label{fig2b}
\end{subfigure} \hfill \,\vspace{-10pt}
\caption{(a) $f$ on $[-1,1]^2$, (b) $f$ reconstructed with $s_p\ll1$ and an angular step  $s_\varphi=4$ degrees; (c) $f$ reconstructed with $s_\varphi\ll1$ and undersampled in the $p$ variable. 
}  \label{fig_R_resolution}
\end{figure}

\subsection{(iii) Aliasing} We study now what happens if $\mathcal{R}_\kappa f$ is undersampled. It might be undersampled in the $\varphi$ or the $p$ variable or in both.

\subsubsection{Angularly undersampled $\mathcal{R}_\kappa f$} Assume \r{R_as} as above and assume that $s_\varphi> \pi h/(RB) $, i.e., the first Nyquist condition in \r{R_N} is violated. Then the aliasing of $\mathcal{R}_\kappa f$ can be described as a sum of   h-FIOs, see \r{A2}, \r{sh}, with canonical relations
\be{sg_phi}
S_k : \hat\varphi\longmapsto \hat\varphi+ {2\pi k}/{s_\varphi} , \quad k=0,\pm1, \dots . 
\ee
In typical cases with not very severe undersampling, $k$ is restricted to $k=\pm 1$ plus $k=0$ which is the original image but blurred by $\hat\chi$ in \r{A2}. 
Then  a direct computation shows that the aliasing artifacts are described by an h-FIO with canonical relations 
\be{RA1}
(x,\xi)\quad\longmapsto \quad C_\pm^{-1}\circ S_k\circ C_\pm (x,\xi) = \Big(x \mp \frac{2\pi k}{s_\varphi} \frac{\xi^\perp}{|\xi|^2} ,  \xi\Big),
\ee
when $\hat\varphi+2k\pi/s_\varphi\in [-\pi/s_\varphi, \pi/s_\varphi]$, i.e., when \be{RA2}
-x\cdot\xi^\perp+2k\pi/s_\varphi\in [-\pi/s_\varphi, \pi/s_\varphi].
\ee
Those are shifts of $(x,\xi)$ in the $x$ variable, in the direction of $\xi^\perp$, at distance $2\pi k/(s_\varphi |\xi|)$. By \r{RA2}, $k$ depends on $(x,\xi)$ and in particular for $|x\cdot\xi^\perp|\ll1$, we have $k=0$ only of $\Sigma_h(f)$ is finite and then there is no aliasing. In general, the reconstructed $f$ will have the singularities of $f$ shifted by \r{RA1} for various $k=0,\pm1,\dots$, as long as they satisfy \r{RA2}. The value $k=0$ corresponds to $\WFH(f)$ (not shifted). Note that only finitely many of them would stay in the ball $|x|<R$.  It is even possible all of them to be outside that ball and $\mathcal{R}_\kappa f$ to be undersampled and therefore aliased. Then the reconstructed image in $|x|<R$ will not have a singularity corresponding to that one.

\begin{figure}[h!]\,\hfill
\begin{subfigure}[t]{.22\linewidth}
\centering
\includegraphics[height=.15\textheight]{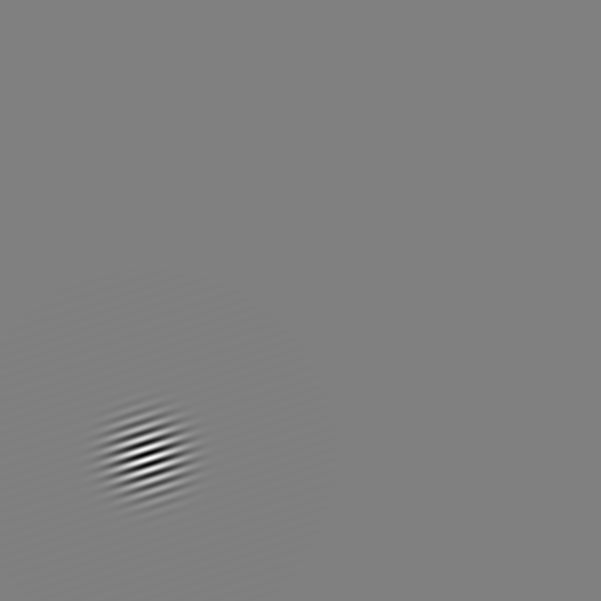}
\caption{$f$ }\label{fig1a}
\end{subfigure}\hfill
\begin{subfigure}[t]{.39\linewidth}
\centering
\includegraphics[height=.15\textheight]{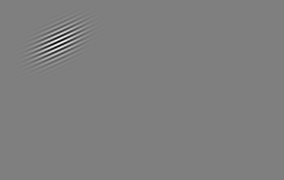}
\caption{$\mathcal{R}f$ on $[\pi/2,\pi]\times[0,1/2]$ oversampled}\label{fig1b}
\end{subfigure}\hfill
\begin{subfigure}[t]{.22\linewidth}
\centering
\includegraphics[height=.15\textheight]{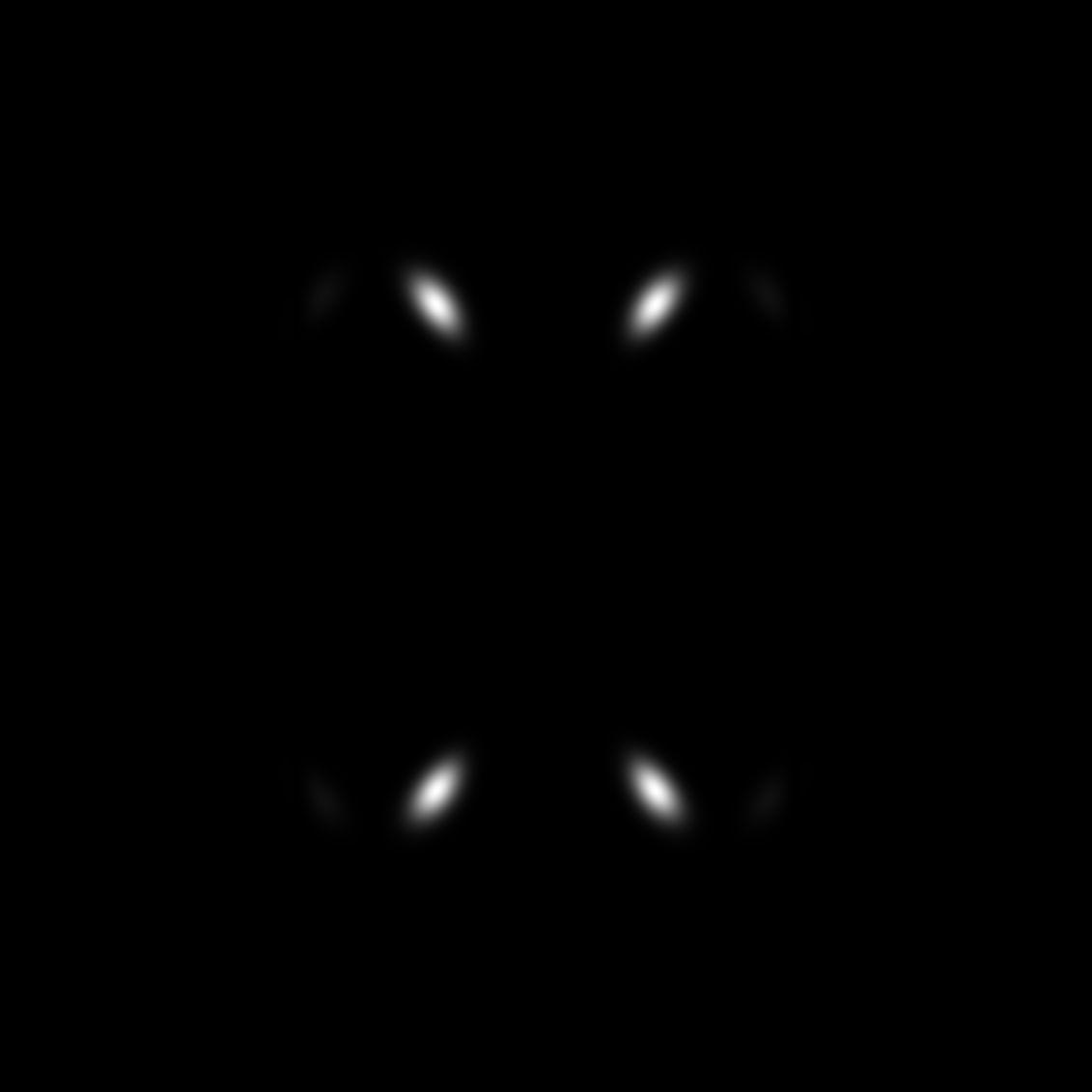}
\caption{$\mathcal{FR}f$ oversampled}\label{fig1c}
\end{subfigure}\hfill \,\\ \vspace{10pt}
\,\hfill
\begin{subfigure}[t]{.22\linewidth}
\centering
\includegraphics[height=.15\textheight]{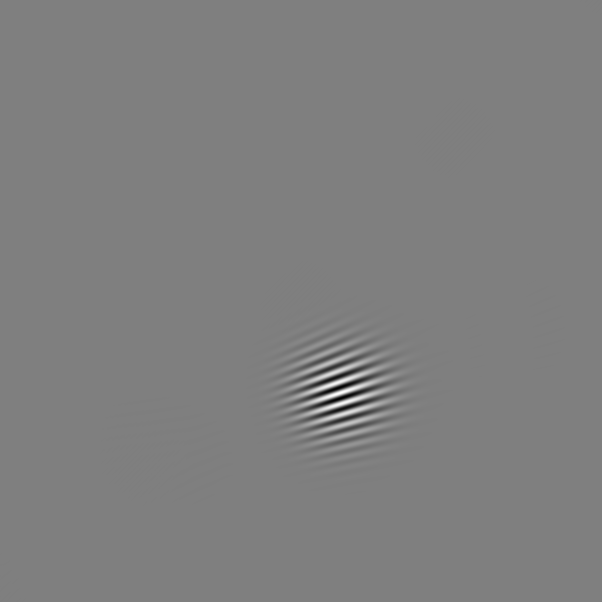}
\caption{$f$ recovered}\label{fig_R_aliasing2a}
\end{subfigure}\hfill
\begin{subfigure}[t]{.39\linewidth}
\centering
\includegraphics[height=.15\textheight]{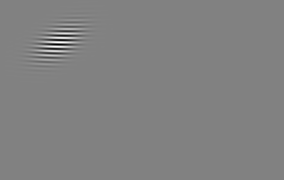}
\caption{$\mathcal{R}f$ on $[\pi/2,\pi]\times[0,1/2]$ undersampled}\label{fig_R_aliasing2b}
\end{subfigure}\hfill
\begin{subfigure}[t]{.22\linewidth}
\centering
\includegraphics[height=.15\textheight]{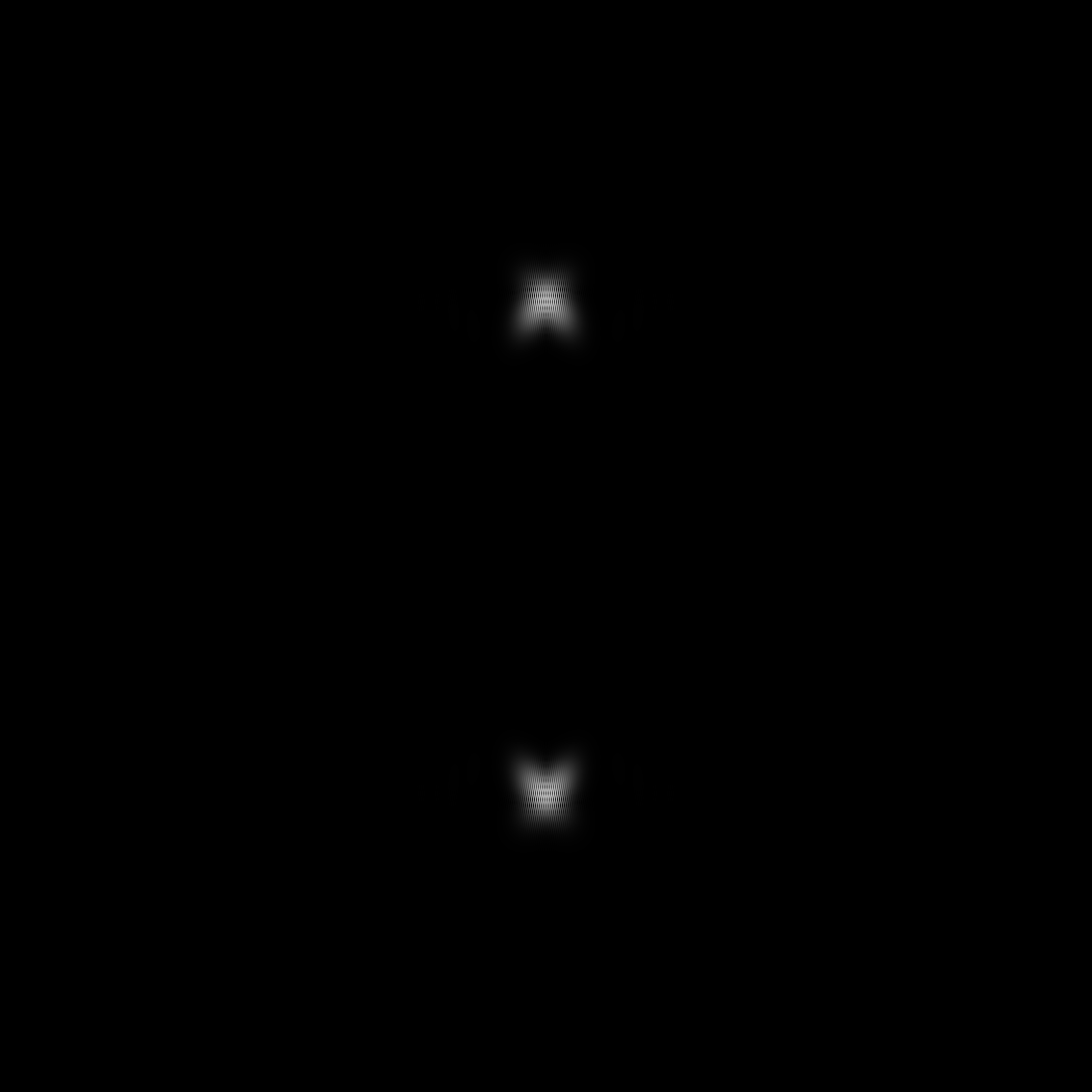}
\caption{$\mathcal{FR}f$ undersampled}\label{fig_R_aliasing2c}
\end{subfigure}\hfill \,
\caption{Top: $f$, $\mathcal{R}f$ and the Fourier transform $\mathcal{FR}f$ of $\mathcal{R}f$ when $\mathcal{R}f$ is angularly undersampled. Bottom: 
 $f$ reconstructed with a back-projection, $\mathcal{R}f$ and $\mathcal{FR}f$  when $\mathcal{R}f$ is angularly undersampled with a $3$ degrees step: the pattern has shifted. 
}\label{fig_R_aliasing2}
\end{figure}

We illustrate this with a numerical example in Figure~\ref{fig_R_aliasing2a}. We choose $f$ to be a coherent state as in Figure~\ref{fig_aliasing}. In Figure~\ref{fig_R_aliasing2}, we plot $f$, a crop of its Radon transform $\mathcal{R}f$, oversampled, on $[\pi/2,\pi]\times [0,1/2]$ (the only other significant part is symmetric to it and we do not show it), and the Fourier transform of $\mathcal{FR}f$. Since $\mathcal{R}f$ is even and real valued, $\mathcal{FR}f$ has two symmetries. A reconstruction of $f$ with oversampled data, not shown, looks almost identical to $f$. Next we undersample $\mathcal{R}f$ using a $s_\varphi=3$ degree step in $\varphi$. 
In  Figure~\ref{fig_R_aliasing2}d, we show the reconstructed $f$ which looks like $f$ shifted along the direction of the pattern. The undersampled $\mathcal{R}f$ used to get reconstruction is shown in (b). 
Compared to (e), the pattern changed its orientation (and the magnitude of its frequency), similarly to the classical aliasing effect illustrated in Figure~\ref{fig_aliasing}. The effect on the reconstructed $f$, see (d), however is very different and in an agreement with \r{RA1}. In (f), we plot $\mathcal{FR}f$ where  $\mathcal{R}f$ now is the aliased version of the Radon transform of $f$. We see that the bright spots where $\mathcal{R}f$ is essentially supported have shifted compared to (c): the ones to the left have shifted to the right and vice versa, as explained earlier.

In this case, only the values $k=\pm1$ in \r{RA3} contribute to singularities because the singularity of $f$ does not satisfy \r{RA4} with $k=0$, i.e., the original singularity is not within the resolution range. 

A similar example, not shown, with the pattern moved close to the center is reconstructed well (see also Figure~\ref{fig_R_resolution}c) is reconstructed well without an artifact even though the artifact computed by \r{RA1} would still fit in the square shown. The reason for it is condition \r{RA2} which for small $|x|$ and the  other parameters unchanged is valid for $k=0$ only. 

\subsubsection{$\mathcal{R}f$ undersampled  in the $p$ variable} Assume that $s_p$ is not small enough to satisfy the sampling conditions but $s_\varphi$ is. The aliasing of $\mathcal{R}_\kappa f$ then can be computed, using \r{R2A} and \r{sh}, to be 
\be{RA3}
(x,\xi)\quad\longmapsto \quad   \Big(x \pm  x\cdot\xi^\perp \left(   \frac1{|\xi|+ 2\pi k/s_p}  -\frac1{|\xi|} \right) 
\frac{\xi^\perp}{|\xi|} ,  \xi+    \frac{2\pi k}{s_p}\frac{\xi}{|\xi|} \Big),
\ee
when $\hat p+(2\pi k/s_p )\in [-\pi/s_p, \pi/s_p]$, i.e., when 
\be{RA4}
|\xi|+2k\pi/s_p\in [-\pi/s_p, \pi/s_p].
\ee
Those are still shifts along $\xi^\perp$ but they are not equally spaced (with $k$). Also, the magnitude of the frequency changes but the direction does not. 
In case of mild aliasing, we have $k=\pm1$ (when we are recovering $f$ in $|x|<R$) and they generate  shifts of different sizes. In general, there are infinitely many artifacts outside the ball $|x|<R$ regardless of the sampling rate and the band limit of $f$ (our criterion whether $Rf$ is aliased or not depends on $R$). 

In Figure~\ref{fig_R_aliasing5}, we present an example where one of the patterns disappears from the computational domain $[-1,1]^2$ due to undersampling in the $p$ variable. The other one remains. 

\begin{figure}[h!]\,\hfill
\begin{subfigure}[t]{.2\linewidth}
\centering
\includegraphics[height=.13\textheight]{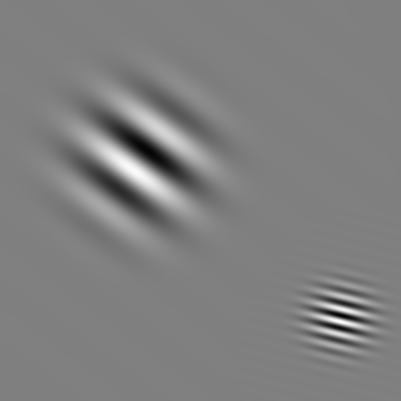}
\caption{$f$ }\label{fig5a}
\end{subfigure}\hfill
\begin{subfigure}[t]{.2\linewidth}
\centering
\includegraphics[height=.13\textheight]{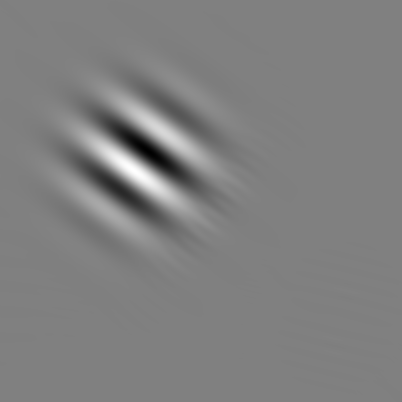}
\caption{$f$ reconstructed}\label{fig5b}
\end{subfigure}\hfill
\begin{subfigure}[t]{.28\linewidth}
\centering
\includegraphics[height=.13\textheight]{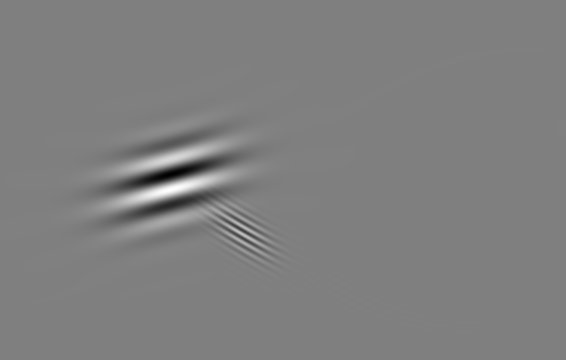}
\caption{$\mathcal{R}f$ on $[0,\pi]\times[-1,1]$ oversampled}\label{fig5c}
\end{subfigure}\hfill
\begin{subfigure}[t]{.28\linewidth}
\centering
\includegraphics[height=.13\textheight]{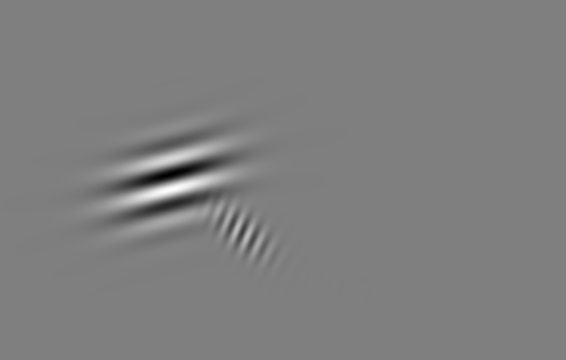}
\caption{$\mathcal{R}f$ on $[0,\pi]\times[-1,1]$ undersampled}\label{fig5d}
\end{subfigure}\hfill
\vspace{-10pt}
\,\hfill
\caption{(a) $f$ and (b) reconstructed $f$ with $\mathcal{R}f$ undersampled in $p$. (c)   $\mathcal{R}f$ and (d) $\mathcal{R}f$ aliased. 
}\label{fig_R_aliasing5}
\end{figure}

\subsection{(iv) Locally averaged measurements} \label{sec_R_AI}
Assume now that we measure $Q_h \mathcal{R}_\kappa f$ with $Q_h$ an \HPDO\ of order $(0,0)$ (or simply a convolution) limiting the frequency set $\Sigma_h(\mathcal{R}_\kappa f)$ of the data. If $q_0(\varphi,p,\hat p,\hat \varphi)$ is the principal symbol of $Q_h$, then by Proposition~\ref{prop_av},  a backprojection reconstructs  $P_hf$ where  $P_h$ has a principal symbol 
\be{RAI1}
p_0(x,\xi) = \frac12 q_0\circ C_+ + \frac12 q_0\circ C_-.
\ee
If, in particular, $Q_h$ is a convolution with  a 
kernel of the type $q_0=\psi(a\hat\varphi^2+b\hat p^2)$ with  $\varphi\in C_0^\infty$ and decreasing, then 
\be{Av1}
p_0(x,\xi) = \psi\big(a|\xi|^2 + b|x\cdot\xi^\perp|^2\big).
\ee
This symbol takes its smallest  values for $x$ near the boundary and $\xi\perp x$, and those are the covectors with the lowest resolution as well. The effect of $Q_h$ is then non-uniform, it blurs $f$  the most at those covectors. If we want a uniform blur, then we choose $p_0=\psi(a|\xi|^2)$ and compute $q_0=\psi(a\hat p^2)$. This is not surprising in view of the classical intertwining property $\d_p^2 \mathcal{R}_\kappa= \mathcal{R}_\kappa\Delta$ when $\kappa=1$ (true modulo lower order terms for general $\kappa$).  In other words, only convolving w.r.t.\ the $p$ variable is needed. This means integrating over ``blurred lines''. 
If $\psi$ limits $\WFH(Q_h \mathcal{R}_\kappa f)$ to, say, $|\hat p|\le B'$, then this limits $\hat\varphi$ as well by the first inequality on \r{4.1}, to $|\varphi|\le |\hat p|$. Therefore, $(\hat\varphi, \hat p)$ are restricted to a smaller cone of the type \r{4.1} which imposes sampling requirements as above. Then we can recover stably $P_hf$. 

In Figure~\ref{fig_R_average}, we show a reconstructed image with data averaged in the $p$ variable (then $b=0$ in \r{Av1}) and the angular variables (then $a=0$ in \r{Av1}) . Note that in Figure~\ref{fig3c} the image is blurred  angularly but in contrast to Figure~\ref{fig2c}, there are no aliasing artifacts. 

\begin{figure}[h!]{} 
\hfill
\begin{subfigure}[t]{.22\linewidth}
\centering
\includegraphics[height=.15\textheight]{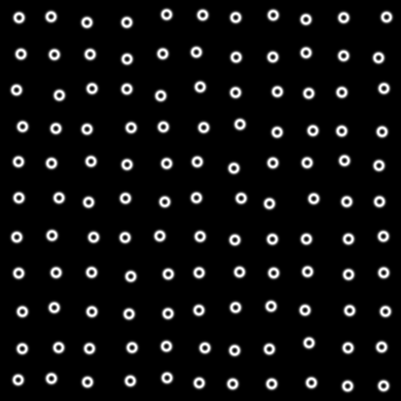}
\caption{$f$ on $[-1,1]^2$}\label{fig3a}
\end{subfigure}\hfill
\begin{subfigure}[t]{.22\linewidth}
\centering
\includegraphics[height=.15\textheight]{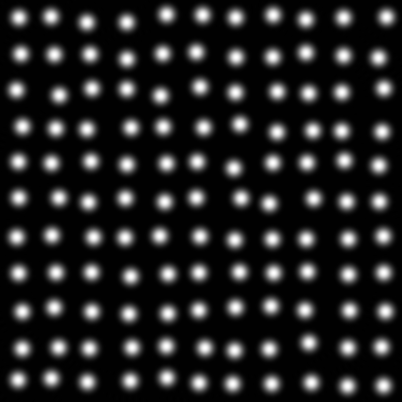}
\caption{reconstruction with $\mathcal{R}f$ averaged in $p$}\label{fig3b}
\end{subfigure}\hfill
\begin{subfigure}[t]{.22\linewidth}
\centering
\includegraphics[height=.15\textheight]{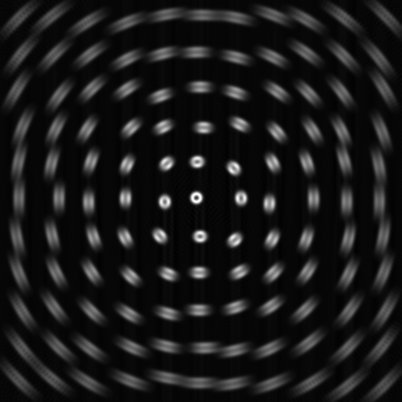}
\caption{reconstruction with $\mathcal{R}f$ averaged in $\varphi$}\label{fig3c}
\end{subfigure}
\hfill \, 
\caption{$f$ and a reconstructed $f$ on $[-1,1]^2$ with data averaged in the $p$ and the $\varphi$ variable.
}  \label{fig_R_average}
\end{figure}

\section{The X-ray/Radon transform $R$ in the plane in fan-beam coordinates} \label{sec_FB}
\subsection{$\mathcal{R}_\kappa$ as an FIO} 
We parametrize $\mathcal{R}_\kappa$ now by the so-called fan-beam coordinates. Each line is represented by an initial point $R\omega(\alpha)$ on the boundary of $B(0,R)$, where $f$ is supported, and by an initial direction making angle $\beta$ with the radial line through the same point,  see Figure~\ref{fig_R3}. It is straightforward to see that this direction is given by  $\omega(\alpha+\beta)$. Then the lines through $B(0,R)$ are given by 
\be{R2a}
x\cdot\omega(\alpha+\beta-\pi/2) = R\sin\beta, \quad \alpha\in [0,2\pi), \; \beta\in [-\pi/2,\pi/2].
\ee
\begin{figure}[h!] 
\begin{center}
	\includegraphics[page=5]{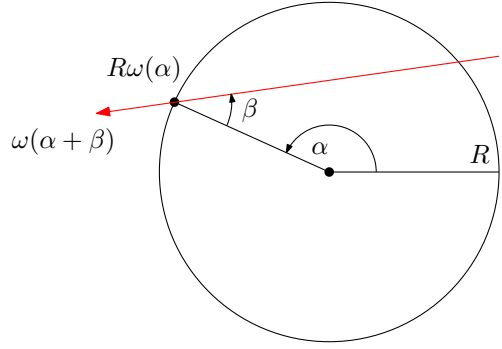}
\end{center}
\caption{\small The fan-beam coordinates.}
\label{fig_R3}
\end{figure}
This allows us to conclude that in this representation $\mathcal{R}_\kappa$ is an FIO again (being FIO is invariant under diffeomorphic changes) and to compute its canonical relation using the rules of transforming covectors. We will do it directly however. 
We regard $\alpha$ as belonging to $\R$ modulo $2\pi\mathbf{Z}$. 
The relationship between this and the parallel geometry parameterization $(\varphi,p)$ is given by
\be{R1a}
\varphi =\alpha+\beta-\pi/2, \quad p=R\sin\beta.  
\ee
Each undirected line is given by a pair $(\varphi,p)$ and $(\varphi+\pi,-p)$; which in the parallel beam coordinates corresponds to $(\alpha,\beta)$ and $(\alpha +2\beta-\pi,-\beta)$. 
The Schwartz kernel of $\mathcal{R}_\kappa$ in this parameterization is a smooth factor times a delta function on the manifold \r{R2a}. 
As above, when $\kappa=1$, we write $\mathcal{R}=\mathcal{R}_\kappa$.  Then 
\be{symm1}
\mathcal{R}(\alpha,\beta) = \mathcal{R}(\alpha +2\beta-\pi,-\beta).
\ee
In general, that change of $(\alpha,\beta)$ is a symmetry of \r{R2a}. 
The canonical relation is given by 
\be{R3}
C=\bigg\{\Big(\alpha,\beta, \underbrace{\lambda (- x\cdot\omega( \alpha+\beta ))  }_{\hat \alpha}, \underbrace{\lambda R\cos\beta-\lambda x\cdot\omega( \alpha+\beta )  }_{\hat\beta}, x, \underbrace{\lambda\omega (\alpha+\beta-\pi/2)  }_{\hat x=\xi}\Big),\; \lambda\not=0\bigg\}.
\ee 
Therefore, with $\omega=\omega(\alpha+\beta)$, we have  $\xi = -\lambda\omega^\perp$, $\hat\alpha = -\lambda    \omega\cdot x= x\cdot\xi^\perp$, $\hat\beta = \lambda R\cos\beta +\hat\alpha$. If $\lambda>0$, then $\lambda=|\xi|$ and then 
$\lambda R\cos\beta=|\xi|\sqrt{R^2-(x\cdot \xi/|\xi|)^2}$. 
Also, by \r{R2a}, $x\cdot\xi= R|\xi|\sin\beta$. 
For the dual variables, we have $\hat\beta= |\xi|\sqrt{R^2-(x\cdot \xi/|\xi|)^2}+\hat\alpha$.

\begin{figure}[h!]\hfill
\begin{subfigure}[t]{.26\linewidth}
\centering
\includegraphics[scale=1.5, page=9]{sampling_pics}
\caption{$f$ on $[-1,1]^2$}\label{fig4a}
\end{subfigure}\hfill
\begin{subfigure}[t]{.49\linewidth}
\centering
\includegraphics[scale=1.5,page=8]{sampling_pics}
\caption{$\mathcal{R}f$}\label{fig4b}
\end{subfigure}  \hfill 
\begin{subfigure}[t]{.15\linewidth}
\centering
\includegraphics[scale=0.5625]{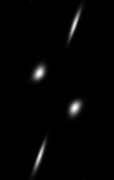}
\caption{$\widehat{\mathcal{R}f}$}\label{fig4c}
\end{subfigure} \hfill  
\caption{The canonical relation of $\mathcal{R}$ in fan-beam coordinates. (a) a coherent state $f$. (b): the image of $(x,\xi)\in \WFH(f)$ under $C_+$ and $C_-$.
}  \label{fig_R4}
\end{figure}

If $\lambda<0$, we get another solution by formally replacing $|\xi|$ by $-|\xi|$. Therefore, the  canonical relations $C_\pm$ are given by
\be{R4}
\beta = \pm \sin^{-1} \frac{x\cdot\xi}{R|\xi|}, \quad \alpha = \arg\xi 
-\beta\pm \frac\pi2 \quad
\hat\alpha = x\cdot\xi^\perp, \quad   \hat\beta= \pm |\xi|\sqrt{R^2-(x\cdot \xi/|\xi|)^2}+\hat\alpha.
\ee
Then $C_\pm$ are isomorphic under the symmetry mentioned above lifted to the tangent bundle
\be{symm2}
(\alpha, \beta, \hat\alpha,\hat\beta) \quad \longmapsto \quad (\alpha+2\beta-\pi , -\beta, \hat\alpha,2\hat\alpha-\hat\beta).
\ee
 We illustrate the canonical relations on Figure~\ref{fig_R4}. On Figure~\ref{fig_f_gaussian}, $\pi_2\circ C_\pm(x,\xi)$ are marked by crosses. 

The inverses $C_\pm^{-1}$ are given by
\be{R4cc}
x = R\sin\beta \,\omega(\alpha+\beta-\pi/2) - \frac{\hat\alpha } { \hat\beta-\hat\alpha}R\cos\beta \,\omega(\alpha+\beta), \quad 
\xi =\frac{ \hat\beta-\hat\alpha}{R\cos\beta} \omega(\alpha+\beta-\pi/2) .
\ee
In particular, we recover the well known fact that $C$ is 1-to-2, as in the previous case.

\subsection{(i) Sampling} 
We assume \r{R_as} again. 

\subsubsection{Sampling on a rectangular lattice.} 
The smallest rectangle including the range of $\hat\alpha$ and $\hat\beta$ if $|x|\le R$, $|\xi|\le B$ is 
\be{R4a}
RB[-1,1    ]     \times   2 RB[-1,1].
\ee
Therefore, 
for the relative sampling rates $s_\alpha$ and $s_\beta$ in the $\alpha$ and in the $\beta$ variables, respectively,   in $[0,2\pi)\times [-\pi/2, \pi/2]$, we have the Nyquist limits
\be{s_ab}
s_\alpha<\frac{\pi}{RB}, \quad s_\beta< \frac{\pi}{2RB},
\ee
compare with \r{R_N}. 
 This means taking more than $2RB/h\times 2RB/h$ samples. This is $\pi$ times more than in the parallel geometry case. 
For a recovery of $\WFH(f)\setminus 0$, we need  a half of that. 

To analyze the actual range, it is enough to analyze the range of $  (\hat\alpha,\hat\beta')=   (\hat\alpha,\hat\beta - \hat\alpha)$, i.e., the l.h.s.\ of \r{C1}. 
Notice first that on $C$, one can parameterize the line corresponding to $(\alpha,\beta)$ as 
\be{range1}
x = R\omega(\alpha) -t\omega(\alpha+\beta), \quad 0\le t\le 2R\cos\beta. 
\ee
Then
\[
\hat\alpha = -|\xi|(R\cos\beta-t), \quad \hat\beta' = \pm |\xi| R\cos\beta, \quad 0\le t\le 2R\cos\beta. 
\]
Therefore, for a fixed $(\alpha,\beta)$, the range of $(\hat\alpha,\hat\beta')$ is independent of $\alpha$ and when $\xi$  varies over $|\xi|\le B$, that range fills the double triangle  $|\hat\alpha|\le |\hat\beta'|\le RB \cos\beta$. Over the whole range of $\beta$, see \r{R2a}, this fills  $|\hat\alpha|\le |\hat\beta'|\le RB $. Then we can get the range of  $  (\hat\alpha,\hat\beta)$, we take the inverse linear transformation. 

\begin{figure}[h!] 
  \centering\hfill
	\includegraphics[page=1]{sampling_pics}\hfill
  \includegraphics[height=.158\textheight]{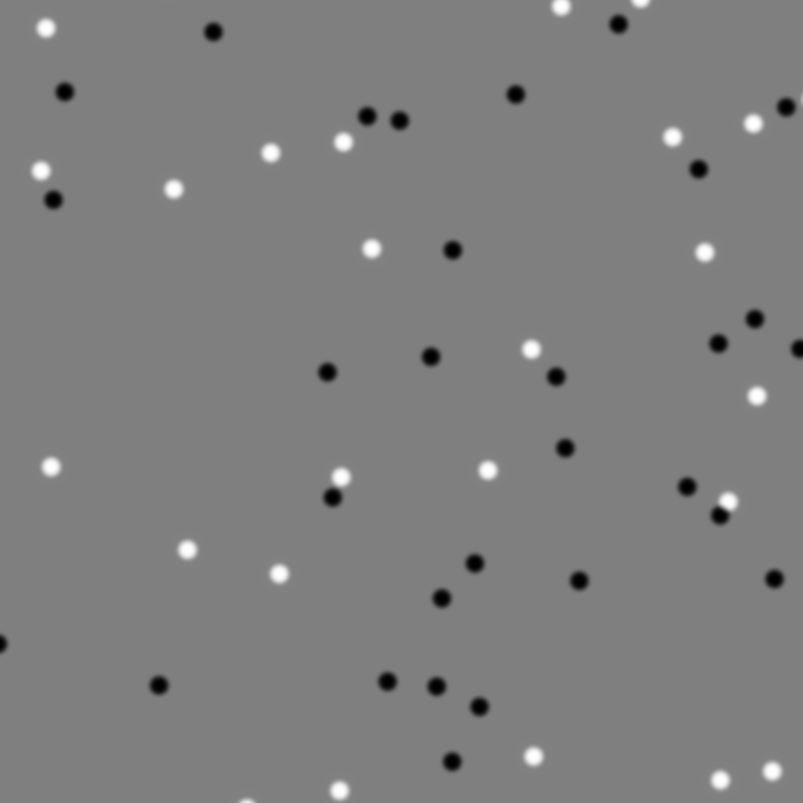}\hfill
    \includegraphics[ height=.158\textheight]{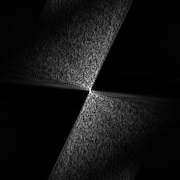}\hfill \, 
\caption{\small Left: the range of $\mathcal{F}_hR$ for $|\xi|\le B$ in fan-beam coordinates.  Center: $f$ as a sum of  randomly placed small Gaussians with mean value zero. 
Right: $|\mathcal{F}_hf|$ in fan-beam coordinates on a log scale.}
\label{fig_f_gaussian}
\end{figure}
In Figure~\ref{fig_f_gaussian}, we show the range for $|\xi|\le B$, and a numerically computed $|\mathcal{F}_hf|$ of $f$ representing a sum of several well concentrated randomly placed Gaussians. 
It has the symmetry \r{symm2}. 
This result can be obtained from the parallel geometry analysis, of course, see \r{4.1} and Figure~\ref{fig_R}, by the change of variables on the cotangent bundle induced by \r{R2a}.

As in the previous case, we can tile the plane with the regions in Figure~\ref{fig_f_gaussian} on the left by taking translations by $(RB,0)$ and $(0,2RB)$. If $2\pi (W^*)^{-1}$ has those columns, then 
\be{R4.2}
W = \frac{\pi}{RB} \begin{pmatrix} 2 &0 \\0&1 \end{pmatrix}.
\ee
Then by Theorem~\ref{thm_sc}, the most efficient sampling would be  on a grid $ shW\mathbf{Z}^n$ with $s<1$, see also \cite{ Natterer-book, Natterer-sampling1993}. This is $\sim 4N_f$, see \r{Nf} and is twice as sparse in each dimension compared to the previous criterion. 
 For a recovery of $\WFH(f)\setminus 0$, we need  a half of that, i.e., $\sim 2N_f$, which is twice as much as the sharp bound in Corollary~\ref{cor_num}.  Note that this however requires a reconstruction formula  of the type \r{2.1} with $\chi$ there having a Fourier transform supported in the gray region in Figure~\ref{fig_f_gaussian} on the left, and equal to one on $\WFH(f)$ instead of the formula based on the sinc functions. The reason that the number of points is not $\sim N_f$ is clear from the analysis below and from Figure~\ref{Radon_FB} as well.

\begin{figure}[h!] 
  \centering
	\includegraphics[trim = 0 0 0 0, clip,scale=0.3]{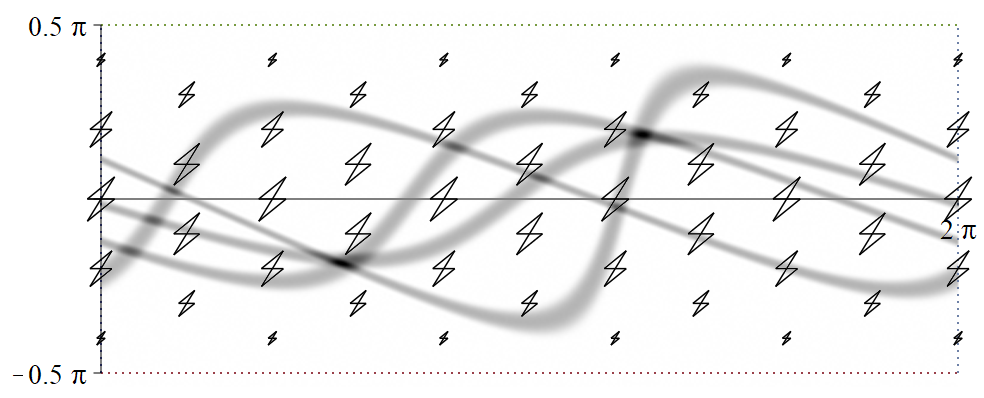}
\caption{\small $ \mathcal{R}f(\alpha,\beta)$ for $f$ consisting of four randomly placed small Gaussians in the unit ball. The double triangles, shown in the upper half only, represent $\WFH( \mathcal{R}f)$ localized at their vertices $(\alpha,\beta)$.}
\label{Radon_FB}
\end{figure}

In Figure~\ref{Radon_FB}, we plot $\mathcal{R}f(\alpha,\beta)$ on $[0,2\pi]\times [-\pi/2,\pi/2]$ for $f$ consisting of four small Gaussians. We also plot the range of $\WFH(f)$ for all possible $f$ satisfying \r{R_as} at each $(\alpha,\beta)$, i.e, we plot \r{range1}. The double triangles represent the set of all possible conormals of singularities in $\mathcal{R}f(\alpha,\beta)$ with their lengths. As we can see (and prove), the highest oscillations can occur on the $\beta=0$ line and they are along the direction $(1,2)$. If we do non-uniform sampling, this is where we need the highest rate. This is also confirmed by the shape and the thickness of the stripes there. 

The analysis above and Figure~\ref{Radon_FB}, suggests the following improvement: we need to sample denser when $\beta$ is closer to $0$. In fact, one can set $p=R\sin\beta$ (the $R$ factor is not essential), as in \r{R1a} and sample uniformly in $p$. 
We will explore that route in a forthcoming paper. 

\subsection{(ii) Resolution limit given the sampling rate of $\mathcal{R}_\kappa f$}  
 Let $s_\alpha$, $s_\beta$ be the relative sampling rates in $\alpha$ and $\beta$, respectively. The Nyquist limit for $(\hat\alpha, \hat\beta)$ is given by $|\hat\alpha| <\pi/s_\alpha$, $|\hat\beta| <\pi/s_\beta$. By \r{R4}, this is equivalent to 
\be{RP3}
|x\cdot\xi^\perp|<\pi/s_\alpha, \quad \left| \pm \sqrt{ R^2|\xi|^2-(x\cdot\xi)^2 }+ x\cdot \xi^\perp\right| <\pi/s_\beta. 
\ee
Let $\theta$ be the angle which $\xi$ makes with $x$ when $x\not=0$, more precisely, $\theta$ is such that $x\cdot\xi=|x|\cos\theta$, $ x\cdot\xi^\perp= |x|\sin \theta$. Then   
\[
 |x||\xi| |\sin \theta| <\pi/s_\alpha, \quad |\xi| \left|\pm \sqrt{R^2-|x|^2\cos^2\theta}+ |x| \sin\theta\right|<\pi/s_\beta. 
\]
We plot the regions determined by the inequalities above with $s_\alpha=2s_\beta$, see \r{s_ab} to get the resolution diagram plotted on Figure~\ref{res_diagram}, where $R=1$ . 
\begin{figure}[h!] 
  \centering
    \includegraphics[trim = 0 120 0 120, clip ,height=.16\textheight, page=6]{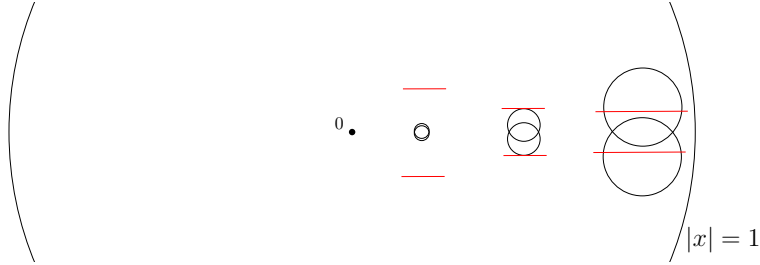}
\caption{\small  The resolution diagram of $\mathcal{R}_\kappa$ in fan-beam coordinates in the unit ball. For each $x$, the circles around it represent the frequency limit  imposed by $s_\beta$ as a function of the direction. A small radius means small frequency and therefore a smaller  resolution. The horizontal lines mark the resolution limit imposed by $s_\alpha$. 
The diagram is rotationally symmetric.}
\label{res_diagram}
\end{figure}
The horizontal lines represent the resolution limit imposed by $s_\alpha$. It is greatest near the origin and decreases (in vertical direction) away from the center. As it can be seen from \r{RP3}, the second inequality in \r{RP3} (satisfied for both signs) implies the first one; so that actual resolution is controlled by the double circles there except at $|x|=1$, where the lines are tangent to the lens shaped region.  
Next, the symmetry relation \r{symm2} has an interesting implication. Let $(\alpha_\pm, \beta_\pm, \hat\alpha_\pm, \hat\beta_\pm)$ be the image of $(x,\xi)$ under $C_\pm$, related by \r{symm2}, see Figure~\ref{fig_R4}.  Then the resolution limit on $f$ at $x$ in various directions posed by the sampling rate $s_\beta$ near $(\alpha_+, \beta_+)$ \textit{and}  near $(\alpha_-, \beta_-)$ are   given by \r{RP3} with both choices of the signs $\pm$. On Figure~\ref{res_diagram}, they are represented by the intersection of the two disks at each $x$. 
We can see that near the origin, it is quite small and close to isotropic. Near $|x|=1$, the resolution increases and it is better for $\xi$ close to radial (for example, for circular lines). On the other hand, since $C$ is $1$-to-$2$, we need only one of  $(\alpha_\pm, \beta_\pm, \hat\alpha_\pm, \hat\beta_\pm)$ to recover $(x,\xi)$. Therefore, the data $\mathcal{R}_\chi f$ actually contains stable information about recovery the singularities of $f$ in the union of those disks, instead of its intersection, if we can use that information.  It follows from \r{R4} that the better resolution is coming from that of the two lines through $x$ normal to $\xi$ with a source $R\omega(\alpha)$ which is closer to $x$. 
In the example in Figure~\ref{fig_R4}, for example, if the sampling rate of $\mathcal{R}_\kappa f$ is not sufficient to sample the right-hand pattern, we can just cut it off smoothly and use the other one only. 

Another approach is to note that
the shape the double triangles in Figure~\ref{fig_f_gaussian} allow for undersampling up to half of the rate, and when  there is aliasing (overlapped shifted triangles), it affects both images of every $(x,\xi)$ equally. To benefit from this however, 
  instead of using a sinc type of interpolation, we need to use $\chi$  in \r{2.3} with $\hat\chi$ supported in the double triangle in Figure~\ref{fig_f_gaussian}. Even better, we can sample on a parallelogram type of lattice as in \r{R4.2}.  Unlike \cite{Natterer-book, Natterer-sampling1993} we could have a non-uniform sampling set as in Section~\ref{sec_NU1} by dividing $[0,2\pi]\times [-\pi/2,\pi/2]$ into horizontal strips and using  parallelogram-like lattices in each one of varying densities using the fact that the wave front set size decreases when approaching $\beta=\pm\pi/2$. 
  
In Figure~\ref{fig_f_two_coherent} below, we present numerical evidence of this analysis. The phantom consists of two coherent states; each one a parallel transport of the other. Their wave front sets are localized in the $x$ and the $\xi$ variables. For the state on the left, we have $x$ almost parallel to $\xi$ on the wave front set, while for the state on the top, $x$ is almost perpendicular to $\xi$. As a result, the singularities of the first state are mapped to the lower frequency ones on the plot of $\mathcal{R}f$ closer to the corners. The state on the top creates the higher frequency oscillations of $\mathcal{R} f$ along the equatorial line of the plot of $\mathcal{R} f$. The Fourier transform on the right in Figure~\ref{fig_f_two_coherent} confirms that --- the four streaks closer to the borders correspond to the top phantom. Note that the horizontal axis in the Fourier transform plot is stretched twice compared to Figure~\ref{fig_f_gaussian} because the sampling requirement requires the same number of points on each axis, and then the discrete Fourier transform maps a square to a square. 

\begin{figure}[h!] 
  \centering\hfill 
\begin{subfigure}[t]{.2 \linewidth}
\centering\hfill 
\includegraphics[height=.15\textheight]{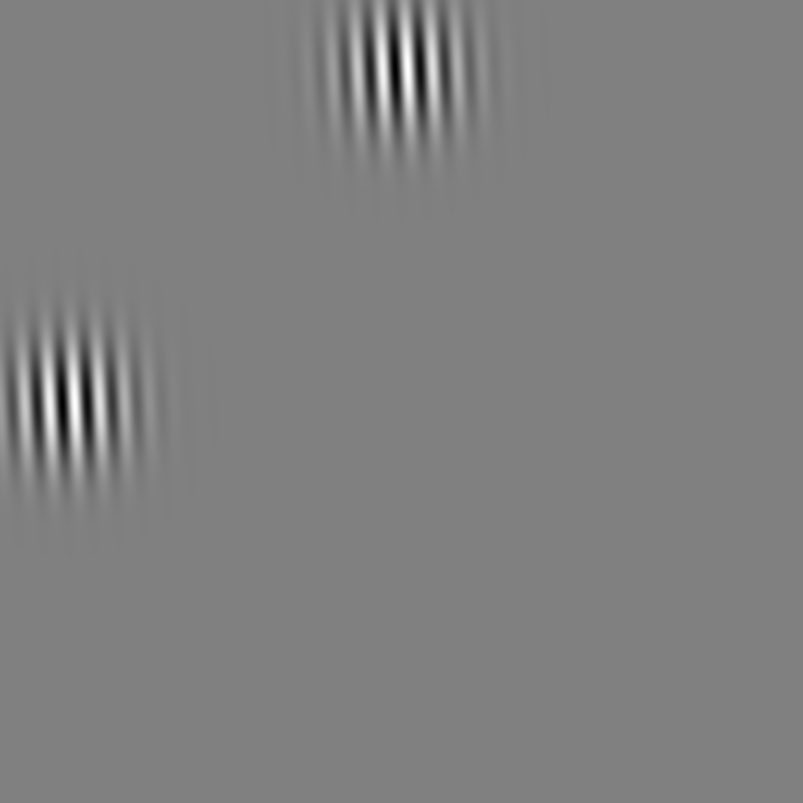}
\caption{$f$ on $[-1,1]^2$}\label{fig55a}
\end{subfigure}\hfill
\begin{subfigure}[t]{.2 \linewidth}
\centering
\includegraphics[height=.15\textheight]{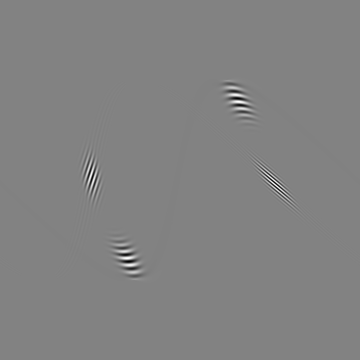}
\caption{$\mathcal{R}f$}\label{fig55b}
\end{subfigure}\hfill
\begin{subfigure}[t]{.2 \linewidth}
\centering
\includegraphics[height=.15\textheight]{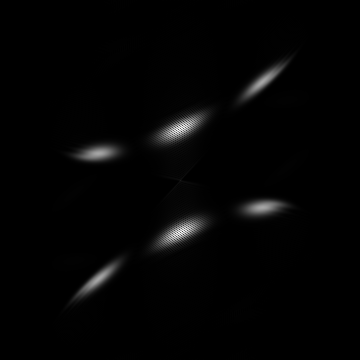}
\caption{$|\mathcal{FR}f|$}\label{fig55c}
\end{subfigure}\hfill\, 
\caption{\small (a): $f$ having $\WFH(f)$ at two  points.  (b): $\mathcal{R}f$ in fan-beam coordinates. (c): $|\mathcal{F}_h \mathcal{R}f|$. The two  patterns on the central horizontal line correspond to the phantom on the top. They are close to be critically sampled. The two patterns on the diagonal correspond to the phantom on the right.}
\label{fig_f_two_coherent}
\end{figure}

\subsection{(iii) Aliasing artifacts} 
If $\mathcal{R}_\kappa f$ is undersampled in either variable, we would get aliasing artifacts as h-FIOs related to shifts of $\hat\alpha$ and $\hat\beta$, see \r{sh},  \r{A9} Section~\ref{sec_alias_FIO}. By \r{R4cc} this would create shifts in the $x$ variable along $\xi^\perp$ and a possible change of the magnitude of $\xi$ but not its direction. We observed similar effects in the parallel parameterization case. In  Figure~\ref{fig_R_FB_AA}b one can see that the aliasing artifacts are extended outside the location of the ``doughnuts'' there.

\subsection{(iv) Averaged measurements} As in section~\ref{sec_R_AI}, assume we measure $Q_h\mathcal{R}_\kappa f$ with $Q_h$ an \HPDO\ of order $(0,0)$. If $q_0$ is the principal symbol of $Q_h$, then a backprojection reconstructs $P_hf$ with $P_h$ having principal symbol as in \r{RAI1}. In particular, if $q_0=\psi(a|\hat\alpha|^2+b|\hat\beta|^2)$, then 
\be{RFB1}
\begin{split}
q_0 &= \frac12\psi \Big(a |x\cdot \xi^\perp|^2 + b \big|x\cdot \xi^\perp+ \sqrt{R^2 |\xi|^2-(x\cdot \xi)^2}\big|^2\Big)\\
 &\quad {}+ \frac12\psi \Big(a |x\cdot \xi^\perp|^2 + b \big|x\cdot \xi^\perp -  \sqrt{R^2 |\xi|^2-(x\cdot \xi)^2}\big|^2\Big).
\end{split}
\ee
This formula reveals something interesting, similar to the observations above: the loss of resolution coming from each term is different. For each $(x,\xi)$, the reconstructed $f$ is $q_0(x,hD)f$ plus a lower order term, which is a sum of two with different (and direction dependent) losses of resolution. Let us say that $\psi$ is radial and decreasing as $r=|x|$ increases.  If $x\cdot\xi^\perp>0$, then the first term attenuates at that frequency more than the second one, and vice versa. Therefore, the reconstruction with full data in specific regions and directions would have less resolution that one with partial data. This is also illustrated in  Figure~\ref{res_diagram}: the intersection of the circles there reflects the resolution limit if we use full data and the union --- the resolution limit with partial data chosen to maximize the resolution. To take advantage of that, we would need to take a \HPDO\ $Q_h$, not just a convolution. 

In Figure~\ref{fig_R_FB_average} we show an example. In (b), we show the reconstructed $f$ with $\mathcal{R}f$ averaged in $\alpha$. 
\begin{figure}[h!]{} 
\hfill
\begin{subfigure}[t]{.2 \linewidth}
\centering
\includegraphics[height=.15\textheight]{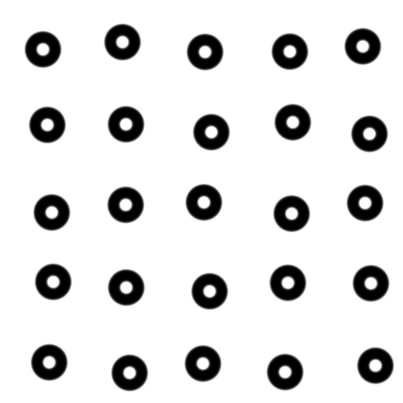}
\caption{$f$ on $[-1,1]^2$}\label{fig6a}
\end{subfigure}\hfill
\begin{subfigure}[t]{.2 \linewidth}
\centering
\includegraphics[height=.15\textheight]{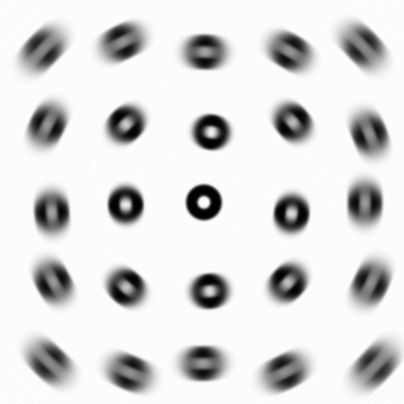}
\caption{$f_\text{rec}$ with $\mathcal{R}f$ averaged in $\alpha$}\label{fig6b}
\end{subfigure}\hfill
\begin{subfigure}[t]{.2 \linewidth}
\centering
\includegraphics[height=.15\textheight]{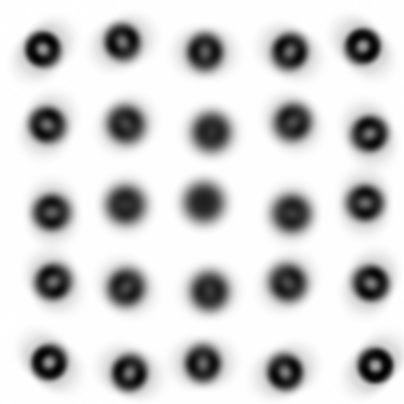}
\caption{$f_\text{rec}$ with $\mathcal{R}f$ averaged in $\beta$}\label{fig6c}
\end{subfigure}
\hfill 
\begin{subfigure}[t]{.2 \linewidth}
\centering
\includegraphics[height=.15\textheight]{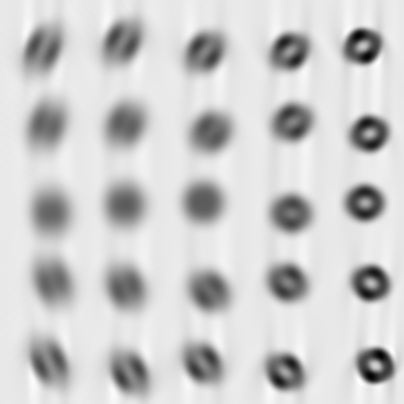}
\caption{$f_\text{rec}$ with $\mathcal{R}f$ averaged in $\beta$ and half-data}\label{fig6d}
\end{subfigure}\hfill\, \ \  
\caption{$f$ and a reconstructed $f_\text{rec}$ on $[-1,1]^2$ with data on the circumscribed circle and  data averaged in the $\alpha$ and in the $\beta$ variable. In (d), we use $\mathcal{R}f$ with sources on the right-hand part of the circle. 
}  \label{fig_R_FB_average}
\end{figure}
This corresponds to $b=0$ in \r{RFB1}. The worst resolution is where $|x\cdot\xi^\perp|$ is maximized, which happens when $|x|$ is maximized and $x\perp \xi$, like for radial lines close to the boundary. We have the best resolution when $|x\cdot\xi^\perp|$ is small, and if we want that for all directions; this happens near the origin but circular lines away from the origin are resolved well, too. Averaging in $\beta$ is represented by (c) and corresponds to $a=0$ in \r{RFB1}. As explained above, we get a superposition of two images and the understand the plot better, one should look first at (d), where a reconstruction with $\alpha$ restricted to $[-\pi/2,\pi/2]$ (the r.h.s.\ of the circumscribed circle) is shown. There, for ``doughnuts'' closer to the right-hand side, radial lines (where $\xi\perp  x$) are resolved better than circular ones, which corresponds to the union of the disks in Figure~\ref{res_diagram}. On the left (far from the sources), it is the opposite: radial lines are very blurred, while circular ones are better resolved. This corresponds to the intersection of the disks in Figure~\ref{res_diagram} which predicts better resolution for $\xi\parallel x$. Then in (c), we have a superposition of two such images which have a combined resolution in which radial and circular blur are mixed: there is still better resolution of radial lines (but the effect is subtle in this example) and a larger radius blur in circular directions. The effect is stronger near the corners as compared to ``doughnuts'' near the edges but in the center of each side because the former are closer to the circumscribed circle. 
\begin{figure}[h!]{} 
\hfill
\begin{subfigure}[t]{.2 \linewidth}
\centering
\includegraphics[height=.18\textheight]{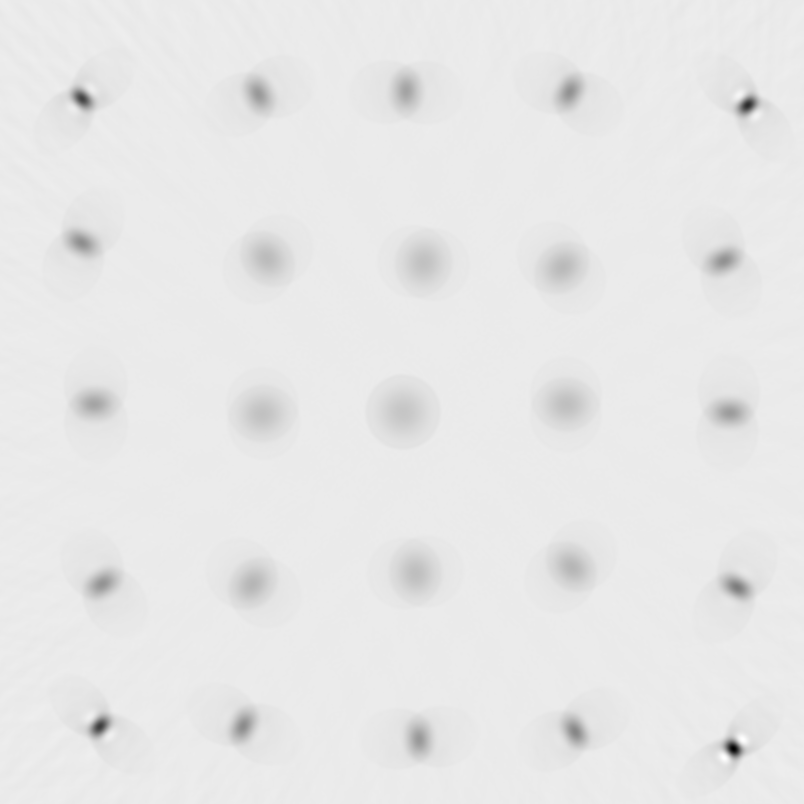}
\caption{$f$ reconstructed on $[-1,1]^2$}\label{fig7a}
\end{subfigure}\hfill
\begin{subfigure}[t]{.25 \linewidth}
\centering
\includegraphics[height=.18\textheight]{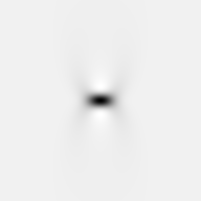}
\caption{the predicted blur kernel at $R(0.88,0)$}\label{fig7b}
\end{subfigure}\hfill\, \ \  
\caption{$f$ is a $5\times 5$ array of almost point-like Gaussians.  (a) the reconstructed $f$ on $[-1,1]^2$ with data on the circumscribed circle with $R=1.45$   averaged in  the $\beta$ variable.; (b) the predicted blur kernel at $R(0.88,0)\approx (1.28,0)$, enlarged. 
}  \label{fig_R_FB_average2}
\end{figure}

To illustrate this effect even better in Figure~\ref{fig_R_FB_average2}, we take $f$ to be a slightly randomized  $5\times 5$ array of very well concentrated Gaussians and apply a Gaussian blur to $\mathcal{R}f$ in $\beta$. The reconstruction is shown in Figure~\ref{fig_R_FB_average2}a. 
Then we compute numerically $\mathcal{F}^{-1}q_0$  see \r{RFB1} with $a=0$, which represents the  convolution kernel  of the reconstructed image at $x=R(0.88,0)$,  treating $x$ as a constant. We plot it  (enlarged) in (b). 
This is what the theory predicts to be the reconstructed image of a delta placed at that $x$. We can see a strong horizontal (i.e., radial) blur plus a fainter vertical (circular) one, spread over a larger area,  with a negative sign.
 In this  grayscale, black corresponds to the maximum and white corresponds to the minimum.  
In 
(a), 
one can see (smaller) similar images in the four corners, which are close to the circumscribed circle. Their orientations are along the radial lines, of course. As $x$ moves closer to the center, the kernel looks more circularly symmetric and gets larger, which can be seen from Figure~\ref{fig_R_FB_average2}a and also from \r{RFB1}. At the origin, it is Gaussian as \r{RFB1} predicts.

\textbf{Anti-aliasing.} In Figure~\ref{fig_R_FB_AA}, we present an example of $f$ undersampled in the $\beta$ variable and them blurred fist (in the same variable) and still undersampled at the same rate.

\begin{figure}[h!]{} 
\hfill
\begin{subfigure}[t]{.2 \linewidth}
\centering
\includegraphics[height=.15\textheight]{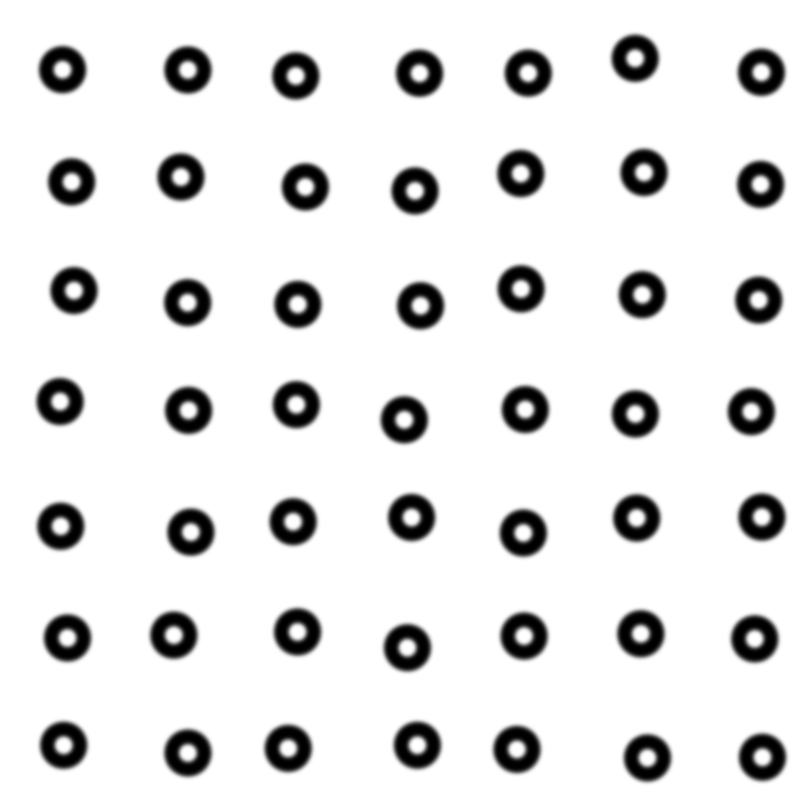}
\caption{original $f$  on $[-1,1]^2$}\label{fig8a}
\end{subfigure}\hfill
\begin{subfigure}[t]{.2 \linewidth}
\centering
\includegraphics[height=.15\textheight]{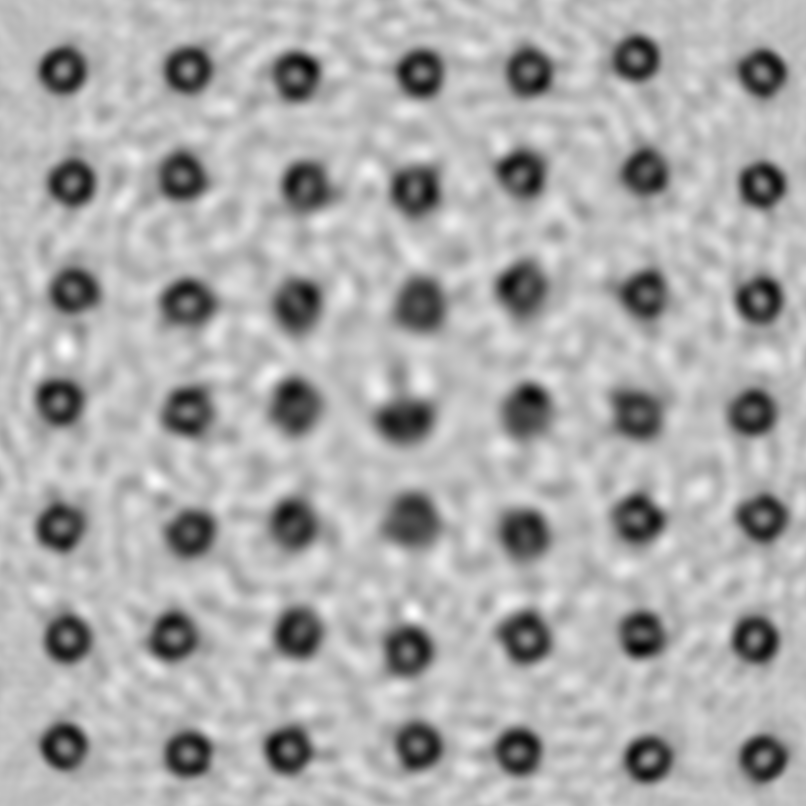}
\caption{a reconstruction $\mathcal{R}f$ undersampled in $\beta$}\label{fig8b}
\end{subfigure}\hfill 
\begin{subfigure}[t]{.2 \linewidth}
\centering
\includegraphics[height=.15\textheight]{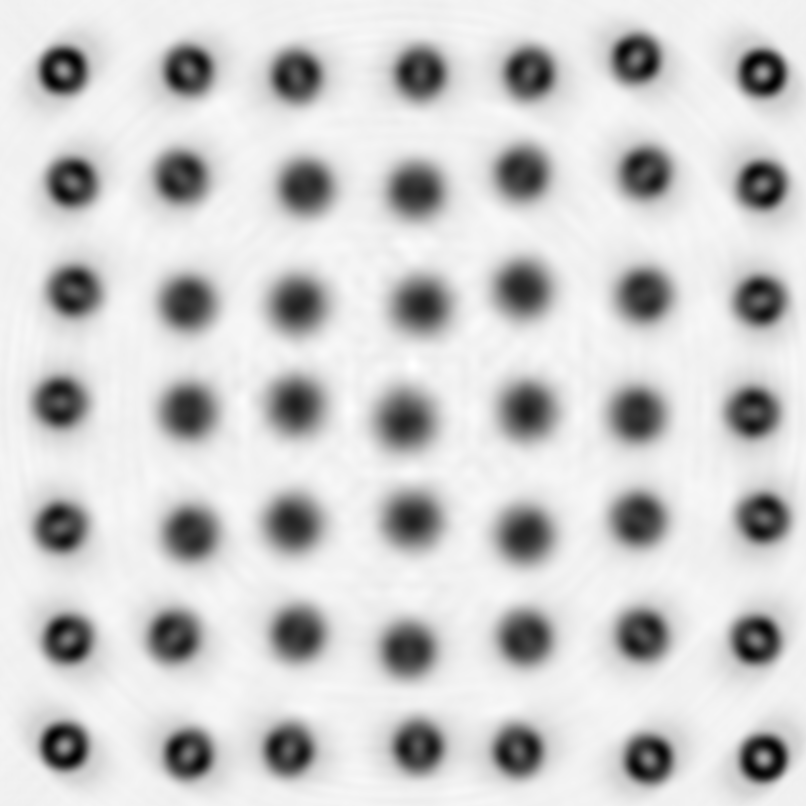}
\caption{an anti-aliased reconstruction}\label{fig8c}
\end{subfigure}\hfill\, \ 
\caption{$f$ is a $7\times 7$ array of ``doughnuts''.  (b) The reconstructed $f$  with data undersampled in  the $\beta$ variable, with $90$ angles:  a step  $s_\beta=2^o$; (b) $f$ is blurred first and then sampled as in (b). 
}  \label{fig_R_FB_AA}
\end{figure}

We see that the aliasing artifacts are mostly suppressed but some resolution is lost. 

\section{Thermo and Photo-Acoustic Tomography} 
Let $\Omega$ be a smooth bounded domain in $\R^n$. 
Let $g_0$ be a  Riemannian metric in $\bar\Omega$,   and let  $c>0$ be smooth.  Assume that $c=1$ and $g_0$ is Euclidean on $\bo$ (not an essential assumption). 
Fix $T>0$. Let $u$ solve the problem
\begin{equation}   \label{TAT1}
\left\{
\begin{array}{rcll}
(\partial_t^2 -c^2\Delta_{g_0} )u &=&0 &  \mbox{in $(0,T)\times \R^n$},\\
u|_{t=0} &=& f,\\ \quad \partial_t u|_{t=0}& =&0.
\end{array}
\right.               
\end{equation} 
Here, $\partial_\nu = \nu^j \partial_{x^j}$, where $\nu$ is the unit 
outer normal vector field on $\bo$.  The function  $f$ is the source which we eventually want to recover. The Neumann boundary conditions correspond to a ``hard reflecting'' boundary $\bo$. In applications, $g_0$ is Euclidean   but the speed $c$ is variable. The analysis applies to more general second order symmetric operator involving a magnetic field and an electric one, as in \cite{SU-thermo}. The metric determining the geometry is $g:=c^{-2}g_0$. We assume that $\bo$ is convex. 

Let $\Gamma\subset\bo$ be a relatively open subset of $\bo$, where the measurements are made. The observation operator is then modeled by
\be{I1}
\Lambda f = u|_{[0,T]\times\Gamma}.
\ee
The inverse problem is to find $f$ given $\Lambda f$.

The natural space for $f$ is the Dirichlet space $H_{D}(\Omega)$ defined as the completion of $C_0^\infty(\Omega)$ under the Dirichlet norm
\be{2.0H}
\|f\|_{H_{D}}^2= \int_\Omega |\nabla u|_g^2 \,\d\Vol.
\ee

The model above assumes acoustic waves propagating freely through $\bo$ where we make measurements. This means that the detectors have to be really small so that we can ignore their size. A different model studied in the literature is to assume that the waves are reflected from the boundary and measured there. To be specific, we may assume zero Neumann conditions on $\bo$ and then $\Lambda f$ would be the Dirichlet data but other combinations are possible. Then we solve first 
\begin{equation}   \label{TAT2}
\left\{
\begin{array}{rcll}
(\partial_t^2 -c^2\Delta_{g_0})u &=&0 &  \mbox{in $(0,T)\times \Omega$},\\
\partial_\nu u|_{(0,T)\times\bo}&=&0,\\
u|_{t=0} &=& f,\\ \quad \partial_t u|_{t=0}& =&0
\end{array}
\right.               
\end{equation} 
and define $\Lambda f$ as in \r{I1} again but this time $u$ is different. 

As shown in \cite{SU-thermo}  
in the first case \r{TAT1}, 
$\Lambda$, restricted to $f$ supported (strictly) in $\Omega$, is an elliptic  FIO of order zero with a canonical relation $C=C_-\cup C_+$, where
\be{TAT-CR}
C_\pm :(x,\xi) \longmapsto\Bigg(\underbrace{\pm s_\pm(x,\xi/|\xi|_g)}_t, \underbrace{\gamma_{x,\xi}( s_+(x,\xi))}_y,  \underbrace{\mp|\xi|_g}_{\hat t=\tau },\underbrace{ \dot\gamma'_{x,\xi}(s_\pm(x,\xi))}_{\hat y= \eta}\Bigg),
 \ee
with $s_\pm(x,\xi)$ being the exit time of the   geodesic starting from $x$ in the direction $\pm g^{-1}\xi$ (this is $\xi$ identified as a vector by the metric $g$) until it reaches $\bo$. We assume that $c^{-2}g_0$ is non-trapping; them those exit times are finite and positively homogeneous in $\xi$ of degree $-1$. Also, $\dot\gamma'$ stands for the orthogonal (in the metric) projection of $\dot\gamma$ to $T\bo$. Clearly, the frequency range of $C$ is  the space-like cone $|\eta|< |\tau|$. The norm $|\xi|_g$ is the norm of $\eta$ as a covector in the metric $g$, and similarly, $|\eta|$ is in the metric on $\bo$ induced by the Euclidean one on $\R^n$. We would have equality if $\xi$ is tangent to $\bo$ but this cannot happen since $\supp f\subset\Omega$. 
 
If we use \r{TAT2} as a model instead (allowing for reflections) it was shown in 
\cite{St-Y-AveragedTR} that the first singularities give rise to an FIO $\Lambda$ with the same canonical relation, which is actually $2\Lambda$ modulo a lower order operator. After each reflection, we get an FIO with a canonical relation of the same type but reflected from the boundary. The sampling requirements are the same, and we will skip the details.

Assume now that $\Sigma_h(f)\subset \{  |\xi|\le B\}$.  We have  $|\xi|_g^2= c^2g_0^{ij}\xi_i\xi_j$. Let $M^2$ be the sharp lower bound of the metric form $c^{-2}g_0$ on the unit sphere over all $x$. Then $1/M$ is the sharp upper bound on $c^2g^{-1}$ and $|\xi|_g\le B/M$ which is sharp. Then  
\[
\Sigma_h(\Lambda f)  \le \{ (\tau,\eta)\in \R\times T^*\bo; \; |\eta|<|\tau|\le B/M\}
\]
\begin{wrapfigure}[11]{r} {0.46\textwidth}
\begin{center}
	\includegraphics[page=2 ]{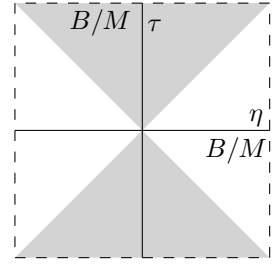}
\end{center}
\caption{\small The frequency set of $\Lambda f$.}
\label{fig_TAT0}
\end{wrapfigure}
and the r.h.s.\ is actually the range of $\Sigma_h (\Lambda f)$ for all $f$ as above, see Figure~\ref{fig_TAT0}. If we sample 
 on a grid on $[0,T]\times\bo$, with the second variable in a fixed coordinate chart, we need to choose steps $\Delta t<\pi M h/B$ and $|\Delta y^j|<\pi Mh/B$, where the latter norm is in the induced metric. Since $\Delta y^j$ is constant (the superscript $j$ refers to the $j$-th coordinate), for the Euclidean length $\Delta y^j$ we must have $\Delta y^j<\pi M'Mh/B$, where $(M')^2$ is the sharp upper bound on the induced metric on the Euclidean sphere in that chart. In our numerical example below, the boundary is piecewise flat parameterized in an Euclidean way; then $M'=1$ away from the corners. 
 
Set $c_\text{max}=\max c$. If $g$ is Euclidean,  $M=1/c_\text{max}$. The metric on $\R\times\bo$ is $\d t^2 +g'$, where $g'$ is the Euclidean metric restricted to $T\bo$. The sampling requirements in any local coordinates on the boundary depend in those coordinates as explained above, with $M'=1$.  Therefore, the sampling rate in the $(t,y)$ coordinates should be smaller than $\pi h/B c_\textrm{max}$. 

It is interesting that the sampling requirements do not depend on existence of conjugate points or not and are unaffected by possible presence of caustics. In fact, we can have caustics even if the geometry is Euclidean but we start from a concave wave front. 
\begin{figure}[h!] 
  \centering
	\includegraphics[trim = 0 0 0 0 , clip,height=.12\textheight]{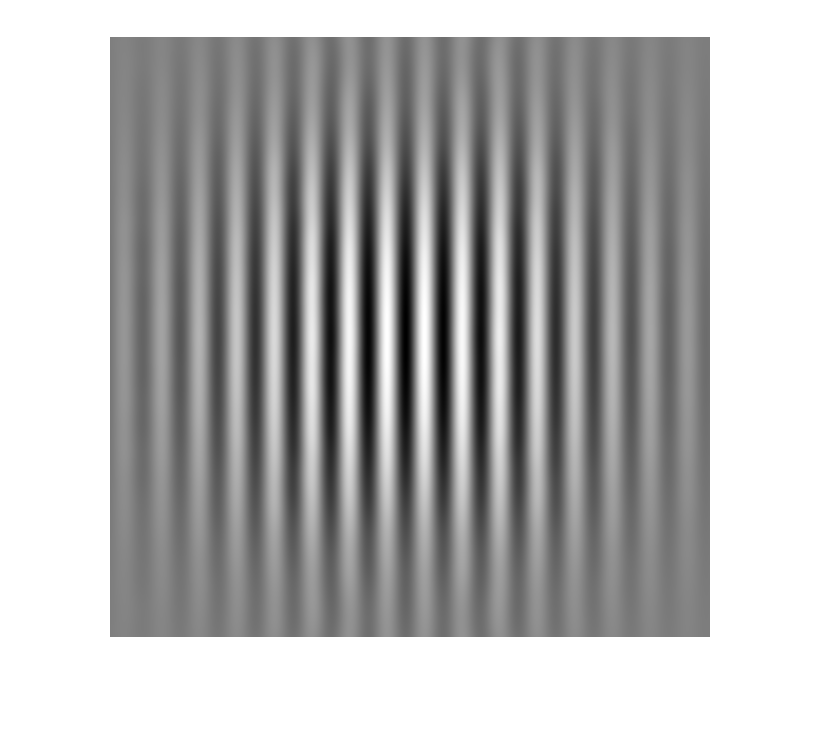}\ \ \ \ \ 
   \includegraphics[trim = 0 0 0 0 , clip, height=.12\textheight]{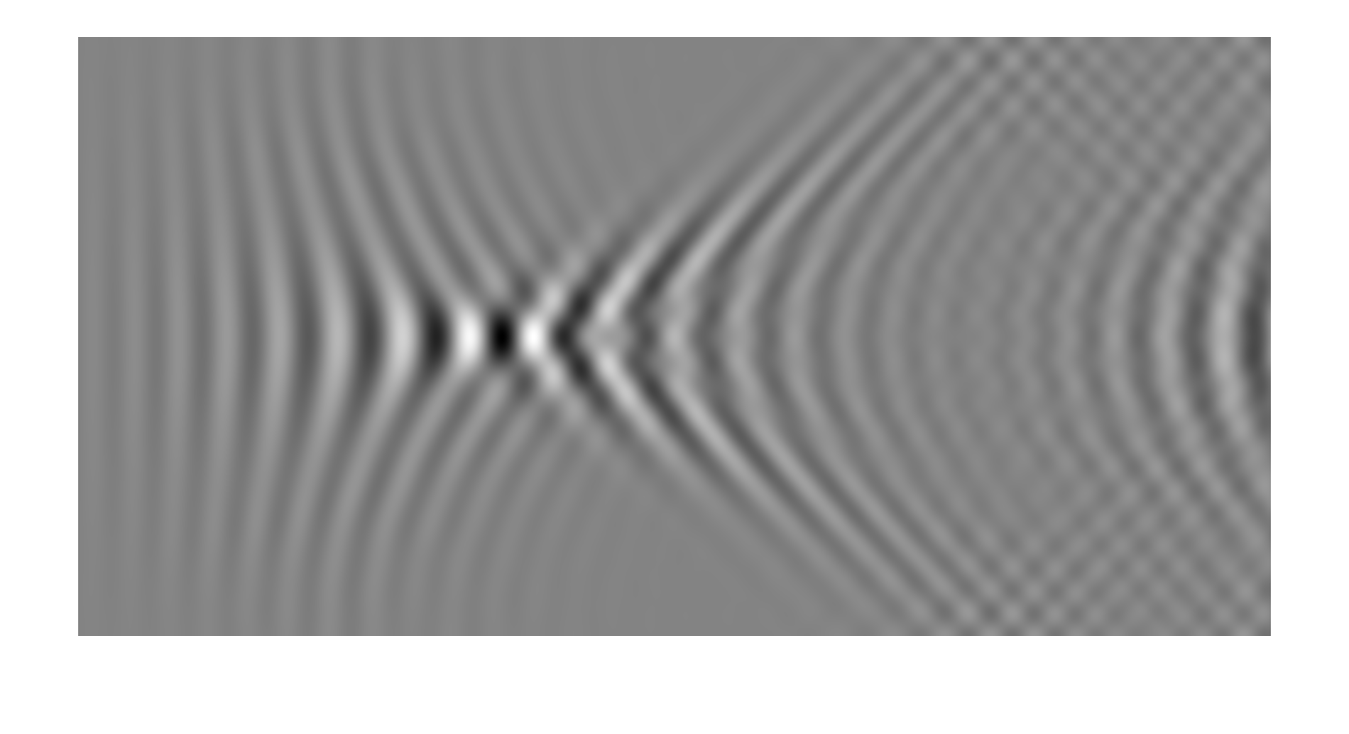}\ \ \ \ 
   \includegraphics[trim = 0 3.2 0 0 , clip, height=.129\textheight]{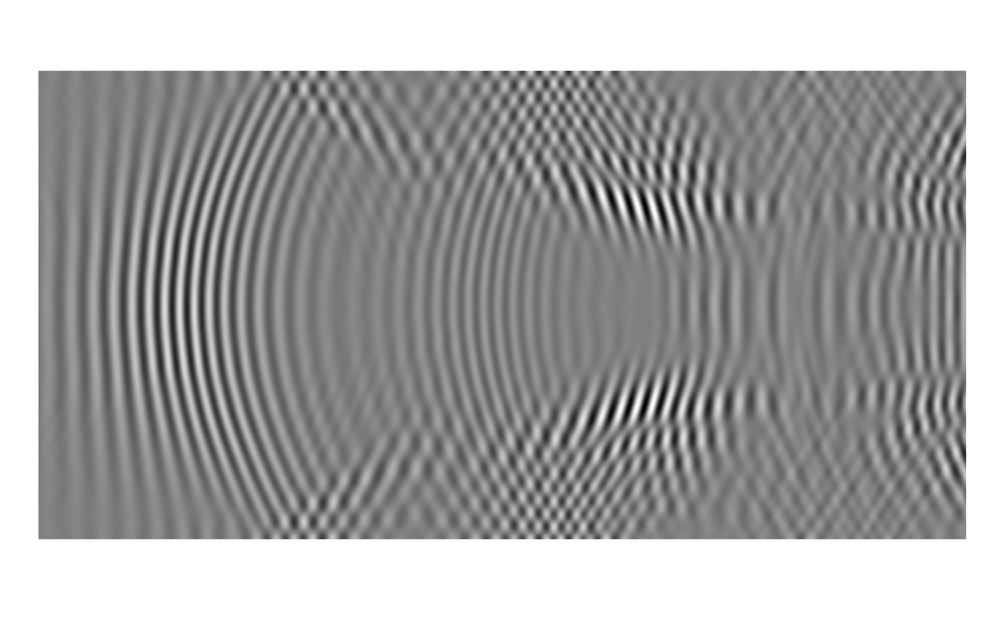}
\caption{\small Left: $f$ having $\WFH(f)$ along horizontal directions, on the $[-1,1]^2$ square.  Center: $\Lambda f$ on the right hand side cross time for $0\le t\le 4$ with a speed with  a slow region in the center. Despite the presence of caustics,  $\Lambda f$ does not contain higher frequencies there. Right: $\Lambda f$ with a speed having a fast region in the center.}
\label{TAT_pic1}
\end{figure}
In Figure~\ref{TAT_pic1} on the left, we plot  $f= e^{-2|x|^2}\sin((x-0.3)/0.02)$ in the square $[-1,1]^2$ computed with a high enough resolution. On the right, we plot $\Lambda f$ for the second model \r{TAT2} on the right hand side of the square cross the time interval $[0,4]$. The speed is $c=1-0.5\exp(-2|x|^2)$ having a slow region in the center and range $0.5\le c\le 1-e^{-2}/2\approx 0.93$. 
Then $M\sim 1$, and as noticed above, $M'=1$. The sampling requirements of $\Lambda f$ on $[0,4]\times\bo$ are therefore  the same as those of $f$ on $[-1,1]^2$.  
 Figure~\ref{TAT_pic1} demonstrates that fact by showing that the highest frequencies of $\Lambda f$ in the center are approximately the same as the highest ones on the left. Naturally, they occur where the rays hit $\bo$ at the largest  angle with the normal which is represented by the slanted curves on the plot of $\Lambda f$.  Next, despite of presence of caustics a bit left of the center of the $\Lambda f$ plot,  the oscillations are not of higher frequencies than elsewhere else. On the right, we plot $\Lambda f$  when $c=1+0.5\exp(-2|x|^2)$, i.e., there is a fast region in the middle. The speed range is approximately $[1.07,1.5]$. There are higher frequencies than in the previous case and higher than in $f$. The sampling requirements are higher. 

A more thorough analysis of this case in the context of this paper will be presented elsewhere.

\end{document}